\def\cB{\mathcal B}
\def\cI{\mathcal I}
\def\cK{\mathcal K}
\def\cM{\mathcal M}
\def\cO{\mathcal O}
\def\cQ{\mathcal Q}
\def\cW{\mathcal W}
\def\N{\mathop{\mathbb N\kern 0pt}\nolimits}
\def\Z{\mathop{\mathbb Z\kern 0pt}\nolimits}
\def\Q{\mathop{\mathbb Q\kern 0pt}\nolimits}
\def\R{\mathop{\mathbb R\kern 0pt}\nolimits}
\def\T{\mathop{\mathbb T\kern 0pt}\nolimits}
\def\SS{\mathop{\mathbb S\kern 0pt}\nolimits}
\def\ds{\displaystyle}
\def\al{\alpha}
\def\supp{\mathop{\rm supp}\nolimits}
\def\p{\partial}
\def\ve{\varepsilon}
\def\dive{\operatorname{div}}
\def\supp{\operatorname{supp}}
\def\ls{\lesssim}
\def\gt{\gtrsim}
\newcommand{\w}[1]{\langle {#1} \rangle}
\theoremstyle{plain}
\newtheorem{theorem}{Theorem}[section]
\newtheorem{lemma}[theorem]{Lemma}
\theoremstyle{definition}
\newtheorem{remark}{Remark}[section]
\numberwithin{equation}{section}
\title{Global existence for 2-D wave maps equation in exterior domains}
\author{Hou Fei$^{1,*}$ \quad Yin Huicheng$^{2,*}$ \quad  Yuan Meng$^{3,}$
    \footnote{Hou Fei (\texttt{fhou$@$nju.edu.cn}), Yin Huicheng (\texttt{huicheng$@$nju.edu.cn}, \texttt{05407$@$njnu.edu.cn})
    and Yuan Meng (\texttt{ym$@$cczu.edu.cn}) are supported by the NSFC (No.12331007, No.12101304). In addition, Hou Fei and Yin Huicheng are supported by the National key research and development program of China (No.2020YFA0713803, No.2024YFA1013301).}\\
    [12pt]{\small 1. School of Mathematics, Nanjing University, Nanjing, 210093, China}\\
    {\small 2. School of Mathematical Sciences and Mathematical Institute,}\\
    {\small Nanjing Normal University, Nanjing, 210023, China}\\
    {\small 3. School of Computer Science and Artificial Intelligence, Aliyun School of Big Data,}\\
    {\small School of Software, Changzhou University, Changzhou, 213164, China}}
\begin{document}

\date{}
\maketitle
\thispagestyle{empty}

\begin{abstract}
In the paper [H. Kubo, Global existence for exterior problems of semilinear wave equations with the null condition in 2D,
Evol. Equ. Control Theory 2 (2013), no. 2, 319-335], for the 2-D semilinear wave equation system
$(\p_t^2-\Delta)v^I=Q^I(\p_tv, \nabla_xv)$ ($1\le I\le M$) in the exterior domain with Dirichlet boundary condition,
it is shown that the small data smooth solution $v=(v^1, \cdot\cdot\cdot, v^M)$ exists globally when the
cubic nonlinearities $Q^I(\p_tv, \nabla_xv)=O(|\p_tv|^3+|\nabla_xv|^3)$ satisfy
the null condition.
We now focus on the global Dirichelt boundary value problem of 2-D wave maps equation
with the form $\ds\Box u^I=\sum_{J,K,L=1}^MC_{IJKL}u^JQ_0(u^K,u^L)$ $(1\le I\le M)$ and $\ds Q_0(f,g)=\p_tf\p_tg-\sum_{j=1}^2\p_jf\p_jg$
in exterior domain.
By establishing some crucial classes of pointwise spacetime decay estimates for the small data
solution $u=(u^1, \cdot\cdot\cdot, u^M)$ and its derivatives,
the global existence of $u$ is shown.

\vskip 0.2 true cm

\noindent
\textbf{Keywords.} Global smooth solution,  wave maps equation, initial boundary value problem,

\qquad \quad ghost weight, Hardy inequality

\vskip 0.2 true cm
\noindent
\textbf{2020 Mathematical Subject Classification.}  35L05, 35L20, 35L70
\end{abstract}

\vskip 0.2 true cm

\addtocontents{toc}{\protect\thispagestyle{empty}}
\tableofcontents

\section{Introduction}

Let ($\mathcal{M}$, $g$) be a $M$-dimensional compact Riemannian manifold without
boundary and the map $v=(v^1,\dots, v^M): \R\times(\R^2\setminus\cO)\rightarrow\cM$.
Assume that  $\eta$ is the standard Minkowski metric on $\R^{1+2}$ which
can be represented by the matrix $\mathrm{diag}(-1,1,1)$ in rectangular coordinates.
Define the functional $\ds L(v)=\int_{\R\times(\R^2\setminus\cO)}\w{\eta^{\alpha\beta}\p_{\alpha}v,\p_{\beta}v}_{g}dtdx$,
where $\cO\subset \R^2$ is a compact obstacle.
A 2-D wave map $u=(u^1,\dots, u^M)$ is the critical point of the functional $\ds L(v)$.
When the local coordinates on $\mathcal{M}$ are introduced, the equations of $u^I~(1\leq I\leq M)$ can be written as
\begin{equation}\label{Tao;wavemap}
\Box u^I=\sum_{J,K=1}^2\Gamma^I_{JK}(u)\big(\p_t u^J\p_t u^K-\sum_{L=1}^2\p_{x_L} u^J\p_{x_L} u^K\big),
\end{equation}
where $\Gamma^I_{JK}(u)$'s are the Christoffel symbols of the metric $g$. For more detailed derivation of the wave maps
equation \eqref{Tao;wavemap}, one can see Chapter 2 of \cite{Shatah98}. With respect to the small data solution
$u$, through taking the Taylor expansion on $\Gamma^I_{JK}(u)$ under Riemann normal coordinates, \eqref{Tao;wavemap} is essentially equivalent to
\begin{equation}\label{WaveMap}
\Box u^I=\sum_{J,K,L=1}^MC_{IJKL}u^JQ_0(u^K,u^L),
\end{equation}
where $\ds Q_0(f,g)=\p_tf\p_tg-\sum_{j=1}^2\p_jf\p_jg$ and $C_{IJKL}$ are constants.

Instead of \eqref{WaveMap}, we now study the following initial boundary value
problem (which is abbreviated as IBVP) of the more general nonlinear wave equation system
\begin{equation}\label{NLW}
\left\{
\begin{aligned}
&\Box u^I=F^I(u,\p u,\p^2u)=\sum_{J,K,L=1}^M\sum_{\cQ\in\{Q_0,Q_{\alpha\beta}\}}C_{IJKL}u^J\cQ(u^K,u^L),\quad (t,x)\in(0,+\infty)\times\cK,\\
&u(t,x)=0,\hspace{6.2cm} (t,x)\in(0,+\infty)\times\p\cK,\\
&(u,\p_tu)(0,x)=(\ve u_0,\ve u_1)(x),\qquad\qquad\qquad\qquad\quad x\in\cK,
\end{aligned}
\right.
\end{equation}
where $x=(x_1,x_2)$, $\p=(\p_0,\p_1,\p_2)=(\p_t, \p_{x_1}, \p_{x_2})$, $u=(u^1, \cdot\cdot\cdot, u^M)$, $Q_{\alpha\beta}(f,g)=\p_{\alpha}f\p_{\beta}g-\p_{\beta}f\p_{\alpha}g$ for $\alpha,\beta=0,1,2$, $\cK=\R^2\setminus\cO$,
the obstacle $\cO\subset \R^2$ contains the origin and is star-shaped, and the boundary $\p\cK=\p\cO$ is smooth.
In addition, $u_i=(u_i^1, \cdot\cdot\cdot, u_i^M)$ for $i=0,1$,
$(u_0,u_1)\in C^\infty(\cK)$ and $\supp(u_0,u_1)\subset\{x\in\cK:|x|\le M_0\}$ with
some fixed constant $M_0>1$. Obviously, \eqref{WaveMap} is a special case of the equation system in \eqref{NLW}.
Our main result is
\begin{theorem}\label{thm1}
Suppose that $(u_0,u_1)$ satisfies the compatibility conditions up to $(2N+1)-$order ($N\ge 42$) with respect to
the initial boundary values of \eqref{NLW}. Then there is an $\ve_0>0$ such that for $\ve\le\ve_0$ and
\begin{equation}\label{initial:data}
\begin{split}
&\|u_0\|_{H^{2N+1}(\cK)}+\|u_1\|_{H^{2N}(\cK)}\le1,
\end{split}
\end{equation}
problem \eqref{NLW} admits a global solution
$u\in\bigcap\limits_{j=0}^{2N+1}C^{j}([0,\infty), H^{2N+1-j}(\cK))$.
Moreover, there is a constant $C>0$ such that
\addtocounter{equation}{1}
\begin{align}
\sum_{|a|\le N}|\p Z^au|&\le C\ve\w{x}^{-1/2}\w{t-|x|}^{-1},\tag{\theequation a}\label{thm1:decay:a}\\
\sum_{|a|\le N}\sum_{i=1}^2|\bar\p_iZ^au|&\le C\ve\w{x}^{-1/2}\w{t+|x|}^{0.001-1},\tag{\theequation b}\label{thm1:decay:b}\\
\sum_{|a|\le2N-29}|Z^au|&\le C\ve\w{t+|x|}^{0.001-1/2},\tag{\theequation c}\label{thm1:decay:c}
\end{align}
where $Z=\{\p,\Omega\}$ with $\Omega=x_1\p_2-x_2\p_1$, $\w{x}=\sqrt{1+|x|^2}$,  $\bar\p_i=\frac{x_i}{|x|}\p_t+\p_i$ ($i=1,2$)
are the good derivatives (tangent to the outgoing light cone $|x|=t$).
In addition, the following time decay estimate of local energy  holds
\begin{equation}\label{thm1:decay:LE}
\sum_{|a|\le2N-27}\|\p^au\|_{L^2(\cK_R)}\le C_R\ve(1+t)^{-1},
\end{equation}
where $R>1$ is any fixed constant, $\cK_R=\cK\cap\{x: |x|\le R\}$ and  $C_R>0$ is a constant
depending on $R$.
\end{theorem}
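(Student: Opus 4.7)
The overall strategy is a continuity/bootstrap argument. By local well-posedness and the compatibility assumptions the solution exists on some $[0,T^*)$; I would impose bootstrap assumptions matching \eqref{thm1:decay:a}--\eqref{thm1:decay:c} and \eqref{thm1:decay:LE} with the constant $C$ replaced by $2C$ and recover $C$ for $\varepsilon\le\varepsilon_0$ sufficiently small. The severe structural constraint is that the Lorentz boosts $L_i$ and the scaling $S$ do not preserve $\partial\mathcal{K}$, so only the translations $\partial$ and the rotation $\Omega$ (preserved because $\mathcal{O}$ is star-shaped around the origin) form the admissible vector field set $Z$. This precludes the standard Klainerman--Sobolev route and forces the argument to interleave ghost-weight energies, KSS-type local energy estimates, and weighted Sobolev/Hardy inequalities.

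For the energy step I would commute $Z^a$ through $\Box$ and the Dirichlet condition (both trivially preserved since $[Z,\Box]=0$ and $Z^a u\mid_{\partial\mathcal{K}}=0$), then perform the Alinhac ghost-weight estimate using the multiplier $e^{q(r-t)}\partial_t Z^a u$ with $q'(\sigma)=\w{\sigma}^{-1-\delta}$. The boundary flux vanishes because $\partial_t Z^a u=0$ on $\partial\mathcal{K}$, and the outcome is a weighted energy identity that additionally controls $\sum_i\|\w{t-r}^{-(1+\delta)/2}\bar\partial_i Z^a u\|_{L^2_{t,x}}^2$. Simultaneously I would run a Keel--Smith--Sogge multiplier $f(r)\partial_r$; star-shapedness of $\mathcal{O}$ yields the favorable sign on the obstacle boundary term, producing the local energy decay \eqref{thm1:decay:LE}. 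The null structure of $Q_0,Q_{\alpha\beta}$ is what makes the source manageable: each quadratic product exposes at least one good derivative $\bar\partial$, so the dangerous $\partial u\cdot\partial u$ pieces are recast and absorbed by the ghost-weight spacetime term.

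To extract pointwise bounds I would use the 2D weighted Sobolev embedding based only on $\partial$ and $\Omega$, namely $\w{x}^{1/2}|w(x)|\lesssim\sum_{|a|+|b|\le 2}\|\partial^a\Omega^b w\|_{L^2(\mathcal{K})}$, together with Hardy's inequality (available because of the Dirichlet data on $\partial\mathcal{K}$) and \eqref{thm1:decay:LE} to handle the interior region $|x|\lesssim 1$; this produces \eqref{thm1:decay:c}. The stronger $\w{t-|x|}^{-1}$ factor in \eqref{thm1:decay:a} and $\w{t+|x|}^{-1+0.001}$ factor in \eqref{thm1:decay:b} are the delicate pieces: I would integrate the ghost-weight spacetime estimate along outgoing characteristics, using the decomposition $\partial=\bar\partial-\omega\partial_t$ with $\omega=x/|x|$ and the identity $\partial_\theta=|x|^{-1}\Omega$ to convert angular regularity into pointwise decay, the $0.001$ loss absorbing the logarithmic deficit inherent to 2D. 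Finally, inserting \eqref{thm1:decay:a}--\eqref{thm1:decay:c} into the cubic source $uQ(u,u)$ gives a bound roughly $\varepsilon^3\w{x}^{-3/2}\w{t-r}^{-1}\w{t+r}^{-3/2+O(\delta)}$ which is integrable in spacetime, closing the energy bootstrap.

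The principal obstacle is the combination of the 2D borderline decay (the linear rate $t^{-1/2}$ is not integrable) with the absence of Lorentz/scaling fields. Every pointwise rate must be extracted from a weighted $L^2$ input via the ghost weight, KSS, and Hardy inputs, and one must exploit both the null structure of $Q_0,Q_{\alpha\beta}$ and the explicit factor of $u$ in the cubic nonlinearity: one factor of $u$ contributes $\w{t+r}^{-1/2+\delta}$ through \eqref{thm1:decay:c}, the null form extracts $\w{t+r}^{-1+\delta}$ through \eqref{thm1:decay:b}, and the remaining $\partial u$ delivers $\w{t-r}^{-1}$ through \eqref{thm1:decay:a}; the three barely combine to a convergent spacetime integral. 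Balancing the top-order count $|a|\le N$ in \eqref{thm1:decay:a}--\eqref{thm1:decay:b} against the lower-order $|a|\le 2N-29$ in \eqref{thm1:decay:c} through interpolation, and propagating \eqref{thm1:decay:LE} uniformly in time without a scaling field available to generate the $(1+t)^{-1}$ factor, is where most of the technical effort will lie.
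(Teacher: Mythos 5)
Your overall architecture (bootstrap on the three pointwise bounds, ghost-weight energy with multiplier $e^{q(r-t)}\p_tZ^au$, null structure to expose a good derivative, restriction to the admissible fields $\{\p,\Omega\}$) matches the paper, but three of your key mechanisms would fail as stated. First, the claim that $Z^au|_{\p\cK}=0$ and hence the boundary flux vanishes is false once $Z^a$ contains a spatial derivative or $\Omega$: the Dirichlet condition is preserved only by $\p_t$ (and $\Omega$ is not tangential to $\p\cK$ for a general star-shaped obstacle). This is precisely why the paper first closes the ghost-weight energy for $\p_t^ju$ only, recovers spatial derivatives through the elliptic estimate \eqref{ellip}, and then for general $Z^a$ must estimate a genuinely nonzero boundary term $\cB^a$ in \eqref{energy:A2} via the trace theorem and the local energy decay; your proposal has no substitute for this step. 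Second, your route to \eqref{thm1:decay:LE} via a Keel--Smith--Sogge multiplier $f(r)\p_r$ is exactly what the paper rules out (Remark \ref{Rem-7}): the KSS estimate relies on the strong Huygens principle and the relevant multipliers fail for $n=2$; moreover even in 3D a KSS bound controls a spacetime-integrated local energy with a logarithmic loss, not the pointwise-in-time rate $(1+t)^{-1}$ asserted in \eqref{thm1:decay:LE}.

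Third, the sharp rates $\w{t-|x|}^{-1}$ in \eqref{thm1:decay:a} and $\w{t+|x|}^{-1/2+0.001}$ in \eqref{thm1:decay:c} cannot be extracted from the 2D embedding $\w{x}^{1/2}|f|\ls\sum_{|a|\le2}\|Z^af\|_{L^2(\cK)}$ plus Hardy and the ghost weight: that embedding yields only spatial decay $\w{x}^{-1/2}$ times a (growing) energy, and gives no decay in $t$ for $|x|\sim t$. The paper's actual engine is a set of pointwise estimates for the \emph{linear} exterior Dirichlet problem (\eqref{pw:crit}, \eqref{pw:crit:dt'}, \eqref{InLW:pw}, \eqref{InLW:dpw}, imported from \cite{HouYinYuan24}), which convert weighted sup-norm and local $L^2$ bounds on the source $u^J\cQ(u^K,u^L)$ into the decay rates of \eqref{thm1:decay:a}--\eqref{thm1:decay:c} and \eqref{thm1:decay:LE}; the characteristic integration you allude to is used only for the good-derivative bound \eqref{thm1:decay:b} in the far region $|x|\ge1+t/2$. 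Without these linear pointwise estimates (or an equivalent representation-formula analysis), the loop from energy bounds back to the pointwise bootstrap assumptions does not close.
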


\begin{remark}\label{Rem-1}
{\it Consider the following nonlinear wave equation system
\begin{equation}\label{rmk:NLW}
\Box u^I=\sum_{J,K,L=1}^M\sum_{\alpha,\beta=0}^2Q^{\alpha\beta}_{IJKL}u^J\p_{\alpha}u^K\p_{\beta}u^L,
\end{equation}
where the null condition satisfies
\begin{equation}\label{rmk:null}
\sum_{\alpha,\beta=0}^2Q^{\alpha\beta}_{IJKL}\xi_{\alpha}\xi_{\beta}\equiv0\quad\text{for any $(\xi_0,\xi_1,\xi_2)\in\{\pm1\}\times\SS$
and $I, J, K, L=1,\cdots, M$.}
\end{equation}
In terms of \cite{Klainerman}, one knows from \eqref{rmk:null} that there are some constants $C^0_{IJKL}$ and $C^{\alpha\beta}_{IJKL}$ such that
\begin{equation*}
\begin{split}\sum_{J,K,L=1}^M\sum_{\alpha,\beta=0}^2Q^{\alpha\beta}_{IJKL}u^J\p_{\alpha}u^K\p_{\beta}u^L
&=\sum_{J,K,L=1}^MC^0_{IJKL}u^JQ_0(u^K,u^L)+\sum_{J,K,L=1}^M\sum_{0\le\alpha<\beta\le2}C^{\alpha\beta}_{IJKL}u^JQ_{\alpha\beta}(u^K,u^L).
\end{split}
\end{equation*}
Therefore, problem \eqref{NLW} is equivalent to \eqref{rmk:NLW} with \eqref{rmk:null} and the corresponding initial boundary value conditions.
}
\end{remark}

\begin{remark}\label{Rem-2}
{\it Collecting the arguments in this paper and in \cite{HouYinYuan24},
Theorem \ref{thm1} can be extended into the following initial boundary value
problem of the fully nonlinear wave equation system
\begin{equation*}\label{GWE}
\begin{split}
&\Box u^I=\sum_{J,K=1}^M\sum_{|a|,|b|\le1}C_{IJK}^{ab}Q_0(\p^au^J,\p^bu^K)
+\sum_{J,K,L=1}^M\sum_{|a|,|b|\le1}\sum_{\cQ\in\{Q_0,Q_{\alpha\beta}\}}C_{IJKL}^{ab}u^J\cQ(\p^au^K,\p^bu^L)\\
&\quad +\sum_{J,K,L=1}^M\Big\{\sum_{\alpha,\beta,\gamma,\delta=0}^2Q_{IJKL}^{\alpha\beta\gamma\delta}\p^2_{\alpha\beta}u^J\p_{\gamma}u^K\p_{\delta}u^L
+\sum_{\alpha,\beta,\gamma=0}^2S_{IJKL}^{\alpha\beta\gamma}\p_{\alpha}u^J\p_{\beta}u^K\p_{\gamma}u^L\\
\end{split}
\end{equation*}
\begin{equation}\label{GWE}
\begin{split}
&\quad+\sum_{\alpha,\beta,\gamma,\delta,\mu,\nu=0}^2F_{IJKL}^{\alpha\beta\gamma\delta\mu\nu}\p^2_{\alpha\beta}u^J\p^2_{\gamma\delta}u^K\p^2_{\mu\nu}u^L
+\sum_{\alpha,\beta,\gamma,\delta,\mu=0}^2F_{IJKL}^{\alpha\beta\gamma\delta\mu}\p^2_{\alpha\beta}u^J\p^2_{\gamma\delta}u^K\p_{\mu}u^L\Big\},
\end{split}
\end{equation}
where $C_{IJK}^{ab}, C_{IJKL}^{ab}, Q_{IJKL}^{\alpha\beta\gamma\delta}, S_{IJKL}^{\alpha\beta\gamma}, F_{IJKL}^{\alpha\beta\gamma\delta\mu\nu}$ and $F_{IJKL}^{\alpha\beta\gamma\delta\mu}$ are
constants, and the related null conditions and symmetric conditions hold
as in problem (1.1) and Remark 1.2 of  \cite{HouYinYuan24}.}
\end{remark}

\begin{remark}\label{Rem-3}
{\it Note that the compatibility conditions of $(u_0,u_1)$ in Theorem \ref{thm1} are necessary
in order to find smooth solution $u$ of \eqref{NLW}.
For readers' convenience, we now give the illustrations on the compatibility conditions: Set $J_ku=\{\p_x^\al u:0\le|\al|\le k\}$
and $\p_t^ku(0,x)=F_k(J_ku_0,J_{k-1}u_1)$ ($0\le k\le 2N$), where $F_k$ depends on the nonlinear forms in \eqref{NLW},
$J_ku_0$ and $J_{k-1}u_1$. The compatibility conditions for problem \eqref{NLW} up to $(2N+1)-$order mean
that all $F_k$ vanish on $\p\cK$ for $0\le k\le 2N$.}
\end{remark}

\begin{remark}\label{Rem-4}
{\it For the Cauchy problem of the nonlinear wave equations
\begin{equation}\label{QWE:ivp}
\left\{
\begin{aligned}
&\Box u=Q(\p u, \p^2u),\hspace{2.6cm} (t,x)\in(0,+\infty)\times\R^n,\\
&(u,\p_tu)(0,x)=(\ve u_0,\ve u_1)(x),\qquad x\in\R^n,
\end{aligned}
\right.
\end{equation}
where $(u_0,u_1)(x)\in C_0^{\infty}(\Bbb R^n)$, $\ve>0$ is sufficiently small,
and the nonlinearity $Q(\p u, \p^2u)=O(|\p u|^2+|\p^2u|^2)$ is at least quadratic in $(\p u,\p^2u)$,
so far there have been extensive and systematical results on the global existence or blowup of smooth solutions.
For examples, when $n\ge4$, \eqref{QWE:ivp} admits a global small data smooth solution
(see \cite{Hormander97book,Klainerman80,KP83,Li-T});
when $n=2,3$ and the related null conditions hold, the global existence of $u$ has
been established in \cite{Alinhac01a} and \cite{Christodoulou86,Klainerman}, respectively;
otherwise, when $n=2,3$ and the null conditions are violated, the solution $u$ can blow up in finite time
(see \cite{Alinhac99, Alinhac01b, Hol}). In addition, for the global existence or blowup of small data smooth solutions to
the initial value problems of nonlinear wave equation systems, there have been also a lot of results, one can be
referred to the recent papers \cite{Dong-1}, \cite{Dong-2}, \cite{LM}, \cite{Speck} and so on.}
\end{remark}

\begin{remark}\label{Rem-5}
{\it It is pointed out that there have been remarkable results
for the global existence of low regularity solutions to the $n$-dimensional wave
maps system \eqref{Tao;wavemap} in  $\R^{1+n}$
($n\ge 2$). For examples, when the initial data $(u, \p_tu)(0,x)$ is small in the homogeneous Besov space $\dot{B}^{2,1}_{n/2}(\Bbb R^n)\times\dot{B}^{2,1}_{n/2-1}(\Bbb R^n)$, the author in \cite{Tataru98,Tataru01} has proved the global
solution of \eqref{Tao;wavemap} in $C([0, \infty), \dot{B}^{2,1}_{n/2}(\Bbb R^n))\cap C^1([0, \infty), \dot{B}^{2,1}_{n/2-1}(\Bbb R^n))$;
when $(u, \p_t u)(0,x)$ is small in the critical Sobolev space $\dot{H}^{n/2}(\Bbb R^n)\times\dot{H}^{n/2-1}(\Bbb R^n)$,
 the global solution of \eqref{Tao;wavemap} in $C([0, \infty), \dot{H}^{n/2}(\Bbb R^n))
 \cap C^1([0, \infty), \dot{H}^{n/2-1}(\Bbb R^n))$
is established in \cite{TaoIMRN,TaoCMP}.}
\end{remark}

\begin{remark}\label{Rem-6}
{\it For the 2-D semilinear wave equations $\ds\Box u=Q(\p u)$ with the cubic
nonlinearity $Q(\p u)=O(|\p u|^3)$ satisfying the null condition,
the author in \cite{Kubo13} has established the global existence of small data smooth solution $u$ for the exterior problem with
both Dirichlet and Neumann boundary conditions.
In \cite{Kubo13},  the following estimates (see (4.2)  with $m=0$ in \cite{Kubo14} and (57) with $\eta=0,\rho=1$  in \cite{Kubo13})
play crucial roles:
\begin{equation}\label{YHC0-5}
|w|\le C\sup_{(s,y)\in[0,t]\times\R^2}\w{s}^{1/2}|g(s,y)|
\end{equation}
and
\begin{equation}\label{Kubo13:dpw-0}
\w{x}^{1/2}\w{t-|x|}|\p w|\le C\ln(2+t+|x|)\sum_{|b|\le1}\sup_{(s,y)\in[0,t]\times\R^2}\w{s}|\p^bg(s,y)|,
\end{equation}
where $w$ solves the 2-D linear wave equation $\Box w=g(t,x)$ with $(w,\p_tw)|_{t=0}=(0,0)$.
Due to the lack of time decay of $w$ in \eqref{YHC0-5} and the appearance of the large factor $\ln(2+t+|x|)$
in \eqref{Kubo13:dpw-0}, the method in \cite{Kubo13} can not be applied to
our problem \eqref{NLW} since the cubic nonlinearity in \eqref{NLW}
contains the solution $u$ itself.
Thanks to some improved estimates in our previous paper \cite{HouYinYuan24},
we can show the crucial spacetime decay estimate \eqref{thm1:decay:c} in Theorem \ref{thm1},
which is one of the key points to solve the global exterior domain problem \eqref{NLW}.}
\end{remark}

\begin{remark}\label{Rem-7}
{\it Based on the following ``KSS estimate" for 3-D wave equation (see \cite[Prop 2.1]{KSS02jam} or (1.7)-(1.8)
on Page 190 of \cite{MetcalfeSogge06})
\begin{equation}\label{KSSestimate-01}
\begin{split}
&\quad(\ln(2+t))^{-1/2}\Big(\|\w{x}^{-1/2}\p v\|_{L_t^2L_x^2([0,t]\times\R^3)}+\|\w{x}^{-3/2}v\|_{L_t^2L_x^2([0,t]\times\R^3)}\Big)\\
&\ls\|\p v(0,x)\|_{L^2(\R^3)}+\int_0^t\|\Box v(s,x)\|_{L^2(\R^3)}ds,
\end{split}
\end{equation}
the authors in \cite{KSS02jam}-\cite{KSS04} obtain the almost global existence of small data solution
to 3-D quasilinear wave equation in exterior domain with Dirichlet boundary condition.
Note that the proof of the KSS estimate heavily relies on the strong Huygens principle.
However, the strong Huygens principle does not hold in two space dimensions.
Meanwhile, it is also pointed out that the multiplier used in \cite{MetcalfeSogge06} and \cite{Sterbenz05} fails when $n=2$
(see \cite[Page 485]{Hepd-Metc23}). We will establish the pointwise estimates \eqref{thm1:decay:a}-\eqref{thm1:decay:c} in Theorem \ref{thm1}
together with the energy estimates
instead of the KSS estimates to study the global solution problem of \eqref{NLW}.}
\end{remark}

We now give some reviews on the small data solution problem of second order wave equations in the exterior domains.
Let $u$ solve
\begin{equation}\label{QWE:ibvp}
\left\{
\begin{aligned}
&\Box u=Q(\p u, \p^2u),\hspace{2.6cm} (t,x)\in(0,+\infty)\times\cK,\\
&(u,\p_tu)(0,x)=(\ve u_0,\ve u_1)(x),\qquad x\in\cK,\\
&\text{The Dirichlet or Neumann boundary condition of $u$  on $\p\cK$,}
\end{aligned}
\right.
\end{equation}
where the nonlinearity $Q(\p u, \p^2u)=O(|\p u|^2+|\p^2u|^2)$ is at least quadratic in $(\p u,\p^2u)$.

When $n\ge6$, the authors in \cite{Shib-Tsuti86} show that \eqref{QWE:ibvp} has a global smooth small solution $u$
under the Dirichlet boundary value condition.
When $n=3$ and the related null condition holds, the global existence of small solution
to the Dirichlet boundary problem has been studied in \cite{Godin95,KSS00,KSS02jfa}.
When $n=3$ and the null condition fails, by making use of the well-known ``KSS estimate"
(see \cite[Prop 2.1]{KSS02jam}), the authors in \cite{KSS02jam,KSS04} obtain the almost global small data solution for the Dirichlet boundary problem of the semilinear and quasilinear wave equations, respectively.
In addition, when $n=3$, for the non-trapping obstacles and non-diagonal systems involving multiple wave speeds, the global existence of small solutions with Dirichlet
boundary condition has been established in \cite{MetcalfeSogge05}.
With respect to the Neumann boundary value problem in \eqref{QWE:ibvp}, the global small solution or almost
global solution are obtained for $n=3$ in \cite{Li-Yin18} or \cite{Yuan22}, respectively.

When $n=2$, for the semilinear wave equations $\ds\Box u=Q(\p u)$, if the cubic term $Q(\p u)=O(|\p u|^3)$ satisfies the null condition,
the author in \cite{Kubo13} has proved the global existence of small data smooth solutions for the exterior problems
with both Dirichlet and Neumann boundary conditions; if $\ds\Box u=Q(\p u)$ does not fulfill
the null condition, the almost global existence has been shown in \cite{KKL13,Kubo14}.
Recently,for $n=2$ and the quasilinear wave equation with quadratic $Q_0-$type null form,
in \cite{HouYinYuan24} we have established the global existence of small solution when
the Dirichlet boundary condition is imposed.

On the other hand, when $n=4$, for the initial boundary value problem of $\Box u=F(u,\p u,\p^2u)$
with the nonlinearity $F(u,\p u,\p^2u)$ depending on
both $u$ itself and its derivatives,
the authors in \cite{ZhaZhou15} prove the almost global existence of small data solutions to
the 4-D quasilinear wave equations outside of a star-shaped obstacle. In particular,
for $F(u,\p u,\p^2u)$ excluding the $u^2-$type nonlinearity, the
4-D quasilinear wave equation systems admit global small data smooth solutions (see \cite{MetcalfeMorgan20}).

\vskip 0.1 true cm

To the best of our knowledge, so far there are no global existence results on the small data smooth
solutions of 2-D wave maps equations in exterior domains.

Let us give some comments on the proof of Theorem \ref{thm1}.
Note that for the 2-D quasilinear wave equation with the quadratic $Q_0-$type null form
in the exterior domain, through looking for some good unknowns to transform
such a quadratically quasilinear wave equation into certain manageable cubic nonlinear form
and by deriving some crucial pointwise estimates for
the initial boundary value problem of 2-D linear wave equations in exterior domain,
we have established the global existence of small data solution in \cite{HouYinYuan24}.
However, since the wave maps equation in \eqref{WaveMap} includes the vector valued unknown function $u=(u^1,\dots,u^M)$,
it seems that the good unknowns introduced in \cite{HouYinYuan24} for the scalar quasilinear wave equation
are not applicable for problems \eqref{WaveMap} and \eqref{NLW}. To overcome this difficulty,
based on some precise estimates on the solution of the 2-D linear wave equation in \cite{HouYinYuan24},
we firstly  make use of the ghost weight energy method (the ghost weight is firstly introduced in \cite{Alinhac01a})
together with the Hardy inequality and elliptic estimates
to drive some rough pointwise estimates (see details in Sections \ref{sect3-1} and \ref{sect3-2} below).
More concretely, instead of the precise estimates \eqref{thm1:decay:a}-\eqref{thm1:decay:c} and \eqref{thm1:decay:LE},
we will establish the following weaker estimates (see Lemmas \ref{lem:4-1}-\ref{lem:4-4})
\begin{equation}\label{intro:decay}
\begin{split}
\sum_{|a|\le2N-33}|\p Z^au|&\le C\ve\w{x}^{-1/2}\w{t-|x|}^{-1}(1+t)^{29\ve_2},\quad\ve_2:=10^{-3},\\
\sum_{|a|\le2N-17}\sum_{i=1}^2|\bar\p_iZ^au|&\le C\ve\w{x}^{-1/2}\w{t+|x|}^{10\ve_2-1},\quad |x|\ge1+t/2,\\
\sum_{|a|\le2N-15}|Z^au|&\le C\ve\w{t+|x|}^{7\ve_2-1/2}\w{t-|x|}^{-\ve_2},\\
\sum_{|a|\le2N-13}\|\p^au\|_{L^2(\cK_R)}&\le C_R\ve(1+t)^{5\ve_2-1}.
\end{split}
\end{equation}
Secondly, in terms of \eqref{intro:decay},  we derive the crucial estimates \eqref{thm1:decay:a}-\eqref{thm1:decay:c}
and \eqref{thm1:decay:LE} by some more delicate energy methods (see Lemmas \ref{lem:4-5}-\ref{lem:4-8}).
Subsequently, the proof of Theorem \ref{thm1} is completed by the continuity argument.

\vskip 0.1 true cm

\noindent \textbf{Notations:}
\begin{itemize}
  \item $\cK:=\R^2\setminus\cO$, $\cK_R:=\cK\cap\{|x|\le R\}$, where $R>1$ is a fixed constant which may be
  changed in different places.
 \item  For the convenience and without loss of generality,
$\p\cK\subset\{x: c_0<|x|<1/2\}$ is assumed, where $c_0>0$ is some positive constant.
  \item The cutoff function $\chi_{[a,b]}(s)\in C^\infty(\R)$ with $0<a<b$, $0\le\chi_{[a,b]}(s)\le1$ and
  \begin{equation*}
    \chi_{[a,b]}(s)=\left\{
    \begin{aligned}
    0,\qquad |x|\le a,\\
    1,\qquad |x|\ge b.
    \end{aligned}
    \right.
  \end{equation*}
  \item $\w{x}:=\sqrt{1+|x|^2}$.
  \item $\N_0:=\{0,1,2,\cdots\}$.
  \item $\p_0:=\p_t$, $\p_1:=\p_{x_1}$, $\p_2:=\p_{x_2}$, $\p_x:=\nabla_x=(\p_1,\p_2)$, $\p:=(\p_t,\p_1,\p_2)$.
  \item For $|x|>0$, define $\bar\p_i:=\p_i+\frac{x_i}{|x|}\p_t$ ($i=1,2$) and $\bar\p=(\bar\p_1,\bar\p_2)$.
  \item $\Omega:=\Omega_{12}:=x_1\p_2-x_2\p_1$,  $Z=\{Z_1,Z_2,Z_3,Z_4\}:=\{\p_t,\p_1,\p_2,\Omega\}$, $\tilde Z:=\chi_{[1/2,1]}(x)Z$.
  \item $\p_x^a:=\p_1^{a_1}\p_2^{a_2}$ for $a\in\N_0^2$, $\p^a:=\p_t^{a_1}\p_1^{a_2}\p_2^{a_3}$ for $a\in\N_0^3$
      and $Z^a:=Z_1^{a_1}Z_2^{a_2}Z_2^{a_3}Z_4^{a_4}$ for $a\in\N_0^4$.
  \item For the quantities $f,g\ge0$, $f\ls g$ or $g\gt f$ denotes $f\le Cg$ for a generic constant $C>0$
  independent of $\ve$. In addition, $f\approx g$ means $f\ls g$ and $g\ls f$.
  \item $|\bar\p f|:=\sqrt{|\bar\p_1f|^2+|\bar\p_2f|^2}$.
  \item $\cW_{\mu,\nu}(t,x)=\w{t+|x|}^\mu(\min\{\w{x},\w{t-|x|}\})^\nu$ for $\mu, \nu\in\Bbb R$.
  \item For vector valued function $u=(u^1,\cdots,u^M)$, $\|u\|:=\sqrt{\|u^1\|^2+\cdots+\|u^M\|^2}$ with norm $\|\cdot\|=|\cdot|,\|\cdot\|_{L^2(\cK)},\|\cdot\|_{L^2(\cK_R)},\|\cdot\|_{H^k(\cK)}$.
\end{itemize}

\vskip 0.1 true cm

The paper is organized as follows.
In Section \ref{sect2}, some preliminaries and the bootstrap assumptions are given.
The required energy estimates are achieved by the ghost weight technique and the Hardy inequality
in Section \ref{sect3}.
In Section \ref{sect4}, some rough pointwise estimates established in Section \ref{sect3}
are improved and further the bootstrap assumptions are shown.
From this, the proof of Theorem \ref{thm1} can be completed.

\section{Preliminaries and bootstrap assumptions}\label{sect2}

\subsection{Null condition, Hardy inequality, elliptic estimate and Sobolev embedding}\label{sect2-1}

\begin{lemma}\label{lem:eqn:high}
Let $u$ be a smooth solution of \eqref{NLW}.
Then for any multi-index $a$, $Z^au$ satisfies
\begin{equation}\label{eqn:high}
\begin{split}
\Box Z^au^I&=\sum_{J,K,L=1}^MC_{IJKL}Z^a(u^J\cQ(u^K,u^L))\\
&=\sum_{b+c+d\le a}\sum_{J,K,L=1}^M\sum_{\cQ\in\{Q_0,Q_{\alpha\beta}\}}C_{IJKL}^{abcd}Z^bu^J\cQ(Z^cu^K,Z^du^L),
\end{split}
\end{equation}
where $C_{IJKL}^{abcd}$ are constants.
\end{lemma}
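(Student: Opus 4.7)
The plan is to prove \eqref{eqn:high} by induction on $|a|$, built on two algebraic facts: the vector fields in $Z$ commute with $\Box$, and $Z$ preserves the class of null forms $\{Q_0,Q_{\alpha\beta}\}$ modulo Leibniz-type corrections. Once these are in place the identity follows by repeatedly applying the derivation property to $Z^a(u^J\cQ(u^K,u^L))$.

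First I would record $[\Box,\p_\alpha]=0$ for $\alpha=0,1,2$, which is trivial, together with $[\Box,\Omega]=0$, which is a short direct computation using $\Omega=x_1\p_2-x_2\p_1$ and the rotational invariance of $\p_1^2+\p_2^2$. This yields $\Box Z^a u^I=Z^a\Box u^I$, so combining with \eqref{NLW} one reduces the statement to expanding $Z^a\bigl(u^J\cQ(u^K,u^L)\bigr)$ as a linear combination of terms of the asserted shape.

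Next I would establish the key commutation identity for null forms: for any $Z_i\in Z$ and any $\cQ\in\{Q_0,Q_{\alpha\beta}\}$,
\[
Z_i\bigl(\cQ(f,g)\bigr)=\cQ(Z_if,g)+\cQ(f,Z_ig)+\sum_{\cQ'\in\{Q_0,Q_{\alpha\beta}\}}c^{i}_{\cQ,\cQ'}\,\cQ'(f,g),
\]
with constant coefficients $c^{i}_{\cQ,\cQ'}$. When $Z_i=\p_\gamma$ the error terms vanish by the ordinary Leibniz rule. When $Z_i=\Omega$, using $[\Omega,\p_t]=0$, $[\Omega,\p_1]=-\p_2$, $[\Omega,\p_2]=\p_1$, one checks by a short direct computation that $\Omega Q_0(f,g)=Q_0(\Omega f,g)+Q_0(f,\Omega g)$ (no correction), while $\Omega Q_{\alpha\beta}$ produces a clean linear combination of the $Q_{\alpha'\beta'}$ applied to $(f,g)$ coming from the rotation of indices. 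In particular the space spanned by $\{Q_0,Q_{\alpha\beta}\}$ is stable under $Z$.

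With these ingredients, the inductive step is straightforward: writing $Z^a=Z_i Z^{a'}$ with $|a'|=|a|-1$, I apply $Z_i$ to the inductive expression $\sum C^{a'}_{IJKL}\,Z^{b}u^J\,\cQ(Z^{c}u^K,Z^{d}u^L)$, distribute $Z_i$ over the triple product via the Leibniz rule, and invoke the null-form identity above on the $\cQ$ factor; each resulting summand has the desired form with multi-indices satisfying $b'+c'+d'\le a$, and the null form $\cQ'$ still belongs to $\{Q_0,Q_{\alpha\beta}\}$. The only point requiring care is pure bookkeeping of the multi-index distribution across the three factors; no geometric or analytic difficulty is expected since everything is purely algebraic and pointwise. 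The main (mild) obstacle is verifying the rotation identity for $Q_{\alpha\beta}$ explicitly so as to exhibit the constants $c^i_{\cQ,\cQ'}$, but this is a finite computation with the four symbols $Q_{01},Q_{02},Q_{12}$.
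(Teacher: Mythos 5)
Your proposal is correct: the commutators $[\Box,\p_\alpha]=0$, $[\Box,\Omega]=0$, the Leibniz rule, and the stability of $\operatorname{span}\{Q_0,Q_{\alpha\beta}\}$ under $\Omega$ (via $[\Omega,\p_1]=-\p_2$, $[\Omega,\p_2]=\p_1$) are exactly the standard ingredients, and the induction on $|a|$ closes as you describe. The paper gives no proof of its own, merely citing Lemma 6.6.5 of H\"ormander's book, and your argument is precisely the one behind that citation (the only blemish is the harmless slip of calling $Q_{01},Q_{02},Q_{12}$ ``four symbols'').
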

\begin{proof}
One can see Lemma 6.6.5 in \cite{Hormander97book}.
\end{proof}
\begin{lemma}[Null condition structure property]
For any smooth functions $f$ and $g$, it holds that
\begin{equation}\label{null:sturcture}
\sum_{\cQ\in\{Q_0,Q_{\alpha\beta}\}}|\cQ(f,g)|\ls|\bar\p f||\p g|+|\p f||\bar\p g|.
\end{equation}
\end{lemma}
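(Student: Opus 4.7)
The plan is purely algebraic: decompose each ordinary spatial derivative as $\p_i = \bar\p_i - \omega_i\p_t$ with $\omega_i = x_i/|x|$, substitute this into $Q_0$ and $Q_{\alpha\beta}$, and verify that after all cancellations every surviving term carries at least one factor of the form $\bar\p_i f$ or $\bar\p_i g$. Since $\p\cK\subset\{c_0<|x|<1/2\}$, the coefficients $\omega_i$ are bounded, so the estimate $|\omega_i|\le 1$ gives the $\ls$ at the end for free.

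For the standard null form $Q_0$, I would expand $\p_i f\,\p_i g = \bar\p_i f\,\bar\p_i g - \omega_i(\bar\p_i f\,\p_t g + \p_t f\,\bar\p_i g) + \omega_i^2\p_t f\,\p_t g$, sum over $i=1,2$, and use the Euclidean identity $\omega_1^2+\omega_2^2=1$. The $\omega_i^2\p_tf\,\p_tg$ contributions then cancel exactly against the $\p_tf\,\p_tg$ in $Q_0$, leaving
\begin{equation*}
Q_0(f,g) = -\sum_{i=1}^2\bar\p_i f\,\bar\p_i g + \sum_{i=1}^2\omega_i\bigl(\bar\p_i f\,\p_t g + \p_t f\,\bar\p_i g\bigr),
\end{equation*}
from which \eqref{null:sturcture} is immediate.

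For the rotational null forms $Q_{\alpha\beta}$, I would split into two cases. When $\{\alpha,\beta\}=\{0,i\}$, a one-line substitution gives $Q_{0i}(f,g)=\p_t f\,\bar\p_i g - \bar\p_i f\,\p_t g$, since the two $\omega_i\p_tf\,\p_tg$ terms cancel by antisymmetry. When $\{\alpha,\beta\}=\{i,j\}$ with $i\ne j$, the same substitution produces six terms; again the $\omega_i\omega_j\p_tf\,\p_tg$ contributions cancel by antisymmetry, and each of the remaining terms contains exactly one $\bar\p$-factor.

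There is no real analytic obstacle here; the whole statement is an algebraic identity plus the triangle inequality. The only place requiring care is the sign bookkeeping in the $Q_0$ expansion and the verification that the two cancellations (via $|\omega|^2=1$ for $Q_0$ and via antisymmetry for $Q_{\alpha\beta}$) actually eliminate the pure $\p_t f\,\p_t g$ contribution, which is the one obstruction to a term carrying a good derivative.
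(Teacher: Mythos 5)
Your proof is correct and is essentially the argument the paper defers to (it cites Section~9.1 of Alinhac's book and Lemma~2.2 of \cite{HouYin20jde} rather than spelling it out): substitute $\p_i=\bar\p_i-\omega_i\p_t$ with $\omega=x/|x|$, expand, and observe that the pure $\p_tf\,\p_tg$ contributions cancel --- via $\omega_1^2+\omega_2^2=1$ for $Q_0$ and via antisymmetry for each $Q_{\alpha\beta}$ --- so every surviving term carries at least one good factor and the triangle inequality finishes with an absolute constant. One cosmetic point: the bound $|\omega_i|\le 1$ holds simply because $\omega$ is a unit vector and has nothing to do with $\p\cK\subset\{c_0<|x|<1/2\}$; the only geometric input needed is that $0\in\cO$, so $|x|>0$ throughout $\cK$ and $\omega$ (hence $\bar\p$) is everywhere defined.
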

\begin{proof}
Since the proof is analogous to that in Section 9.1 of \cite{Alinhac:book} and \cite[Lemma 2.2]{HouYin20jde}, we omit
the details here.
\end{proof}

\begin{lemma}[Elliptic estimate, Lemma 3.2 of \cite{Kubo13} or Lemma 3.1 of \cite{Kubo14}]
Let $w\in H^j(\cK)$ and $w|_{\cK}=0$ with integer $j\ge2$.
Then for any fixed constant $R>1$ and multi-index $a\in\N^2$ with $2\le|a|\le j$, one has
\begin{equation}\label{ellip}
\|\p_x^aw\|_{L^2(\cK)}\ls\|\Delta w\|_{H^{|a|-2}(\cK)}+\|w\|_{H^{|a|-1}(\cK_{R+1})}.
\end{equation}
\end{lemma}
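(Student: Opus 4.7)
The plan is to localize the estimate by introducing a smooth cutoff $\chi\in C_c^\infty(\R^2)$ with $\chi\equiv1$ on $\{|x|\le R\}$ and $\chi\equiv0$ on $\{|x|\ge R+1\}$, and to decompose $w=w_1+w_2$ with $w_1=\chi w$ and $w_2=(1-\chi)w$. Since $\p\cK\subset\{c_0<|x|<1/2\}\subset\{\chi=1\}$, the function $w_2$ vanishes in a neighborhood of $\p\cK$, and thus extends by zero to an $H^{|a|}(\R^2)$ function; meanwhile $w_1$ is supported in the bounded smooth domain $\cK_{R+1}$ and still satisfies $w_1|_{\p\cK}=0$. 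Thus each piece falls under a classical setting: either $\R^2$ without boundary, or a smooth bounded domain with zero Dirichlet data.

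For $w_2$, the idea is to use the Plancherel identity on $\R^2$, which yields the exact equality $\sum_{i,j}\|\p_i\p_j\phi\|_{L^2(\R^2)}^2=\|\Delta\phi\|_{L^2(\R^2)}^2$ for $\phi\in H^2(\R^2)$. Applied inductively to $\p_x^b w_2$ for multi-indices $|b|\le|a|-2$, this gives $\|\p_x^a w_2\|_{L^2(\R^2)}\ls\|\Delta w_2\|_{H^{|a|-2}(\R^2)}$. Expanding $\Delta w_2=(1-\chi)\Delta w-2\nabla\chi\cdot\nabla w-(\Delta\chi)w$ and using that $\nabla\chi,\Delta\chi$ are supported in $\cK_{R+1}\setminus\cK_R$, one controls the commutator contributions by $\|\Delta w\|_{H^{|a|-2}(\cK)}+\|w\|_{H^{|a|-1}(\cK_{R+1})}$.

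For $w_1$, I would invoke the standard elliptic regularity for the Dirichlet problem on the bounded smooth domain $\cK_{R+1}$ (as in Evans' PDE, Ch.~6.3), which yields $\|w_1\|_{H^{|a|}(\cK_{R+1})}\ls\|\Delta w_1\|_{H^{|a|-2}(\cK_{R+1})}+\|w_1\|_{L^2(\cK_{R+1})}$. This can be obtained by the usual procedure: straightening the boundary, using difference quotients for tangential regularity, and recovering normal derivatives algebraically from the equation. Again expanding $\Delta w_1$ through the Leibniz rule and absorbing the cutoff-derivative terms into the local lower-order norm, the bound becomes $\|\Delta w\|_{H^{|a|-2}(\cK)}+\|w\|_{H^{|a|-1}(\cK_{R+1})}$. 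Summing the two pieces completes the estimate.

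The main technical point, rather than an obstacle, is to bookkeep the commutator terms carefully so that at each order only $\|w\|_{H^{|a|-1}(\cK_{R+1})}$ appears on the right (the integer loss is exactly one derivative lower than the left-hand side, which is what makes the inequality useful in the bootstrap scheme of Section \ref{sect3}). Everything else reduces to textbook facts: Plancherel on $\R^2$ and Agmon–Douglis–Nirenberg type elliptic regularity on a smooth bounded domain. Since the exterior tail and the near-obstacle region are handled by entirely disjoint arguments joined by a smooth cutoff, the proof is conceptually routine once the decomposition is fixed.
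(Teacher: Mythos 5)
The paper does not supply its own proof of this lemma; it is quoted verbatim from Kubo's work (\cite{Kubo13}, \cite{Kubo14}), where the argument is precisely the cutoff decomposition you describe. Your proof is correct and complete, and it is the standard way to establish such an estimate on an exterior domain. The one point worth emphasizing is that the far-field piece must be handled by the Plancherel identity (or an equivalent argument giving $\|\p_x^a w_2\|_{L^2(\R^2)}\ls\|\Delta w_2\|_{H^{|a|-2}(\R^2)}$ with \emph{no} lower-order term), because otherwise the right-hand side would pick up a global $\|w\|_{L^2(\cK)}$ contribution rather than the localized $\|w\|_{H^{|a|-1}(\cK_{R+1})}$; you got this right, and the commutator terms from $\nabla\chi,\Delta\chi$ are supported in the annulus $R<|x|<R+1$, so they land inside $\cK_{R+1}$ as required. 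For the near piece, $w_1=\chi w$ vanishes on $\p\cK$ (since $w$ does) and in a neighborhood of $\{|x|=R+1\}$ (since $\chi$ does), so the global Dirichlet elliptic regularity on the bounded smooth domain $\cK_{R+1}$ applies and the Leibniz commutators are again absorbed into the local norm. Minor remarks: the hypothesis in the statement should read $w|_{\p\cK}=0$ (a typo in the source), which you correctly interpret; and the implied constant depends on $R$, which is fine since $R$ is fixed wherever the lemma is invoked.
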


\begin{lemma} [Hardy inequality, Lemma 1.2 of \cite{Lind}]
For any smooth function $f(t,x)$ on $\Bbb R_+\times \R^2$, it holds that
\begin{equation}\label{hardy:ineq}
\Big\|\frac{f}{\w{t-|x|}}\Big\|_{L^2(\R^2)}\ls\|\nabla_x f\|_{L^2(\R^2)},
\end{equation}
provided that $\supp_x f\subset\{|x|\le t+M_0\}$.
\end{lemma}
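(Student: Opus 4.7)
The plan is to prove this by a divergence-theorem argument with a radial vector field whose divergence reproduces the target weight. Concretely, I would set
\[
H(r):=\int_0^r\frac{s\,ds}{\w{t-s}^2},\qquad F(x):=\frac{H(|x|)}{|x|^2}\,x,
\]
which extends smoothly through the origin since $H(r)=O(r^2)$ as $r\to 0^+$. Applying the 2-D identity $\dive(\phi(r)\hat r)=\phi'(r)+\phi(r)/r$ with $\phi(r)=H(r)/r$ gives $\dive F=H'(r)/r=\w{t-|x|}^{-2}$, which is precisely the weight appearing on the left-hand side.

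Next, the compact $x$-support of $f$ lets me integrate by parts without boundary contribution to get
\[
\int_{\R^2}\frac{|f|^2}{\w{t-|x|}^2}\,dx=\int_{\R^2}\dive(F)\,|f|^2\,dx=-2\int_{\R^2}f\,(F\cdot\nabla_xf)\,dx.
\]
Since $|F(x)|=H(|x|)/|x|$, Cauchy-Schwarz then gives
\[
\int_{\R^2}\frac{|f|^2}{\w{t-|x|}^2}\,dx\ls\left(\int_{\R^2}\frac{|f|^2}{\w{t-|x|}^2}\,dx\right)^{1/2}\left(\int_{\R^2}\frac{H(|x|)^2\,\w{t-|x|}^2}{|x|^2}|\nabla_xf|^2\,dx\right)^{1/2},
\]
and absorbing one factor of the left-hand side reduces the whole claim to the pointwise estimate
\[
H(r)\le\frac{C\,r}{\w{t-r}}\qquad\text{for all }0\le r\le t+M_0,
\]
with a constant $C=C(M_0)$ independent of $t\ge 0$.

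This pointwise bound I would verify by a short case analysis. For $t\le 1$ the support is uniformly bounded, so the crude $H(r)\le r^2/2$ is already enough. For $t\ge 1$ and $r\le t/2$, the lower bound $\w{t-s}\gt\w{t}$ on the interval of integration gives $H(r)\ls r^2/\w{t}^2$, hence $H(r)\w{t-r}/r\ls r/\w{t}\le 1$. In the intermediate range $t/2\le r\le t-1$, I would split $H(r)$ at $s=t/2$ and substitute $u=t-s$ in the second piece to obtain $H(r)\ls t/(t-r)\approx r/\w{t-r}$. Finally, in the shell $t-1\le r\le t+M_0$ singled out by the support hypothesis, one has $\w{t-r}\ls 1$ while $H(r)\le H(t+M_0)\ls t+1\approx r/\w{t-r}$, closing the bound.

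The main obstacle, though entirely elementary, is keeping the constants uniform in $t\ge 0$ across these regimes while tracking the $M_0$-dependence; once the pointwise bound on $H$ is in hand, the divergence-plus-Cauchy-Schwarz argument closes immediately, with no Fourier or Besov input required.
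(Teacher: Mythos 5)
Your proof is correct: the radial vector field $F$ indeed satisfies $\dive F=\w{t-|x|}^{-2}$, the integration by parts has no boundary contribution, and the pointwise bound $H(r)\le Cr/\w{t-r}$ on the support holds in each of the regimes you list (with the constant depending only on $M_0$), so the absorption closes the estimate. The paper offers no proof of its own and simply cites Lemma 1.2 of Lindblad's paper, whose argument is precisely this weighted radial integration by parts, so your approach coincides with the cited one.
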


\begin{lemma}[Sobolev embedding, Lemma 3.1 of \cite{Kubo13} or Lemma 3.6 of \cite{Kubo14}]
Given any function $f(x)\in C_0^2(\overline{\cK})$, one has that for all $x\in\cK$,
\begin{equation}\label{Sobo:ineq}
\w{x}^{1/2}|f(x)|\ls\sum_{|a|\le2}\|Z^af\|_{L^2(\cK)}.
\end{equation}
\end{lemma}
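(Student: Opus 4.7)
The plan is to adapt the classical two-dimensional Klainerman--Sobolev inequality to the exterior domain setting by splitting into a bounded region near the obstacle and an unbounded region where polar coordinates are natural. The weight $\w{x}^{1/2}$ is harmless on compacts, so only the far region carries genuine content.

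First I would treat the region $\{x\in\cK:|x|\le 4\}$. Here $\w{x}^{1/2}$ is uniformly bounded. Since $\cO\subset B_{1/2}$ has smooth boundary, $\cK\cap B_4$ is a bounded Lipschitz domain, and the standard 2-D Sobolev embedding $H^2\hookrightarrow L^\infty$ gives
\[
\w{x}^{1/2}|f(x)|\ls\|f\|_{H^2(\cK\cap B_4)}\ls\sum_{|a|\le 2}\|\p_x^af\|_{L^2(\cK)},
\]
and Cartesian derivatives are among the $Z_i$'s.

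For the far region $\{|x|\ge 2\}$, I would pick a cutoff $\chi$ with $\chi\equiv 0$ on $|x|\le 1$ and $\chi\equiv 1$ on $|x|\ge 2$, set $g=\chi f$, and extend $g$ by zero across $\p\cK$; since $\chi$ vanishes near $\p\cK$ and $f$ is compactly supported, $g$ becomes a smooth compactly supported function on all of $\R^2$. Passing to polar coordinates $x=(r\cos\theta,r\sin\theta)$ with $dx=r\,dr\,d\theta$, the identity $r|g(r,\theta)|^2=-\int_r^\infty\p_s(s|g(s,\theta)|^2)\,ds$ combined with Cauchy--Schwarz (and $s\ge r\ge 2$) gives, after integrating in $\theta$,
\[
r\int_0^{2\pi}|g(r,\theta)|^2\,d\theta\ls\|g\|_{L^2(\R^2)}^2+\|g\|_{L^2(\R^2)}\|\p_r g\|_{L^2(\R^2)},
\]
where I have used $s\,ds\,d\theta=dx$. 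Applying the same inequality to $\Omega g=\p_\theta g$ in place of $g$, and then combining with the 1-D Sobolev estimate on the circle
\[
\sup_\theta|g(r,\theta)|^2\ls\int_0^{2\pi}\bigl(|g(r,\theta)|^2+|\Omega g(r,\theta)|^2\bigr)\,d\theta,
\]
I would obtain, after AM--GM,
\[
r|g(r,\theta)|^2\ls\sum_{|a|\le 1}\bigl(\|Z^ag\|_{L^2(\R^2)}^2+\|\p_rZ^ag\|_{L^2(\R^2)}^2\bigr)\ls\sum_{|a|\le 2}\|Z^af\|_{L^2(\cK)}^2,
\]
since $\p_r=\sum_i(x_i/|x|)\p_i$ is a bounded linear combination of Cartesian derivatives on $|x|\ge 1$ and $[Z,\chi]$ is supported in the bounded annulus $\{1\le|x|\le 2\}$.

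The main obstacle will be bookkeeping of the commutator errors from the cutoff $\chi$, but these are supported where $\w{x}^{1/2}$ is bounded and are absorbed by the near-region estimate. A minor simplification is that $\p_t\in Z$ annihilates the spatially-dependent $f$, so only spatial derivatives and the rotation $\Omega$ effectively contribute to the right-hand side.
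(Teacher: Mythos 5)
Your proof is correct. The paper itself gives no argument for this lemma — it is quoted verbatim from Kubo (Lemma 3.1 of the 2013 paper / Lemma 3.6 of the 2014 paper) — and your near/far splitting, with the radial integration identity $r|g|^2=-\int_r^\infty\p_s(s|g|^2)\,ds$ combined with the one-dimensional Sobolev inequality on circles, is precisely the standard mechanism behind that cited result; the cutoff commutators and the bounded region are handled exactly as you say.
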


\subsection{Two key lemmas}\label{sect2-2}

We now list two important lemmas established in \cite{HouYinYuan24}.
\begin{lemma} [Lemma 4.1 of \cite{HouYinYuan24}]
Suppose that the obstacle $\cO$ is star-shaped and $\cK=\R^2\setminus\cO$. Let $w$ be the solution of the IBVP
\begin{equation*}
\left\{
\begin{aligned}
&\Box w=F(t,x),\qquad(t,x)\in(0,\infty)\times\cK,\\
&w|_{\p\cK}=0,\\
&(w,\p_tw)(0,x)=(w_0,w_1)(x),\quad x\in\cK,
\end{aligned}
\right.
\end{equation*}
where $(w_0,w_1)$ has compact support and $\supp_x F(t,x)\subset \{|x|\le t+M_0\}$.
Then one has that for any $\mu,\nu\in(0,1/2)$ and $R>1$,
\begin{equation}\label{pw:crit}
\begin{split}
\w{t}\|w\|_{L^\infty(\cK_R)}&
\ls\|(w_0,w_1)\|_{H^2(\cK)}+\sum_{|a|\le1}\sup_{s\in[0,t]}\w{s}\|\p^aF(s,\cdot)\|_{L^2(\cK_3)}\\
&+\sum_{|a|\le1}\sup_{y\in\cK}|\p^aF(0,y)|
+\sum_{|a|\le2}\sup_{(s,y)\in[0,t]\times\cK}\w{y}^{1/2}\cW_{3/2+\mu,1+\nu}(s,y)|\p^aF(s,y)|
\end{split}
\end{equation}
and
\begin{equation}\label{pw:crit:dt}
\begin{split}
\frac{\w{t}}{\ln^2(2+t)}\|\p_tw\|_{L^\infty(\cK_R)}
\ls\|(w_0,w_1)\|_{H^3(\cK)}+\sum_{|a|\le2}\sup_{s\in[0,t]}\w{s}\|\p^aF(s,\cdot)\|_{L^2(\cK_3)}\\
+\sum_{|a|\le2}\sup_{y\in\cK}|\p^aF(0,y)|+\sum_{|a|\le3}\sup_{(s,y)\in[0,t]\times\cK}\w{y}^{1/2}\cW_{1,1}(s,y)|Z^aF(s,y)|.
\end{split}
\end{equation}
In addition, for $\mu,\nu\in(0,1/2)$,
\begin{equation}\label{pw:crit:dt'}
\begin{split}
\w{t}\|\p_tw\|_{L^\infty(\cK_R)}
&\ls\|(w_0,w_1)\|_{H^3(\cK)}+\sum_{|a|\le2}\sup_{s\in[0,t]}\w{s}\|\p^aF(s,\cdot)\|_{L^2(\cK_3)}\\
&+\sum_{|a|\le2}\sup_{y\in\cK}|\p^aF(0,y)|+\sum_{|a|\le3}\sup_{(s,y)\in[0,t]\times\cK}\w{y}^{1/2}\cW_{1+\mu+\nu,1}(s,y)|Z^aF(s,y)|.
\end{split}
\end{equation}
\end{lemma}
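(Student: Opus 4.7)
The plan is to split $w = w_h + w_F$, where $w_h$ solves the homogeneous IBVP with data $(w_0,w_1)$ and zero source, and $w_F$ solves the IBVP with zero data and source $F$, then estimate each piece separately. For $w_h$: since $(w_0,w_1)$ has compact support and the obstacle is star-shaped, I would invoke the classical 2D local energy decay outside a star-shaped obstacle (via the Morawetz multiplier $(t-r)\p_t + r\p_r$ together with the standard boundary term sign from the star-shaped condition, combined with Sobolev embedding on $\cK_R$). This yields $\|w_h\|_{L^\infty(\cK_R)}\lesssim \w{t}^{-1}\|(w_0,w_1)\|_{H^2(\cK)}$ and, with one additional derivative on the right, the corresponding bound for $\|\p_t w_h\|_{L^\infty(\cK_R)}$.

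For $w_F$, I would use Duhamel's formula $w_F(t,\cdot) = \int_0^t S(t-s)F(s,\cdot)\,ds$ and split $F = \chi_1 F + (1-\chi_1)F$ via a smooth cutoff $\chi_1$ supported in $\{|x|\le 3\}$. The near piece $\chi_1 F$ is controlled via local energy decay applied to a compactly supported source: integration by parts in $s$ produces the $\sup_s\w{s}\|\p^a F(s,\cdot)\|_{L^2(\cK_3)}$ terms together with the initial-value term $\sup_y|\p^a F(0,y)|$. The shift from $|a|\le 1$ in \eqref{pw:crit} to $|a|\le 2$ in \eqref{pw:crit:dt}, \eqref{pw:crit:dt'} reflects the extra $\p_t$ on the left.

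For the far piece $(1-\chi_1)F$: by finite propagation speed, reflected waves from the obstacle need an additional time of order $|x|$ to re-enter $\cK_R$, so one may replace the obstacle problem by the free-space 2D Cauchy problem with source $(1-\chi_1)F$ and apply the explicit Kirchhoff representation
\begin{equation*}
w_{\mathrm{free}}(t,x) = \frac{1}{2\pi}\int_0^t\int_{|x-y|\le t-s}\frac{F(s,y)}{\sqrt{(t-s)^2-|x-y|^2}}\,dy\,ds.
\end{equation*}
Substituting the assumed bound $|F(s,y)|\lesssim\w{y}^{-1/2}\cW_{3/2+\mu,1+\nu}(s,y)^{-1}$ and splitting the $(s,y)$ region according to the sign of $|y|-s$, the factor $\w{y}^{1/2}$ exactly absorbs the standard 2D loss coming from integration over $|x-y|\le t-s$, while $\cW_{3/2+\mu,1+\nu}$ provides integrability both in the radial direction at infinity and in the $||x|-t|$ direction near the light cone; with $|x|\le R$ fixed this yields the $\w{t}^{-1}$ decay required by \eqref{pw:crit}.

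The hard part is precisely this far-field analysis: 2D wave propagation lacks the strong Huygens principle, so the Duhamel integrand has full support inside the backward cone of volume $\sim t^3$, and only a careful choice of the weights $\w{y}^{1/2}\cW_{\mu,\nu}$ (with $\mu,\nu>0$ to avoid borderline logarithmic divergences) produces $\w{t}^{-1}$ after integration. The extra $\ln^2(2+t)$ loss in \eqref{pw:crit:dt} versus its absence in \eqref{pw:crit:dt'} arises because differentiating the Duhamel integral in $t$ yields, after integration by parts, a boundary contribution that generates a logarithm in time under the weight $\cW_{1,1}$; replacing this weight by $\cW_{1+\mu+\nu,1}$ in \eqref{pw:crit:dt'} supplies the extra $\w{t+|x|}^{\mu+\nu}$ decay that absorbs the log, at the cost of raising the regularity from $|a|\le 2$ on $\p^a F$ to $|a|\le 3$ on $Z^a F$ (the rotational vector field $\Omega$ being needed to recover tangential control near the light cone).
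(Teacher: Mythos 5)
This lemma is quoted verbatim from Lemma 4.1 of \cite{HouYinYuan24}; the present paper gives no proof of it, so your attempt can only be judged on its own terms and against the standard exterior-domain technology (Kubo \cite{Kubo13,Kubo14}, Keel--Smith--Sogge, and the cited preprint).

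Your overall architecture (homogeneous part via local energy decay for star-shaped obstacles, Duhamel part via a near/far splitting, free-space weighted estimates for the far part) is the right family of ideas, and your accounting of where the $\ln^2(2+t)$ in \eqref{pw:crit:dt} comes from versus its absence in \eqref{pw:crit:dt'} (the borderline weight $\cW_{1,1}$ versus the strictly integrable $\cW_{1+\mu+\nu,1}$) is plausible. However, there is a genuine gap in the far-field step. You split the \emph{source} as $F=\chi_1F+(1-\chi_1)F$ and assert that, ``by finite propagation speed, reflected waves from the obstacle need an additional time of order $|x|$ to re-enter $\cK_R$, so one may replace the obstacle problem by the free-space Cauchy problem'' for the far piece. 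This is false: the support of $(1-\chi_1)F$ begins at a fixed distance ($|y|\ge 2$, say) from the obstacle, so the waves it emits reach $\p\cK$ and are reflected back into $\cK_R$ after a time of order one, uniformly in $t$. Finite speed of propagation only delays the reflection; it does not remove it, and the difference between the exterior solution and $w_{\mathrm{free}}$ is a nontrivial boundary-driven wave that persists for all time. The standard repair is to cut off the \emph{solution} rather than the source: with $\chi$ vanishing near $\cO$ and equal to $1$ for $|x|\ge1$, the function $\chi w$ (extended by zero) solves the free-space equation $\Box(\chi w)=\chi F-2\nabla\chi\cdot\nabla w-(\Delta\chi)w$ on all of $\R^2$, the commutator terms are compactly supported and are fed back through the local energy decay of $w$ itself (this is exactly why the compactly supported norms $\sup_s\w{s}\|\p^aF(s,\cdot)\|_{L^2(\cK_3)}$ appear on the right-hand side of \eqref{pw:crit}--\eqref{pw:crit:dt'}), and only then does one apply the free-space representation formula with the weights $\w{y}^{1/2}\cW_{\mu,\nu}$. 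Without this commutator mechanism your argument does not close, because the reflected wave generated by the far source is never estimated.
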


\begin{lemma}[Lemma 4.2 of \cite{HouYinYuan24}]
Suppose that the obstacle $\cO$ is star-shaped and $\cK=\R^2\setminus\cO$.
Let $w$ solve
\begin{equation*}
\left\{
\begin{aligned}
&\Box w=F(t,x),\qquad(t,x)\in(0,\infty)\times\cK,\\
&w|_{\p\cK}=0,\\
&(w,\p_tw)(0,x)=(w_0,w_1)(x),\quad x\in\cK,
\end{aligned}
\right.
\end{equation*}
where $(w_0,w_1)$ has compact support and $\supp_x F(t,x)\subset\{|x|\le t+M_0\}$.
Then it holds that for any $\mu,\nu\in(0,1/2)$,
\begin{equation}\label{InLW:pw}
\begin{split}
&\frac{\w{t+|x|}^{1/2}\w{t-|x|}^{\mu}}{\ln^2(2+t+|x|)}|w|\\
&\ls\|(w_0,w_1)\|_{H^5(\cK)}+\sum_{|a|\le4}\sup_{y\in\cK}|\p^aF(0,y)|
+\sum_{|a|\le4}\sup_{s\in[0,t]}\w{s}^{1/2+\mu}\|\p^aF(s,\cdot)\|_{L^2(\cK_4)}\\
&\quad+\sum_{|a|\le5}\sup_{(s,y)\in[0,t]\times(\overline{\R^2\setminus\cK_2})}\w{y}^{1/2}\cW_{1+\mu,1+\nu}(s,y)|\p^aF(s,y)|
\end{split}
\end{equation}
and
\begin{equation}\label{InLW:dpw}
\begin{split}
&\w{x}^{1/2}\w{t-|x|}|\p w|\\
&\ls\|(w_0,w_1)\|_{H^9(\cK)}+\sum_{|a|\le9}\sup_{y\in\cK}|\p^aF(0,y)|
+\sum_{|a|\le8}\sup_{s\in[0,t]}\w{s}\|\p^aF(s,\cdot)\|_{L^2(\cK_4)}\\
&\quad+\sum_{|a|\le9}\sup_{(s,y)\in[0,t]\times(\overline{\R^2\setminus\cK_2})}\w{y}^{1/2}\cW_{3/2+\mu,1+\nu}(s,y)|Z^aF(s,y)|.
\end{split}
\end{equation}
\end{lemma}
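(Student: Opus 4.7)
The approach is to decompose $w$ into a near-field piece that sees the obstacle and a far-field piece that effectively solves a free 2-D wave equation on $\R^{1+2}$. Fix a cutoff $\eta(x):=1-\chi_{[2,3]}(x)$, so that $\eta\equiv 1$ near $\p\cK$ and $\eta\equiv 0$ for $|x|\ge 3$, and write $w=\eta w+(1-\eta)w$. The far-field piece $(1-\eta)w$, extended by zero to $\R^2$, satisfies
\begin{equation*}
\Box\big((1-\eta)w\big)=(1-\eta)F+[\Box,1-\eta]w \quad \text{on } \R^{1+2},
\end{equation*}
with vanishing initial data since $\p\cK\subset\{|x|<1/2\}$ and $1-\eta$ vanishes there. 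The commutator $[\Box,1-\eta]w=-(\Delta\eta)w-2\nabla\eta\cdot\nabla w$ is supported in the compact annulus $\{2\le|x|\le 3\}$, so it and its derivatives are controlled in $L^\infty_tL^2_x(\cK_3)$ by the pointwise and $L^2$ bounds on $w$ and $\p w$ already furnished by \eqref{pw:crit}--\eqref{pw:crit:dt'}. The near-field piece $\eta w$ is supported in $\cK_3$ and is itself handled directly by the same previous lemma. Thus it suffices to establish \eqref{InLW:pw}--\eqref{InLW:dpw} for the free Cauchy problem $\Box W=G$, $(W,\p_tW)|_{t=0}=0$, with a source $G$ inheriting the weighted-sup hypotheses on $F$.

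For the free problem I would invoke the 2-D Duhamel/Kirchhoff representation
\begin{equation*}
W(t,x)=\f{1}{2\pi}\int_0^t\int_{|y-x|\le t-s}\f{G(s,y)}{\sqrt{(t-s)^2-|y-x|^2}}\,dy\,ds,
\end{equation*}
and split the $(s,y)$ integration into the three regimes $|y|\le t/4$, $t/4\le|y|\le 4t$, and $|y|\ge 4t$. On each regime the hypothesis $\w{y}^{1/2}\cW_{1+\mu,1+\nu}(s,y)|\p^a G(s,y)|\in L^\infty$ converts, via standard polar coordinate manipulations adapted to the light cone $|y|\approx s$, into the target decay $\w{t+|x|}^{-1/2}\w{t-|x|}^{-\mu}$. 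The $\ln^2(2+t+|x|)$ factor in \eqref{InLW:pw} arises precisely because the 2-D Kirchhoff kernel is supported on a ball rather than on a sphere (failure of strong Huygens), and the resulting logarithmic divergence at the wavefront $|y-x|=t-s$ is exactly what the two $\ln$ powers absorb.

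For the derivative estimate \eqref{InLW:dpw} I would commute $\p$ through the decomposition and apply the same scheme to $\p W$. The decisive point is that a tangential (good) derivative $\bar\p_i$ of the Kirchhoff kernel gains one additional power of $\w{t-|x|}^{-1}$, so $|\bar\p W|$ is bounded as in \eqref{InLW:pw} with $\mu$ replaced by $1+\mu$; the radial derivative $\p_rW$ is then recovered from the equation itself via the identity $\Box=-(\p_t-\p_r)(\p_t+\p_r)+r^{-1}\p_r+r^{-2}\Omega^2$, which expresses $\p_rW$ as a combination of good derivatives, rotations, and the source $G$. Assembling the two yields the weight $\w{x}^{1/2}\w{t-|x|}$ on the left-hand side, and the extra orders of regularity in the hypotheses of \eqref{InLW:dpw} are precisely those consumed by the commutations with $Z$ needed to obtain the good-derivative gain.

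The principal obstacle is the absence of the strong Huygens principle in two dimensions, which forces the Kirchhoff kernel to be a volume rather than surface integral; this is what produces the $\ln^2(2+t+|x|)$ loss in \eqref{InLW:pw} and limits the exponent to $\mu<1/2$. A secondary technical point is verifying that the commutator $[\Box,1-\eta]w$ meets the sharp $L^2(\cK_3)$ time weight $\w{s}^{1/2+\mu}$ required to feed it back into the Kirchhoff estimate without loss, which in turn uses the star-shapedness of $\cO$ through the local energy decay already encoded in \eqref{pw:crit}--\eqref{pw:crit:dt'}.
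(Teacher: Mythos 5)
First, note that the paper does not actually prove this lemma: it is quoted verbatim as Lemma 4.2 of \cite{HouYinYuan24} and used as a black box, so the only available comparison is with the proof in that companion paper (and in \cite{Kubo13,Kubo14}), which does follow the scheme you describe. Your architecture — split $w$ by a spatial cutoff, control the near piece and the commutator $[\Box,1-\eta]w$ through the local decay estimates \eqref{pw:crit}--\eqref{pw:crit:dt'} (this is exactly why the right-hand sides of \eqref{InLW:pw}--\eqref{InLW:dpw} carry the weaker weight $\w{s}^{1/2+\mu}$, resp.\ $\w{s}$, on the $L^2(\cK_4)$ piece of the source), and reduce the far piece to weighted estimates for the free 2-D Duhamel integral — is the correct and standard one.

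Two gaps are worth naming. The first is a concrete error: the extended piece $(1-\eta)w$ does \emph{not} have vanishing initial data. The cutoff $1-\eta$ vanishes only for $|x|\le2$, whereas $(w_0,w_1)$ is merely compactly supported (up to $|x|\le M_0$ with $M_0>1$ arbitrary), so you must also estimate the homogeneous free-space evolution of $((1-\eta)w_0,(1-\eta)w_1)$; this is what the norms $\|(w_0,w_1)\|_{H^5(\cK)}$ and $\|(w_0,w_1)\|_{H^9(\cK)}$ on the right-hand sides are there to absorb. The second is one of substance: essentially all of the difficulty of the lemma lives in the two weighted free-space estimates that you dispatch as ``standard polar coordinate manipulations,'' namely that a source dominated by $\w{y}^{-1/2}\cW_{1+\mu,1+\nu}(s,y)^{-1}$ yields $\ln^2(2+t+|x|)\,\w{t+|x|}^{-1/2}\w{t-|x|}^{-\mu}$ decay, together with the corresponding gradient bound. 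Moreover, your route to the gradient bound is not correct as stated: the identity $\Box=(\p_t-\p_r)(\p_t+\p_r)-r^{-1}\p_r-r^{-2}\Omega^2$ (note the sign, which is opposite to what you wrote) does not algebraically express $\p_rW$ through good derivatives, rotations and $G$, because $r^{-1}\p_rW$ reappears on the right-hand side. The actual mechanism is to integrate $(\p_t-\p_r)\bigl[\p_+(r^{1/2}W)\bigr]=r^{1/2}\Box W+r^{-3/2}(W/4+\Omega^2W)$ along ingoing characteristics, exactly as in \eqref{BA2:improv1}; this is where the weights $\w{x}^{1/2}\w{t-|x|}$ in \eqref{InLW:dpw} come from and why its hypotheses involve $Z^aF$ (rotations included) up to order $9$. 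As a strategy the proposal would succeed; as written it asserts rather than proves the core estimates and misstates the key identity for the derivative bound.
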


\subsection{Bootstrap assumptions}\label{sect2-3}

We make the following bootstrap assumptions for the solution $u$ of problem \eqref{NLW}:
\begin{align}
&\sum_{|a|\le N}|\p Z^au|\le\ve_1\w{x}^{-1/2}\w{t-|x|}^{-1},\label{BA1}\\
&\sum_{|a|\le N}|\bar\p Z^au|\le\ve_1\w{x}^{-1/2}\w{t+|x|}^{\ve_2-1},\label{BA2}\\
&\sum_{|a|\le N}|Z^au|\le\ve_1\w{t+|x|}^{\ve_2-1/2},\label{BA3}
\end{align}
where $\ve_2:=10^{-3}$ and $\ve_1\in(\ve,1)$ will be determined later.

Note that due to $\supp(u_0,u_1)\subset\{x: |x|\le M_0\}$,
the solution $u$ of problem \eqref{NLW} is supported in $\{x\in\cK:|x|\le t+M_0\}$.

\section{Energy estimates}\label{sect3}

\subsection{Energy estimates on the derivatives of solution}\label{sect3-1}

\begin{lemma}
Under the assumptions of Theorem \ref{thm1}, let $u$ be the solution of \eqref{NLW} and suppose that \eqref{BA1}-\eqref{BA3} hold.
Then there is a positive constant $C_0$ such that
\begin{equation}\label{energy:time}
\sum_{j\le2N}\|\p\p_t^ju\|_{L^2(\cK)}\ls\ve(1+t)^{C_0\ve_1},
\end{equation}
where $\ve_1>0$ is small enough. Especially, one has
\begin{equation}\label{energy:time'}
\sum_{j\le2N}\|\p\p_t^ju\|_{L^2(\cK)}\ls\ve(1+t)^{\ve_2},
\end{equation}
where and below $\ve_2=10^{-3}$.
\end{lemma}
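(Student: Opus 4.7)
The plan is to run a ghost-weight energy estimate on the purely time-differentiated quantities $\partial_t^j u^I$ for $0\le j\le 2N$, which is the one family of high-order derivatives that preserves the Dirichlet condition (since $u|_{\p\cK}=0$ forces $\partial_t^j u|_{\p\cK}=0$). Commuting $\partial_t^j$ into \eqref{NLW} as in Lemma \ref{lem:eqn:high} yields
\begin{equation*}
\Box\partial_t^j u^I=\sum_{b+c+d\le j}\sum_{J,K,L}\sum_{\cQ\in\{Q_0,Q_{\alpha\beta}\}}C_{IJKL}^{bcd}\,\partial_t^b u^J\,\cQ(\partial_t^c u^K,\partial_t^d u^L),
\end{equation*}
with no boundary commutator because $\partial_t$ is tangent to $\p\cK$. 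I then multiply by $e^{q(r-t)}\partial_t\partial_t^j u^I$ with the Alinhac ghost weight $q(s)=\arctan s$ (or any bounded increasing $q$ with $q'(s)\gtrsim\w{s}^{-2}$) and integrate on $[0,t]\times\cK$. The boundary contribution on $\p\cK$ vanishes because $\partial_t^{j+1}u^I=0$ there; the bulk integration by parts produces the standard energy $\tfrac12\|\partial\partial_t^j u^I\|_{L^2(\cK)}^2$ plus the good-derivative ghost term $\int_0^t\!\!\int_\cK e^q q'(r-t)|\bar\partial\partial_t^j u^I|^2\,dyds$.

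The next step is to estimate the nonlinear right-hand side, which after applying \eqref{null:sturcture} is controlled by a sum of trilinear terms of the schematic form
\begin{equation*}
|\partial_t^b u^J|\bigl(|\bar\partial\partial_t^c u^K||\partial\partial_t^d u^L|+|\partial\partial_t^c u^K||\bar\partial\partial_t^d u^L|\bigr),\qquad b+c+d\le 2N.
\end{equation*}
I split into cases according to which of $b,c,d$ is ``high'' ($>N$); by pigeonhole at most one of them is. When all three indices are $\le N$, the bootstrap bounds \eqref{BA1}--\eqref{BA3} plug in directly and give a pointwise factor $\ls\ve_1^2\w{t+|x|}^{\ve_2-1}\w{x}^{-1/2}\w{t-|x|}^{-1}$ times $|\partial\partial_t^j u^I|$, integrable via Cauchy--Schwarz against the ghost term. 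When one index, say $b$, is high, I place the two factors with index $\le N$ in $L^\infty$ using \eqref{BA1}--\eqref{BA3} and keep $\partial_t^b u^J$ in $L^2$: here is where the Hardy inequality \eqref{hardy:ineq} (applicable because $\supp u\subset\{|x|\le t+M_0\}$) is essential, as the factor $u^J$ is undifferentiated and must be converted to $\|\w{t-|x|}^{-1}\partial_t^b u^J\|_{L^2}\ls\|\nabla\partial_t^b u^J\|_{L^2}$ before being reinserted into the energy. When the high index is on $\partial\partial_t^c u^K$ or $\partial\partial_t^d u^L$, the $L^2$ factor is already a first derivative and feeds directly back into the energy; the pairing of its good-derivative part with $\partial\partial_t^j u^I$ is absorbed into the ghost term.

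Collecting these bounds, the bootstrap assumptions extract a time factor $\ls\ve_1(1+s)^{-1}$ from the two low-index factors in each trilinear piece, so the energy inequality reads
\begin{equation*}
E_j(t)^2\ls\ve^2+C\ve_1\int_0^t\frac{E_{\le 2N}(s)^2}{1+s}\,ds,\qquad E_j(t):=\sum_{j\le 2N}\|\partial\partial_t^j u(t,\cdot)\|_{L^2(\cK)},
\end{equation*}
after summing over $j\le 2N$. Gronwall's lemma then delivers $E_{\le 2N}(t)\ls\ve(1+t)^{C_0\ve_1}$, which is \eqref{energy:time}; choosing $\ve_1$ so small that $C_0\ve_1\le\ve_2=10^{-3}$ upgrades this to \eqref{energy:time'}.

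The main obstacle I expect is the trilinear term in which the undifferentiated factor $u^J$ carries the highest-order $\partial_t$-derivatives. Estimating it na\"ively would lose a power of $t$ because $u$ itself decays only like $\w{t+|x|}^{\ve_2-1/2}$ by \eqref{BA3}; the way out is to combine the null structure (which forces a good derivative on one of $\partial_t^c u^K,\partial_t^d u^L$, giving the improved $\w{t+|x|}^{\ve_2-1}$ factor from \eqref{BA2}) with the Hardy inequality to absorb $u^J$ into the energy through the weight $\w{t-|x|}^{-1}$. Verifying that all remaining spacetime integrals of the form $\int_0^t(1+s)^{-1}E^2\,ds$ close requires carefully tracking the $\ve_1$ prefactor so that the Gronwall exponent remains $O(\ve_1)$ rather than $O(1)$.
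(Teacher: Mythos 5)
Your proposal is correct and follows essentially the same route as the paper: a ghost-weight energy identity for the pure time derivatives $\p_t^j u$ (whose traces vanish on $\p\cK$, so no boundary term), the null-structure bound \eqref{null:sturcture}, a case split on which factor carries the high index, and Gronwall. The one cosmetic difference is that where you invoke the Hardy inequality for the high-index undifferentiated factor, the paper simply notes that for pure time derivatives $\p_t^{|b|}u=\p_t(\p_t^{|b|-1}u)$ is already a full derivative controlled by the energy, reserving Hardy for the later estimate \eqref{energy:A} where $\Omega$-derivatives appear; both devices work here.
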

\begin{proof}
Multiplying \eqref{eqn:high} by $e^q\p_tZ^au^I$ with the ghost weight $q=q(|x|-t)$ and $q'(s)=\w{s}^{-1.1}$ derives
\begin{equation}\label{energy:time1}
\begin{split}
&\quad\;\frac12\p_t[e^q(|\p_tZ^au^I|^2+|\nabla Z^au^I|^2)]-\dive(e^q\p_tZ^au^I\nabla Z^au^I)
+\frac{e^q}{2\w{t-|x|}^{1.1}}|\bar\p Z^au^I|^2\\
&=\sum_{b+c+d\le a}\sum_{J,K,L=1}^MC_{IJKL}^{abcd}e^q\p_tZ^au^IZ^bu^J\cQ(Z^cu^K,Z^du^L),
\end{split}
\end{equation}
where and below the summation $\ds\sum_{\cQ\in\{Q_0,Q_{\alpha\beta}\}}$ is omitted for convenience.

Set $Z^a=\p_t^j$ with $j=|a|$.
Integrating \eqref{energy:time1} over $[0,t]\times\cK$
by use of the boundary condition $\p_t^lu|_{\p\cK}=0$ for any integer $l\ge0$ and summing up $I$ from $1$ to $M$ yield
\begin{equation}\label{energy:time2}
\begin{split}
&\quad\;\|\p Z^au(t)\|^2_{L^2(\cK)}+\int_0^t\int_{\cK}\frac{|\bar\p Z^au(s,x)|^2}{\w{s-|x|}^{1.1}}dxds\\
&\ls\|\p Z^au(0)\|^2_{L^2(\cK)}+\sum_{b+c+d\le a}\int_0^t\int_{\cK}|\p_tZ^au|\cI^{bcd}dxds,
\end{split}
\end{equation}
where
\begin{equation}\label{energy:time3}
\cI^{bcd}:=\sum_{K,L=1}^M|Z^bu||\cQ(Z^cu^K,Z^du^L)|.
\end{equation}
Note that although $Z^a=\p_t^j$ is taken in $\cI^{bcd}$, one can analogously treat $\cI^{bcd}$
for $Z\in\{\p_t,\p_1,\p_2,\Omega\}$.
It follows from \eqref{null:sturcture} that
\begin{equation}\label{energy:time4}
\cI^{bcd}\ls|Z^bu||\bar\p Z^cu||\p Z^du|+|Z^bu||\p Z^cu||\bar\p Z^du|.
\end{equation}
When $|b|\ge N+1$, $Z^bu=\p_tZ^{b'}u$ with $|b'|=|b|-1$ and $|c|,|d|\le N$ hold.
Thus, it is deduced from \eqref{BA1}, \eqref{BA2} and \eqref{energy:time4} that
\begin{equation}\label{energy:time5}
\sum_{\substack{b+c+d\le a,\\|b|\ge N}}\int_{\cK}|\p_tZ^au|\cI^{bcd}dx\ls\ve_1(1+s)^{-1.9}\sum_{|b'|\le|a|}\|\p Z^{b'}u(s)\|^2_{L^2(\cK)}.
\end{equation}
With respect to the first term on the right hand side of \eqref{energy:time4}, when $|b|\le N$ and $|c|\le N$, by \eqref{BA2} and \eqref{BA3},
we have
\begin{equation}\label{energy:time6}
\sum_{\substack{b+c+d\le a,\\|b|,|c|\le N}}\int_{\cK}|\p_tZ^au||Z^bu||\bar\p Z^cu||\p Z^du|dx
\ls\ve_1(1+s)^{-1.4}\sum_{|d|\le|a|}\|\p Z^du(s)\|^2_{L^2(\cK)};
\end{equation}
when $|b|\le N$ and $|c|\ge N$, one has $|d|\le N$, and then it follows from
\eqref{BA1}, \eqref{BA3} and the Young inequality that
\begin{equation}\label{energy:time7}
\begin{split}
&\quad\sum_{\substack{b+c+d\le a,\\|b|\le N,|c|\ge N}}\int\int_{\cK}|\p_tZ^au||Z^bu||\bar\p Z^cu||\p Z^du|dxds\\
&\ls\ve_1\int_0^t\frac{\|\p Z^au(s)\|^2_{L^2(\cK)}ds}{(1+s)^{1.8}}
+\ve_1\sum_{|c|\le|a|}\int_0^t\int_{\cK}\frac{|\bar\p Z^cu(s,x)|^2}{\w{s-|x|}^{1.1}}dxds.
\end{split}
\end{equation}
With respect to the second term on the right hand side of \eqref{energy:time4}, we can obtain the same estimates as
in \eqref{energy:time6} and \eqref{energy:time7}.
Collecting \eqref{energy:time2}-\eqref{energy:time7} with all $|a|\le2N$ leads to
\begin{equation*}
\begin{split}
&\quad\sum_{|a|\le2N}\Big\{\|\p Z^au(t)\|^2_{L^2(\cK)}+\int_0^t\int_{\cK}\frac{|\bar\p Z^au(s,x)|^2}{\w{s-|x|}^{1.1}}dxds\Big\}\\
&\ls\ve^2+\ve_1\sum_{|a|\le2N}\Big\{\int_0^t\frac{\|\p Z^au(s)\|^2_{L^2(\cK)}ds}{(1+s)^{1.4}}
+\int_0^t\int_{\cK}\frac{|\bar\p Z^au(s,x)|^2}{\w{s-|x|}^{1.1}}dxds\Big\}.
\end{split}
\end{equation*}
Therefore, by $Z^a=\p_t^j$, it holds that for sufficiently small $\ve_1>0$,
\begin{equation*}
\sum_{j\le2N}\|\p\p_t^ju(t)\|^2_{L^2(\cK)}\ls\ve^2+\sum_{j\le2N}\ve_1\int_0^t\frac{\|\p\p_t^ju(s)\|^2_{L^2(\cK)}ds}{(1+s)^{1.4}}.
\end{equation*}
This, together with the Gronwall's lemma, yields \eqref{energy:time} and \eqref{energy:time'}.
\end{proof}

\begin{lemma}
Under the assumptions of Theorem \ref{thm1}, let $u$ be the solution of \eqref{NLW} and suppose that \eqref{BA1}-\eqref{BA3} hold.
Then we have
\begin{equation}\label{energy:dtdx}
\sum_{|a|\le2N}\|\p\p^au\|_{L^2(\cK)}\ls\ve_1(1+t)^{\ve_2}.
\end{equation}
\end{lemma}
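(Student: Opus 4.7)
\emph{Approach.} My plan is to reduce the mixed-derivative $L^2$-estimate \eqref{energy:dtdx} to the pure-time-derivative estimate \eqref{energy:time'} by combining the elliptic estimate \eqref{ellip} with the identity $\Delta u^I=\p_t^2u^I-F^I$ coming from \eqref{NLW}. Writing $\p\p^au=\p^cu$ with $|c|=|a|+1\le 2N+1$, I would decompose $\p^c=\p_t^{c_0}\p_x^b$ with $c_0+|b|=|c|$ and perform a nested induction: an outer induction on the total order $|c|$, and, at each total order, an inner induction on the spatial-derivative count $|b|$.

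\emph{Elliptic reduction.} When $|b|\le 1$, $\|\p^cu\|_{L^2(\cK)}$ is directly bounded by \eqref{energy:time'}. When $|b|\ge 2$, since $\p_t$ is tangential to the stationary boundary, $w=\p_t^{c_0}u$ vanishes on $\p\cK$, and \eqref{ellip} yields
$$\|\p_x^b\p_t^{c_0}u\|_{L^2(\cK)}\ls\|\Delta\p_t^{c_0}u\|_{H^{|b|-2}(\cK)}+\|\p_t^{c_0}u\|_{H^{|b|-1}(\cK_{R+1})}.$$
Substituting $\Delta\p_t^{c_0}u=\p_t^{c_0+2}u-\p_t^{c_0}F$ trades two spatial derivatives for two time derivatives, so the first term on the right reduces at the same total order $|c|$ but with spatial count $\le|b|-2$, which closes the inner induction; the local term has total order $|c|-1$, strictly lower than $|c|$, and is handled by the outer induction.

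\emph{Nonlinear contribution.} Since $F^I\sim u\cdot\p u\cdot\p u$ is cubic and semilinear, Leibniz gives
$$\p^dF\sim\sum_{d_1+d_2+d_3=d}\p^{d_1}u\cdot\p^{d_2}\p u\cdot\p^{d_3}\p u,\qquad|d|\le|c|-2\le 2N-1.$$
The three factors have total order of derivatives at most $2N+1$, so by pigeonhole at most one of them exceeds order $N$. For the (at least two) remaining small factors I would apply \eqref{BA1} and \eqref{BA3} to get the pointwise bounds $|\p^{d_1}u|\ls\ve_1\w{t+|x|}^{\ve_2-1/2}\le\ve_1$ for $|d_1|\le N$ and $|\p^{d_i}\p u|\ls\ve_1\w{x}^{-1/2}\w{t-|x|}^{-1}\le\ve_1$ for $|d_i|\le N$. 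Pulling two such bounds out of the $L^2$ integral leaves $\|\p^dF\|_{L^2(\cK)}\ls\ve_1^2\|\p^{d_*}u\|_{L^2(\cK)}$ for some $|d_*|\le|c|-1$, which is controlled by the outer induction hypothesis; in the subcase where all three factors are of order $\le N$, integrating the product of the pointwise weights over $\cK$ gives $\|\p^dF\|_{L^2(\cK)}\ls\ve_1^3(1+t)^{\ve_2-1}$, well within the target.

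\emph{Main obstacle.} The chief difficulty is orchestrating the nested inductions so that every Leibniz-expanded term either matches a strictly lower-order induction hypothesis (in total order $|c|$ or in spatial count $|b|$) or carries a small $\ve_1$ prefactor sufficient to be absorbed into the target bound $\ve_1(1+t)^{\ve_2}$. A subsidiary technical check is that two bootstrap pointwise weights, when paired, are indeed $\le 1$ uniformly on $\cK$, so the remaining factor can be taken in $L^2$ without introducing any extra $t$-growth beyond $(1+t)^{\ve_2}$. Collecting the pure-time-derivative contribution from \eqref{energy:time'}, the nonlinear contribution, and the bounded-region contribution, and choosing $\ve_1$ sufficiently small, completes the proof of \eqref{energy:dtdx}.
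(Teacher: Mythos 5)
Your proposal is correct and follows essentially the same route as the paper: the paper likewise sets up an induction on the number of spatial derivatives (via the quantities $E_j(t)$), applies the elliptic estimate \eqref{ellip} to $\p_t^k u^I$, uses $\Delta=\p_t^2-\Box$ to trade two spatial derivatives for two time derivatives plus the nonlinearity, bounds the nonlinear term by $\ve_1 E_{j-1}(t)$ through the bootstrap pointwise bounds, and controls the residual local term $\|u^I\|_{L^2(\cK_{R+1})}$ by \eqref{BA3}, reducing everything to \eqref{energy:time'}. The only cosmetic difference is your nested induction on total order versus the paper's single induction on the spatial index $j$.
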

\begin{proof}
Set $\ds E_j(t):=\sum_{k=0}^{2N-j}\sum_{I=1}^M\|\p\p_t^ku^I(t)\|_{H^j(\cK)}$ with $0\le j\le2N$.
Then for $j\ge1$, one has
\begin{equation}\label{energy:dtdx1}
\begin{split}
E_j(t)&\ls\sum_{k=0}^{2N-j}\sum_{I=1}^M[\|\p\p_t^ku^I(t)\|_{L^2(\cK)}
+\sum_{1\le|a|\le j}(\|\p_t\p_t^k\p_x^au^I(t)\|_{L^2(\cK)}+\|\p_x\p_t^k\p_x^au^I(t)\|_{L^2(\cK)})]\\
&\ls E_0(t)+E_{j-1}(t)+\sum_{k=0}^{2N-j}\sum_{I=1}^M\sum_{2\le|a|\le j+1}\|\p_t^k\p_x^au^I(t)\|_{L^2(\cK)},
\end{split}
\end{equation}
where we have used
\begin{equation*}
\sum_{k=0}^{2N-j}\sum_{1\le|a|\le j}\|\p_t\p_t^k\p_x^au^I(t)\|_{L^2(\cK)}
\ls\sum_{k=0}^{2N-j}\sum_{|a'|\le j-1}\|\p_x\p_t^{k+1}\p_x^{a'}u^I(t)\|_{L^2(\cK)}
\ls E_{j-1}(t).
\end{equation*}
For the last term in \eqref{energy:dtdx1}, it follows from \eqref{ellip} that
\begin{equation}\label{energy:dtdx2}
\begin{split}
\|\p_t^k\p_x^au^I\|_{L^2(\cK)}&\ls\|\Delta_x\p_t^ku^I\|_{H^{|a|-2}(\cK)}+\|\p_t^ku^I\|_{H^{|a|-1}(\cK_{R+1})}\\
&\ls\|\p_t^{k+2}u^I\|_{H^{|a|-2}(\cK)}+\|\p_t^k\Box u^I\|_{H^{|a|-2}(\cK)}+\|\p_t^ku^I\|_{L^2(\cK_{R+1})}\\
&\quad+\sum_{1\le|b|\le|a|-1\le j}\|\p_t^k\p_x^bu^I\|_{L^2(\cK_{R+1})},
\end{split}
\end{equation}
where the fact of  $\Delta=\p_t^2-\Box$ is utilized.
In addition, by \eqref{NLW} and \eqref{BA3} with $|a|+k\le2N+1$ we have
\begin{equation}\label{energy:dtdx3}
\|\p_t^k\Box u^I\|_{H^{|a|-2}(\cK)}
\ls\sum_{l\le k}\sum_{J=1}^M\ve_1\|\p\p_t^lu^J\|_{H^{|a|-2}(\cK)}\ls\ve_1E_{j-1}(t).
\end{equation}
Substituting \eqref{energy:dtdx2}-\eqref{energy:dtdx3} into \eqref{energy:dtdx1} together
with \eqref{BA3} and \eqref{energy:time'} yields
\begin{equation*}
E_j(t)\ls E_0(t)+E_{j-1}(t)+\sum_{I=1}^M\|u^I\|_{L^2(\cK_{R+1})}\ls\ve(1+t)^{\ve_2}+E_{j-1}(t)+\ve_1.
\end{equation*}
This implies \eqref{energy:dtdx}.
\end{proof}

\subsection{Energy estimates on the vector field derivatives of solution}\label{sect3-2}

\begin{lemma}
Under the assumptions of Theorem \ref{thm1}, let $u$ be the solution of \eqref{NLW} and suppose that \eqref{BA1}-\eqref{BA3} hold.
Then one has
\begin{equation}\label{energy:A}
\sum_{|a|\le2N-1}\|\p Z^au\|_{L^2(\cK)}\ls\ve_1(1+t)^{\ve_2+1/2}.
\end{equation}
\end{lemma}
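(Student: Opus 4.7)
The approach adapts the ghost-weight energy argument of the proof of \eqref{energy:time} to admit general multi-indices $a$ involving the rotation $\Omega$ and the spatial translations $\p_j$. The new difficulty is that $Z^a u|_{\p\cK}$ need not vanish (indeed, $\Omega$ is not tangential to $\p\cK$, and only $\p_t^k u|_{\p\cK}=0$ follows from $u|_{\p\cK}=0$), so applying the ghost-weight identity \eqref{energy:time1} directly to $Z^a u$ produces an uncontrolled boundary integral. I would circumvent this by replacing $Z^a u$ with $\tilde Z^a u$, where $\tilde Z=\chi_{[1/2,1]}(x)Z$ is the cutoff field from the notations; since $\p\cK\subset\{|x|<1/2\}$, one has $\tilde Z^a u|_{\p\cK}=0$, so the boundary integral vanishes.

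Writing $\Box\tilde Z^a u^I=\tilde Z^a\Box u^I+[\Box,\tilde Z^a]u^I$, I would handle the nonlinear term $\tilde Z^a\Box u$ exactly as in \eqref{energy:time4}-\eqref{energy:time7}: expand via Lemma \ref{lem:eqn:high}, apply the null bound \eqref{null:sturcture}, and invoke the bootstrap \eqref{BA1}-\eqref{BA3} on the two low-order factors of each cubic term. For $|a|\le 2N-1$ and any decomposition $b+c+d\le a$, at most one of $|b|,|c|,|d|$ can exceed $N$, so two factors always enjoy the pointwise bootstrap estimates while the third is paired in $L^2$; this contributes either time-integrable pieces or pieces absorbed by the ghost-weight term on the left-hand side.

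The commutator $[\Box,\tilde Z^a]u$ is the delicate piece. Because $[\Box,\p_\alpha]=0$ and the Killing identity $[\Box,\Omega]=0$ both hold, every term in $[\Box,\tilde Z^a]u$ carries at least one derivative of $\chi_{[1/2,1]}$ and is therefore supported in the annulus $\{1/2\le|x|\le 1\}\subset\cK_1$. On $\cK_1$ the coefficients of $\Omega$ are bounded, so one has the pointwise comparisons $|[\Box,\tilde Z^a]u|\ls\sum_{|b|\le|a|+1}|\p^b u|$ and $|\p_t\tilde Z^a u|\ls\sum_{|b|\le|a|+1}|\p^b u|$ on $\cK_1$. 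Bounding each of $\|[\Box,\tilde Z^a]u\|_{L^2(\cK_1)}$ and $\|\p_t\tilde Z^a u\|_{L^2(\cK_1)}$ directly by $\|\p^{|a|+1}u\|_{L^2(\cK)}\ls\ve(1+s)^{\ve_2}$ via \eqref{energy:dtdx} (using $|a|+1\le 2N$), the commutator contribution to the energy identity is $\int_0^t\ve^2(1+s)^{2\ve_2}ds\ls\ve^2(1+t)^{1+2\ve_2}$. Combined with a Grönwall argument that absorbs the time-integrable nonlinear pieces, this yields $\|\p\tilde Z^a u(t)\|_{L^2(\cK)}\ls\ve_1(1+t)^{1/2+\ve_2}$. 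Transferring to $Z^a u$ is then direct: on $\cK\setminus\cK_1$ the operators $\tilde Z$ and $Z$ coincide, while on $\cK_1$ the coefficients of $\Omega$ are bounded so $\|\p Z^a u\|_{L^2(\cK_1)}\ls \sum_{|b|\le|a|+1}\|\p^b u\|_{L^2(\cK)}\ls\ve(1+t)^{\ve_2}$ by \eqref{energy:dtdx}.

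The main obstacle lies in the commutator step: one must be careful to estimate $\|\p_t\tilde Z^a u\|_{L^2(\cK_1)}$ directly via the already-established pure-derivative energy \eqref{energy:dtdx} rather than by the self-referential quantity $\|\p\tilde Z^a u\|_{L^2(\cK)}=\sqrt{E(t)}$ one is trying to bound. Using $\sqrt{E(t)}$ would give only $\sqrt{E(t)}\ls\ve(1+t)^{1+\ve_2}$ after Grönwall, which is strictly worse than the desired $(1+t)^{1/2+\ve_2}$; the decoupling is possible precisely because the commutator lives on the compact strip $\cK_1$ where $\Omega$-derivatives are comparable to $\p$-derivatives, so the previously proved \eqref{energy:dtdx} supplies $L^2(\cK_1)$ control at the decoupled rate $(1+s)^{\ve_2}$, and pairing two such rates yields exactly the $(1+t)^{1+2\ve_2}$ growth of $E(t)$.
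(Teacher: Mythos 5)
Your idea of replacing $Z^a u$ by the cutoff field $\tilde Z^a u$ so that the Dirichlet boundary term vanishes is a valid alternative to the paper's approach, which instead keeps $Z^a u$ and estimates the resulting boundary integral $\cB^a$ in \eqref{energy:A2}-\eqref{energy:A3} via the trace theorem. In both routes the near-boundary contribution (your commutator on $\cK_1$; the paper's $\cB^a$) reduces to the already-proved pure-derivative bound \eqref{energy:dtdx}, and your commutator analysis -- including the observation that $[\Box,\tilde Z^a]u$ has at most $|a|+1$ derivatives, lives in the annulus, and produces $\int_0^t\ve_1^2(1+s)^{2\ve_2}\,ds\ls\ve_1^2(1+t)^{1+2\ve_2}$ -- is sound (a minor slip: \eqref{energy:dtdx} gives $\ve_1$, not $\ve$). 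Transferring back to $Z^a u$ using \eqref{energy:dtdx} on $\cK_1$ is also fine.

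However, there is a genuine gap in your treatment of the nonlinear term. You propose to handle $\tilde Z^a\Box u$ ``exactly as in \eqref{energy:time4}--\eqref{energy:time7}.'' But the argument there for the case $|b|\ge N+1$, namely \eqref{energy:time5}, hinges on the specific form $Z^a=\p_t^j$: it writes $Z^bu=\p_t Z^{b'}u$ with $|b'|=|b|-1$, so the high-order undifferentiated factor is in fact a derivative and can be fed into the $L^2$ energy. For general multi-indices $a$ involving $\Omega$ this factorization fails, and one has to control a weighted $L^2$-norm of the zeroth-order quantity $Z^b u$ itself, which is not available from the bootstrap assumptions \eqref{BA1}--\eqref{BA3} (they only reach $|a|\le N$ pointwise) nor from \eqref{energy:dtdx} (which carries a derivative). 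The paper's proof introduces a new step precisely here: it uses $|Zf|\ls\w{x}|\p f|$ from \eqref{energy:A5} and, after splitting near/far in $|x|/t$ with the cutoff $\tilde\chi(|x|/(t+9))$, the Hardy inequality \eqref{hardy:ineq}, to arrive at the bound \eqref{energy:A6}
\[
\sum_{|b|\ge N+1}\|\cI^{bcd}\|_{L^2(\cK)}\ls\ve_1^2\w{t}^{\ve_2-2}\sum_{|b|\le|a|}\|\p Z^bu\|_{L^2(\cK)},
\]
which is then integrable in time and absorbable by Gr\"onwall. Your phrasing that ``the third [factor] is paired in $L^2$\dots\ contributes either time-integrable pieces or pieces absorbed by the ghost-weight term'' glosses over precisely this: without the Hardy inequality (or an equivalent mechanism to trade the missing derivative on $Z^bu$ for the weight $\w{x}^{-1}\w{t-|x|}^{-1}$) the $|b|\ge N+1$ contribution does not close. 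You should add this step explicitly to make the proposal complete.
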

\begin{proof}
Analogous to \eqref{energy:time2}, we have
\begin{equation}\label{energy:A1}
\begin{split}
&\quad\;\|\p Z^au(t)\|^2_{L^2(\cK)}+\int_0^t\int_{\cK}\frac{|\bar\p Z^au(s,x)|^2}{\w{s-|x|}^{1.1}}dxds\\
&\ls\ve^2+|\cB^a|+\sum_{b+c+d\le a}\int_0^t\int_{\cK}|\p_tZ^au|\cI^{bcd}dxds,
\end{split}
\end{equation}
where
\begin{equation}\label{energy:A2}
\cB^a:=\sum_{I=1}^M\int_0^t\int_{\p\cK}\nu(x)\cdot\nabla Z^au^I(s,x)\p_tZ^au^I(s,x)d\sigma ds
\end{equation}
with $\nu(x)=(\nu_1(x),\nu_2(x))$ being the unit outer normal of the boundary $\cK$ and $d\sigma$ being the curve measure on $\p\cK$.
By $\p\cK\subset\overline{\cK_1}$ and the trace theorem, one has
\begin{equation}\label{energy:A3}
\begin{split}
|\cB^a|&\ls\sum_{|b|\le|a|}\int_0^t\|(1-\chi_{[1,2]}(x))\p_t\p^bu\|_{L^2(\p\cK)}\|(1-\chi_{[1,2]}(x))\p_x\p^bu\|_{L^2(\p\cK)}ds\\
&\ls\sum_{|b|\le|a|}\int_0^t\|(1-\chi_{[1,2]}(x))\p \p^bu\|^2_{H^1(\cK)}ds\\
&\ls\sum_{|b|\le|a|+1}\int_0^t\|\p \p^bu\|^2_{L^2(\cK_2)}ds\ls\ve_1^2(1+t)^{2\ve_2+1},
\end{split}
\end{equation}
where \eqref{energy:dtdx} is used.

When $|b|\le N$ in $\cI^{bcd}$ given by \eqref{energy:time3}, analogously to the estimates of \eqref{energy:time6}
and \eqref{energy:time7}, we have
\begin{equation}\label{energy:A4}
\begin{split}
&\sum_{\substack{b+c+d\le a,\\|b|\le N}}\int_0^t\int_{\cK}|\p_tZ^au|\cI^{bcd}dxds
\ls\ve_1\sum_{|b|\le|a|}\int_0^t\int_{\cK}\frac{|\bar\p Z^bu(s,x)|^2}{\w{s-|x|}^{1.1}}dxds\\
&\qquad\qquad+\ve_1\sum_{|b|\le|a|}\int_0^t\frac{\|\p Z^bu(s)\|^2_{L^2(\cK)}ds}{(1+s)^{1.4}}.
\end{split}
\end{equation}
Next, we turn to the estimate of $\cI^{bcd}$ with $|b|\ge N+1$.
In this case, $|c|,|d|\le N$ hold.
Due to $Z\in\{\p,\Omega\}$, then for any function $f$,
\begin{equation}\label{energy:A5}
|Zf|\ls\w{x}|\p f|.
\end{equation}
Denote $\ds\tilde\chi(\frac{|x|}{t+9}):=\chi_{[1/3,1/2]}(\frac{|x|}{t+9})$.
It follows from \eqref{hardy:ineq}, \eqref{BA1}, \eqref{BA2}, \eqref{energy:time3} and \eqref{energy:A5} that
\begin{equation}\label{energy:A6}
\begin{split}
&\sum_{\substack{b+c+d\le a,\\|b|\ge N+1}}\|\cI^{bcd}\|_{L^2(\cK)}
\ls\sum_{1\le|b|\le|a|}\ve_1^2\w{t}^{\ve_2-1}\Big\|\frac{|Z^bu|}{\w{x}\w{t-|x|}}\Big\|_{L^2(\cK)}\\
&\ls\sum_{1\le|b|\le|a|}\ve_1^2\w{t}^{\ve_2-2}\Big(\Big\|\frac{\tilde\chi|Z^bu|}{\w{t-|x|}}\Big\|_{L^2(\R^2)}
+\Big\|\frac{(1-\tilde\chi)|Z^bu|}{\w{x}}\Big\|_{L^2(\cK)}\Big)\\
&\ls\ve_1^2\w{t}^{\ve_2-2}(\sum_{|b|\le|a|}\|\p Z^bu\|_{L^2(\cK)}+\sum_{1\le|b|\le|a|}\w{t}^{-1}\Big\|\tilde\chi'|Z^bu|\Big\|_{L^2(\cK)})\\
&\ls\sum_{|b|\le|a|}\ve_1^2\w{t}^{\ve_2-2}\|\p Z^bu\|_{L^2(\cK)}.
\end{split}
\end{equation}
Collecting \eqref{energy:A1}-\eqref{energy:A6} with all $|a|\le2N-1$ implies that
\begin{equation*}
\begin{split}
&\sum_{|a|\le2N-1}\Big\{\|\p Z^au(t)\|^2_{L^2(\cK)}+\int_0^t\int_{\cK}\frac{|\bar\p Z^au(s,x)|^2}{\w{s-|x|}^{1.1}}dxds\Big\}
\ls\ve^2+\ve_1^2(1+t)^{2\ve_2+1}\\
&\qquad+\ve_1\sum_{|a|\le2N-1}\int_0^t\int_{\cK}\frac{|\bar\p Z^au(s,x)|^2}{\w{s-|x|}^{1.1}}dxds
+\ve_1\sum_{|a|\le2N-1}\int_0^t\frac{\|\p Z^au(s)\|^2_{L^2(\cK)}ds}{(1+s)^{1.4}},
\end{split}
\end{equation*}
which yields \eqref{energy:A}.
\end{proof}

\subsection{Decay estimates of the local energy and improved energy estimates}\label{sect:LocEnergy}

To improve \eqref{energy:A}, one requires a better estimate on the boundary term \eqref{energy:A3}.
To this end, we first derive the precise time decay estimates of the local energy for problem \eqref{NLW}
by the spacetime pointwise estimate and the elliptic estimate.
We now treat the local energy $\|u\|_{L^2(\cK_R)}$ by utilizing \eqref{pw:crit}.
\begin{lemma}
Under the assumptions of Theorem \ref{thm1}, let $u$ be the solution of \eqref{NLW} and suppose that \eqref{BA1}-\eqref{BA3} hold.
Then
\begin{equation}\label{u:loc:pw}
\|u\|_{L^\infty(\cK_R)}\ls(\ve+\ve_1^2)(1+t)^{-1}.
\end{equation}
\end{lemma}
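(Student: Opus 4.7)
The plan is to apply the local pointwise estimate \eqref{pw:crit} componentwise to $u=(u^1,\ldots,u^M)$, treating each $u^I$ as the solution of the Dirichlet problem $\Box u^I=F^I$, $u^I|_{\p\cK}=0$, with source
\[ F^I:=\sum_{J,K,L=1}^M\sum_{\cQ\in\{Q_0,Q_{\alpha\beta}\}}C_{IJKL}u^J\cQ(u^K,u^L). \]
The support hypothesis $\supp_x F^I(t,\cdot)\subset\{|x|\le t+M_0\}$ needed for \eqref{pw:crit} follows from the finite propagation property of $u$ recorded right after the bootstrap assumptions. It then suffices to bound each of the four pieces on the right-hand side of \eqref{pw:crit}, for a small fixed pair $\mu,\nu\in(0,1/2)$ (say $\mu=\nu=\ve_2$), by $C(\ve+\ve_1^2)$.

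The initial-data norm $\|(\ve u_0,\ve u_1)\|_{H^2(\cK)}$ is $\ls\ve$ by \eqref{initial:data}, and $\sum_{|a|\le1}\sup_y|\p^a F^I(0,y)|\ls\ve^3$, since $F^I$ is cubic in $(u,\p u)$ at $t=0$ and the data are smooth with support in $\{|x|\le M_0\}$. For the two genuinely nonlinear source contributions we combine \eqref{null:sturcture} with \eqref{BA1}--\eqref{BA3}: after distributing $\p^a$ (with $|a|\le2$) across $u^J\cQ(u^K,u^L)$ and applying \eqref{null:sturcture} to each surviving $\cQ$, every summand is dominated by a product $|Z^{a_1}u|\,|\bar\p Z^{a_2}u|\,|\p Z^{a_3}u|$ with $|a_1|+|a_2|+|a_3|\le|a|\le 2\le N$, whence
\begin{equation*}
|\p^a F^I(s,y)|\ls\ve_1^3\,\w{y}^{-1}\w{s+|y|}^{2\ve_2-3/2}\w{s-|y|}^{-1},\qquad|a|\le 2.
\end{equation*}
Restricting this bound to $\cK_3$, where $\w{y}\approx 1$ and $\w{s-|y|}\approx\w{s+|y|}\approx\w{s}$ for $s\ge 1$, and integrating over a bounded set, yields $\sum_{|a|\le 1}\sup_s\w{s}\|\p^a F^I(s,\cdot)\|_{L^2(\cK_3)}\ls\ve_1^3\sup_s\w{s}^{2\ve_2-3/2}\ls\ve_1^3$. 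For the weighted global supremum we insert the pointwise bound into $\w{y}^{1/2}\cW_{3/2+\mu,1+\nu}(s,y)|\p^a F^I|$ and split into the regions $|y|\le s/2$ and $|y|\ge s/2$; in each region the $\min$ inside $\cW_{\mu,\nu}$ collapses to a single factor, and with $\mu=\nu=\ve_2$ the combined exponent of $\w{s}$ (respectively $\w{y}$) becomes $-1/2+\mu+\nu+2\ve_2=-1/2+4\ve_2<0$, so the supremum is again $\ls\ve_1^3$.

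Collecting the four pieces in \eqref{pw:crit} delivers $\w{t}\|u^I\|_{L^\infty(\cK_R)}\ls\ve+\ve_1^3$, and summing over $I$ with $\ve_1^3\le\ve_1^2$ produces \eqref{u:loc:pw}. The delicate step is the weighted pointwise source estimate: the weight $\w{s+|y|}^{3/2+\mu}\min\{\w{y},\w{s-|y|}\}^{1+\nu}$ is borderline against the decay that a generic cubic nonlinearity would provide, and the argument closes only because the null structure \eqref{null:sturcture} forces one of the two derivative factors inside each $\cQ$ to be a good derivative $\bar\p u$, contributing the improved rate $\w{s+|y|}^{\ve_2-1}$ in place of the much weaker $\w{s-|y|}^{-1}$ available for $\p u$; without this gain at the light cone $|y|\approx s$ the weighted supremum would fail to be finite.
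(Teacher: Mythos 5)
Your proposal is correct and follows essentially the same route as the paper: apply \eqref{pw:crit} with $\mu=\nu=\ve_2$ to each component, bound the data terms via \eqref{initial:data}, and control the source terms through the pointwise bound $|\p^a(u^J\cQ(u^K,u^L))|\ls\ve_1^3\w{y}^{-1}\w{s-|y|}^{-1}\w{s+|y|}^{2\ve_2-3/2}$ obtained from \eqref{null:sturcture} and \eqref{BA1}--\eqref{BA3}. The only difference is that you spell out the region-splitting for the weighted supremum, which the paper leaves implicit.
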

\begin{proof}

Applying \eqref{pw:crit} with $\mu=\nu=\ve_2$ to \eqref{NLW} yields
\begin{equation}\label{u:loc:pw1}
\begin{split}
\w{t}&\|u\|_{L^\infty(\cK_R)}\ls\ve+\sum_{\substack{J,K,L=1\cdots,M,\\|a|\le1}}\sup_{s\in[0,t]}\w{s}\|\p^a(u^J\cQ(u^K,u^L))(s,\cdot)\|_{L^2(\cK_3)}\\
&+\sum_{\substack{J,K,L=1\cdots,M,\\|a|\le2}}\sup_{(s,y)\in[0,t]\times\cK}\w{y}^{1/2}\cW_{3/2+\ve_2,1+\ve_2}(s,y)|\p^a(u^J\cQ(u^K,u^L))(s,y)|,
\end{split}
\end{equation}
where the initial data condition \eqref{initial:data} is used.
It follows from \eqref{null:sturcture}, \eqref{BA1}, \eqref{BA2}, \eqref{BA3} and $N\ge42$ that
\begin{equation*}
\sum_{|a|\le2}|\p^a(u^J\cQ(u^K,u^L))(s,y)|\ls\ve_1^3\w{y}^{-1}\w{s-|y|}^{-1}\w{s+|y|}^{2\ve_2-3/2}.
\end{equation*}
This, together with \eqref{u:loc:pw1}, yields \eqref{u:loc:pw}.
\end{proof}

\begin{lemma}
Under the assumptions of Theorem \ref{thm1}, let $u$ be the solution of \eqref{NLW} and suppose that \eqref{BA1}-\eqref{BA3} hold.
Then we have
\begin{equation}\label{LocEnergydt}
\sum_{j\le2N-6}\|\p_t\p_t^ju\|_{L^2(\cK_R)}\ls(\ve+\ve_1^2)(1+t)^{2\ve_2-1/2}\ln^2(2+t).
\end{equation}
\end{lemma}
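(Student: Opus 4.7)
The plan is to apply the pointwise linear estimate \eqref{pw:crit:dt} componentwise to $w=\p_t^j u^I$ for every $1\le I\le M$ and $0\le j\le 2N-6$. Differentiating the Dirichlet condition in $t$ gives $\p_t^j u^I|_{\p\cK}=0$, while $\Box(\p_t^j u^I)=\p_t^j F^I$ with $F^I$ the nonlinearity in \eqref{NLW}. Applied to $w=\p_t^j u^I$, the inequality \eqref{pw:crit:dt} reads schematically
\begin{equation*}
\frac{\w{t}}{\ln^2(2+t)}\|\p_t^{j+1}u^I\|_{L^\infty(\cK_R)}\ls \|(\p_t^j u^I,\p_t^{j+1}u^I)(0,\cdot)\|_{H^3(\cK)}+\mathrm{I}_b+\mathrm{I}_c+\mathrm{I}_d,
\end{equation*}
where $\mathrm{I}_b=\sum_{|a|\le 2}\sup_s\w{s}\|\p^a\p_t^j F\|_{L^2(\cK_3)}$, $\mathrm{I}_c$ collects the initial-time pointwise terms, and $\mathrm{I}_d=\sum_{|a|\le 3}\sup_{(s,y)}\w{y}^{1/2}\cW_{1,1}(s,y)|Z^a\p_t^j F|$. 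Since $\cK_R$ is bounded, $L^\infty(\cK_R)\hookrightarrow L^2(\cK_R)$, and after summing over $I$ and $j$ this will yield \eqref{LocEnergydt}. The data contributions are $O(\ve)$ via the compatibility conditions, \eqref{initial:data}, and use of the equation to express $\p_t^{j+1}u(0,\cdot)$ in terms of $(u_0,u_1)$.

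The core work is to estimate $\mathrm{I}_b$ and $\mathrm{I}_d$. Using Lemma \ref{lem:eqn:high} and the Leibniz rule, both $\p^a\p_t^j F^I$ and $Z^a\p_t^j F^I$ are written as sums of trilinear products of the form $V_1\cQ(V_2,V_3)$, where each $V_k$ is a derivative of $u$ and the total derivative count across the three factors is at most $|a|+j+2\le 2N-1$. Consequently at most one factor can carry more than $N$ derivatives. For the one or two low-order factors, the bootstrap bounds \eqref{BA1}-\eqref{BA3} together with the null structure \eqref{null:sturcture} apply; whenever the $u$-factor itself (with no derivative) appears we invoke instead the sharper decay \eqref{u:loc:pw}. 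For the possibly high-order factor, the Sobolev embedding \eqref{Sobo:ineq} reduces the pointwise or $L^2(\cK_3)$ bound to $L^2(\cK)$-norms of $Z$-derivatives, which are then controlled by \eqref{energy:A} and \eqref{energy:dtdx}, introducing a growth factor of order $\ve_1(1+t)^{\ve_2+1/2}$. Multiplying these pointwise and $L^2$ bounds, inserting the weights $\w{s}$ (for $\mathrm{I}_b$) or $\cW_{1,1}$ (for $\mathrm{I}_d$), and using the null structure on the low-order pair to extract a $\bar\p$-decay that absorbs a factor of $\w{s+|y|}$ in $\cW_{1,1}$, yields
\begin{equation*}
\mathrm{I}_b+\mathrm{I}_d\ls (\ve+\ve_1^2)(1+t)^{2\ve_2+1/2}.
\end{equation*}

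Combining with \eqref{pw:crit:dt} gives $\|\p_t^{j+1}u^I\|_{L^\infty(\cK_R)}\ls(\ve+\ve_1^2)(1+t)^{2\ve_2-1/2}\ln^2(2+t)$, and summing over $I$ and $j$ yields \eqref{LocEnergydt}. The hard step will be handling the high-order factor for which \eqref{BA1}-\eqref{BA3} give no pointwise bound: the Sobolev-plus-energy substitution must be arranged carefully, in case analysis on which of the three factors is high-order and which pairing with the null structure extracts the $\bar\p$, so that the strong weight $\cW_{1,1}$ in $\mathrm{I}_d$ is absorbed without producing supra-linear growth in $t$. The regularity budget $j\le 2N-6$ is tuned precisely so that the two extra $Z$-derivatives introduced by \eqref{Sobo:ineq} still keep the high-order factor within the range $|a|\le 2N-1$ for which \eqref{energy:A} is available.
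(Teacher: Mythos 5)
Your plan mirrors the paper's: apply \eqref{pw:crit:dt} to $w=\p_t^ju^I$ with $j\le 2N-6$, decompose $Z^a\p_t^jF^I$ into trilinear products, use the bootstrap bounds \eqref{BA1}--\eqref{BA3} for low-order factors, and for the one possible high-order factor invoke the Sobolev embedding \eqref{Sobo:ineq} together with the energy bound \eqref{energy:A}; this is exactly the role of \eqref{LocEnergydt2} in the paper, which yields $|\p Z^au|\ls\ve_1\w{x}^{-1/2}(1+t)^{\ve_2+1/2}$ for $|a|\le 2N-3$. The regularity budget observation ($j\le 2N-6$ leaves headroom for the three derivatives in the weighted term of \eqref{pw:crit:dt} and the two from \eqref{Sobo:ineq}) is also correct.

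Two remarks on the parts of your plan that diverge from the paper's route. First, you propose using the null structure \eqref{null:sturcture} to extract a $\bar\p$-factor in order to absorb the $\w{s+|y|}$ from $\cW_{1,1}$. In fact for this particular lemma the crude bound $|\cQ(f,g)|\ls|\p f||\p g|$ already suffices: the paper's \eqref{LocEnergydt3} contains no $\bar\p$, and the numerology closes because in the critical case (one factor of high order, the others estimated by \eqref{BA1}, \eqref{BA3}) the factor $\w{y}^{1/2}\cW_{1,1}(s,y)$ combined with the support restriction $|y|\ls s$ and the elementary inequality $\w{y}\w{s-|y|}\gt\w{s+|y|}$ yields $(1+s)^{2\ve_2+1/2}$. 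Moreover, if the high-order factor sits inside $\cQ$, the null structure unavoidably produces a term $|\p f||\bar\p g|$ with $g$ high-order, for which neither \eqref{BA2} nor \eqref{LocEnergydt2} gives a $\bar\p$-bound; you would have to fall back to $|\bar\p g|\le|\p g|$ there anyway, i.e.\ to the crude bound. Second, your suggestion to invoke \eqref{u:loc:pw} whenever the undifferentiated $u$ appears is only admissible in the term $\mathrm{I}_b$, since \eqref{u:loc:pw} is a \emph{local} bound on $\cK_R$ while $\mathrm{I}_d$ takes a supremum over all $y\in\cK$; for $\mathrm{I}_d$ one must (and can) use \eqref{BA3} for the undifferentiated factor. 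Neither of these points is fatal to your argument, but both would cost you time if you chased them; the paper's version is leaner here.
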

\begin{proof}
Applying \eqref{pw:crit:dt} to \eqref{eqn:high} with $Z^a=\p_t^j$, $j\le2N-6$ and \eqref{initial:data} leads to
\begin{equation}\label{LocEnergydt1}
\begin{split}
\frac{\w{t}}{\ln^2(2+t)}\sum_{j\le2N-6}\|\p_t\p_t^ju\|_{L^\infty(\cK_R)}
\ls\ve+\sum_{\substack{J,K,L=1\cdots,M,\\|a|\le2N-4}}\sup_{s\in[0,t]}\w{s}\|\p^a(u^J\cQ(u^K,u^L))(s,\cdot)\|_{L^2(\cK_3)}\\
+\sum_{|a|\le3}\sum_{\substack{J,K,L=1\cdots,M,\\j\le2N-6}}\sup_{(s,y)\in[0,t]\times\cK}\w{y}^{1/2}\cW_{1,1}(s,y)|Z^a\p_t^j(u^J\cQ(u^K,u^L))(s,y)|.
\end{split}
\end{equation}
By \eqref{Sobo:ineq} and \eqref{energy:A}, we can get
\begin{equation}\label{LocEnergydt2}
\sum_{|a|\le2N-3}|\p Z^au(t,x)|\ls\ve_1\w{x}^{-1/2}(1+t)^{\ve_2+1/2}.
\end{equation}
Then it follows from \eqref{BA1}, \eqref{BA3} and \eqref{LocEnergydt2} that
\begin{equation}\label{LocEnergydt3}
\begin{split}
&\sum_{|a|\le3}\sum_{j\le2N-6}|Z^a\p_t^j(u^J\cQ(u^K,u^L))(s,y)|
\ls\sum_{|b|\le3}\sum_{k+|b|+|c|+|d|\le2N-6}|\p_t^kZ^bu||\p Z^cu||\p Z^du|\\
&\ls\ve_1^3\w{s+|y|}^{\ve_2-1/2}\w{y}^{-1}(1+s)^{\ve_2+1/2}\w{s-|y|}^{-1}
+\ve_1^3\w{y}^{-3/2}(1+s)^{\ve_2+1/2}\w{s-|y|}^{-2}.
\end{split}
\end{equation}
Substituting \eqref{LocEnergydt3} into \eqref{LocEnergydt1} yields
\begin{equation*}
\sum_{j\le2N-6}\|\p_t\p_t^ju\|_{L^\infty(\cK_R)}\ls(\ve+\ve_1^3)(1+t)^{2\ve_2-1/2}\ln^2(2+t),
\end{equation*}
which completes the proof of \eqref{LocEnergydt}.
\end{proof}

\begin{lemma}
Under the assumptions of Theorem \ref{thm1}, let $u$ be the solution of \eqref{NLW} and suppose that \eqref{BA1}-\eqref{BA3} hold.
Then one has
\begin{equation}\label{loc:energy}
\sum_{|a|\le2N-6}\|\p^au\|_{L^2(\cK_R)}\ls(\ve+\ve_1^2)(1+t)^{3\ve_2-1/2}.
\end{equation}
\end{lemma}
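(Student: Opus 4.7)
The plan is to prove \eqref{loc:energy} by induction on the number of spatial derivatives, reducing each term $\|\p_t^k\p_x^b u\|_{L^2(\cK_R)}$ to pure time derivatives (already controlled by \eqref{u:loc:pw} and \eqref{LocEnergydt}) plus nonlinear contributions absorbed via the bootstrap \eqref{BA1}--\eqref{BA3}. The mechanism is a localized version of the interior elliptic regularity \eqref{ellip}: fix a smooth cutoff $\chi$ with $\chi\equiv 1$ on $\cK_R$, $\supp\chi\subset\cK_{R+1}$, equal to $1$ near $\p\cK$, and apply \eqref{ellip} to $v:=\chi\,\p_t^k u$, which vanishes on $\p\cK$ by the Dirichlet condition.

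For the base case $|b|=0$: the term $\|u\|_{L^2(\cK_R)}$ is bounded by $|\cK_R|^{1/2}\|u\|_{L^\infty(\cK_R)}$ together with \eqref{u:loc:pw}, while $\|\p_t^k u\|_{L^2(\cK_R)}$ for $1\le k\le 2N-5$ comes from \eqref{LocEnergydt}. Absorbing $\ln^2(2+t)\ls(1+t)^{\ve_2}$ converts the exponent $2\ve_2-1/2$ into $3\ve_2-1/2$, yielding $\sum_{k\le 2N-6}\|\p_t^k u\|_{L^2(\cK_R)}\ls(\ve+\ve_1^2)(1+t)^{3\ve_2-1/2}$, the desired bound on the $|b|=0$ slice.

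For the inductive step with $|b|\ge 2$, expanding $\Delta v$ via $\Delta=\p_t^2-\Box$ together with the product rule and plugging into \eqref{ellip} yields
\begin{equation*}
\|\p_t^k\p_x^b u\|_{L^2(\cK_R)}\ls\|\p_t^{k+2}u\|_{H^{|b|-2}(\cK_{R+1})}+\|\p_t^k\Box u\|_{H^{|b|-2}(\cK_{R+1})}+\|\p_t^k u\|_{H^{|b|-1}(\cK_{R+1})}.
\end{equation*}
The first and third summands involve strictly fewer spatial derivatives (namely $|b|-2$ and $|b|-1$) with the same total budget $\le k+|b|\le 2N-6$, so they are controlled by the induction hypothesis; the enlargement from $\cK_R$ to $\cK_{R+1}$ is swallowed by choosing the working radius a little larger at the outset. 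For the nonlinear term, Leibniz expansion of $\p_t^k\Box u\sim\p_t^k(u\cdot\p u\cdot\p u)$ produces triple products of total derivative order $\le 2N-6<2N$; since $N\ge 42$, at most one factor carries order exceeding $N$, so the bootstrap pointwise bounds \eqref{BA1}--\eqref{BA3} (sharpened on $\cK_{R+1}$ via $\w{x}\sim 1$ and $\w{t-|x|}\sim\w{t}$) handle two factors while the top-order factor in $L^2(\cK_{R+1})$ is absorbed by the induction hypothesis, contributing only a harmless extra $\ve_1^2$ prefactor. The orphan case $|b|=1$ is handled by the interpolation $\|\p_t^k\p_x u\|_{L^2(\cK_R)}\ls\|\p_t^k u\|_{L^2(\cK_{R+1})}^{1/2}\|\p_t^k u\|_{H^2(\cK_{R+1})}^{1/2}$, valid for the Dirichlet-zero function $\p_t^k u$, which reduces to the $|b|\in\{0,2\}$ cases.

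The main technical hurdle is the bookkeeping of the cubic nonlinearity: one must confirm that every Leibniz triple product admits at most one top-order factor, so that only a single $L^2$ factor is handed to the induction while the other two are estimated pointwise. The margin $2N-6<2N$ together with $N\ge 42$ leaves enough room for the derivative budget to split favorably, and the scheme closes.
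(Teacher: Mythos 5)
Your overall strategy is the same as the paper's: localize with a cutoff, apply the elliptic estimate \eqref{ellip}, convert $\Delta$ into $\p_t^2-\Box$, and descend from spatial to temporal derivatives, with the pure-time-derivative anchor supplied by \eqref{u:loc:pw} and \eqref{LocEnergydt} and the nonlinear source controlled by the bootstrap pointwise bounds. The inductive step for $|b|\ge2$ matches the paper's recursion \eqref{loc:energy1}, and the base case is handled identically. However, your treatment of the orphan case $|b|=1$ has a genuine gap.

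The problem is that the inequality $\|\p_t^k\p_x u\|_{L^2(\cK_R)}\ls\|\p_t^k u\|_{L^2(\cK_{R+1})}^{1/2}\|\p_t^k u\|_{H^2(\cK_{R+1})}^{1/2}$ is circular when you try to reduce the $H^2$ factor to the $|b|\in\{0,2\}$ cases. First, the $|b|=2$ case of the claim covers $\|\p_t^{k'}\p_x^2 u\|$ only for $k'\le 2N-8$, while you need $\|\p_t^k\p_x^2 u\|$ for $k$ up to $2N-7$, so the budgets do not align. Second, once you feed $\|\p_t^k u\|_{H^2(\cK_{R+1})}$ through \eqref{ellip}, the resulting $\|\p_t^ku\|_{H^1(\cK_{R+2})}$ term contains $\|\p_t^k\p_x u\|$ on a strictly \emph{larger} annulus than the left-hand side $\cK_R$, so the Young-inequality absorption fails. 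In short, routing $|b|=1$ through the $H^2$ norm reintroduces the quantity you are trying to estimate without a shrinking coefficient.

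The fix, and what the paper actually does in \eqref{loc:energy5}, is to bypass $H^2$ entirely. For the cutoff function $f=\tilde\chi\,\p_t^ku^I$ (which vanishes on $\p\cK$ and has compact support) integrate by parts directly:
\begin{equation*}
\|\nabla f\|_{L^2(\cK)}^2=-\int_{\cK}f\,\Delta f\,dx,
\end{equation*}
then expand $\Delta f=\tilde\chi\Delta\p_t^ku^I+2\nabla\tilde\chi\cdot\nabla\p_t^ku^I+(\Delta\tilde\chi)\p_t^ku^I$ and use $\Delta\p_t^ku^I=\p_t^{k+2}u^I-\Box\p_t^ku^I$. The crucial point is that the mixed term $\int \tilde\chi\p_t^ku^I\,\nabla\tilde\chi\cdot\nabla\p_t^ku^I$ must itself be integrated by parts once more (as the paper does), turning it into a divergence plus a zeroth-order cutoff term; otherwise one hands a $\|\nabla\p_t^ku^I\|$ term back to the right-hand side and reopens the circularity. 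This reduces $E^{loc}_{2N-7}$ to pure time derivatives (up to order $2N-5$, supplied by \eqref{LocEnergydt}), the nonlinear source, and harmless lower-order cutoff contributions, without ever invoking the $|b|=2$ estimate. The rest of your argument, including the bookkeeping of the cubic nonlinearity, is sound; note only that the paper controls the top-order $L^2$ factor via the global energy bound \eqref{energy:dtdx} rather than the local induction hypothesis, which avoids having to track the radius dependence through the nonlinear term.
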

\begin{proof}
For $j=0,1,2,\cdots,2N-6$, set $\ds E_j^{loc}(t):=\sum_{k\le j}\sum_{I=1}^M\|\p_t^k\p_x^{2N-6-j}u^I\|_{L^2(\cK_{R+j})}$.
Note that for $j\le2N-8$, $\p_x^{2N-6-j}=\p_x^2\p_x^{2N-8-j}$.
Then applying \eqref{ellip} to $(1-\chi_{[R+j,R+j+1]})\p_t^{k}u^I$ leads to
\begin{equation}\label{loc:energy1}
\begin{split}
E_j^{loc}(t)&\ls\sum_{k\le j}\sum_{I=1}^M\|\p_x^{2N-6-j}(1-\chi_{[R+j,R+j+1]})\p_t^ku^I\|_{L^2(\cK)}\\
&\ls\sum_{k\le j}\sum_{I=1}^M\|\Delta(1-\chi_{[R+j,R+j+1]})\p_t^ku^I\|_{H^{2N-8-j}(\cK)}\\
&\quad+\sum_{k\le j}\sum_{I=1}^M\|(1-\chi_{[R+j,R+j+1]})\p_t^ku^I\|_{H^{2N-7-j}(\cK)}\\
&\ls\sum_{k\le j}\sum_{I=1}^M\|(1-\chi_{[R+j,R+j+1]})\Delta\p_t^ku^I\|_{H^{2N-8-j}(\cK)}+\sum_{j+1\le l\le2N-6}E_l^{loc}(t)\\
&\ls\sum_{k\le j}\sum_{I=1}^M\|\Box\p_t^ku^I\|_{H^{2N-8-j}(\cK_{R+j+1})}
+\sum_{k\le j}\sum_{I=1}^M\|\p_t^{k+2}u^I\|_{H^{2N-8-j}(\cK_{R+j+1})}\\
&\quad+\sum_{j+1\le l\le2N-6}E_l^{loc}(t),
\end{split}
\end{equation}
where the fact of $\Delta=\p_t^2-\Box$ is applied.
It follows from \eqref{NLW}, \eqref{BA1}, \eqref{BA3} and \eqref{energy:dtdx} that for $j\le2N-8$,
\begin{equation}\label{loc:energy2}
\sum_{k\le j}\|\Box\p_t^ku^I\|_{H^{2N-8-j}(\cK_{R+j+1})}\ls\ve_1^2(1+t)^{-1.4}.
\end{equation}
Substituting \eqref{loc:energy2} into \eqref{loc:energy1} yields that for $j\le2N-8$,
\begin{equation}\label{loc:energy3}
E_j^{loc}(t)\ls\ve_1^2(1+t)^{-1}+\sum_{j+1\le l\le2N-6}E_l^{loc}(t).
\end{equation}
Next we estimate $E_{2N-7}^{loc}(t)$. Note that
\begin{equation}\label{loc:energy4}
(E_{2N-7}^{loc}(t))^2\ls\sum_{k\le2N-7}\sum_{i=1}^2\sum_{I=1}^M\|\p_i[(1-\chi_{[R+2N-7,R+2N-6]})\p_t^ku^I]\|^2_{L^2(\cK)}.
\end{equation}
Denote $\tilde\chi:=1-\chi_{[R+2N-7,R+2N-6]}$. It follows from the integration by parts
together with the boundary condition in \eqref{NLW} that
\begin{equation}\label{loc:energy5}
\begin{split}
&\quad\sum_{i=1}^2\|\p_i[(1-\chi_{[R+2N-7,R+2N-6]})\p_t^ku^I]\|^2_{L^2(\cK)}\\
&=\int_{\cK}\sum_{i=1}^2\p_i[\tilde\chi\p_t^ku^I\p_i(\tilde\chi\p_t^ku^I)]dx
-\int_{\cK}\tilde\chi\p_t^ku^I\Delta(\tilde\chi\p_t^ku^I)dx\\
&=-\int_{\cK}\tilde\chi^2\p_t^ku^I(\Delta\p_t^ku^I)dx-\int_{\cK}\tilde\chi(\Delta\tilde\chi)|\p_t^ku^I|^2dx
-2\int_{\cK}\tilde\chi\p_t^ku^I\nabla\tilde\chi\cdot\nabla\p_t^ku^Idx\\
&=-\int_{\cK}\tilde\chi^2\p_t^ku^I\p_t^{k+2}u^Idx+\int_{\cK}\tilde\chi^2\p_t^ku^I\Box\p_t^ku^Idx
-\int_{\cK}\tilde\chi(\Delta\tilde\chi)|\p_t^ku^I|^2dx\\
&\quad-\int_{\cK}\dive(\tilde\chi\nabla\tilde\chi|\p_t^ku^I|^2)dx
+\int_{\cK}\dive(\tilde\chi\nabla\tilde\chi)|\p_t^ku^I|^2dx\\
&\ls(E_{2N-6}^{loc}(t))^2+\|\p_t^{2N-5}u^I\|^2_{L^2(\cK_{R+2N})}+\ve_1^4(1+t)^{-2},
\end{split}
\end{equation}
where we have used $k\le2N-7$, $\Delta=\p_t^2-\Box$ and the estimate of $\Box\p_t^ku^I$ like \eqref{loc:energy2}.
On the other hand, by \eqref{u:loc:pw} and \eqref{LocEnergydt}, one has that
\begin{equation}\label{loc:energy6}
\begin{split}
E_{2N-6}^{loc}(t)&\ls\sum_{j\le2N-5}\sum_{I=1}^M\|\p_t^ju^I\|_{L^2(\cK_{R+2N})}\\
&\ls\sum_{I=1}^M\Big(\sum_{j\le2N-6}\|\p_t\p_t^ju^I\|_{L^2(\cK_{R+2N})}+\|u^I\|_{L^2(\cK_{R+2N})}\Big)\\
&\ls(\ve+\ve_1^2)(1+t)^{\ve_2-1/2}\ln^2(2+t)+(\ve+\ve_1^2)(1+t)^{-1}.
\end{split}
\end{equation}
Therefore, \eqref{loc:energy} can be achieved by \eqref{LocEnergydt}, \eqref{loc:energy3}, \eqref{loc:energy4}, \eqref{loc:energy5} and \eqref{loc:energy6}.
\end{proof}

\begin{lemma}
Under the assumptions of Theorem \ref{thm1}, let $u$ be the solution of \eqref{NLW} and suppose that \eqref{BA1}-\eqref{BA3} hold.
Then
\begin{equation}\label{energy:B}
\sum_{|a|\le2N-8}\|\p Z^au\|_{L^2(\cK)}\ls\ve_1(1+t)^{3\ve_2}.
\end{equation}
\end{lemma}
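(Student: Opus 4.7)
The strategy is to re-run the ghost-weight energy argument that produced \eqref{energy:A}, but to replace the crude boundary-term estimate \eqref{energy:A3} (which relied on the weak bound \eqref{energy:dtdx} and produced the lossy factor $(1+t)^{2\ve_2+1}$) with the sharp local-energy decay \eqref{loc:energy}. Since \eqref{energy:B} caps $|a|$ at $2N-8$, we have two derivatives of slack relative to the range $|a|\le 2N-6$ in \eqref{loc:energy}, which is precisely what the trace theorem costs us.

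More concretely, for each $|a|\le 2N-8$, multiply \eqref{eqn:high} by $e^q\p_tZ^au^I$ with the same ghost weight $q(|x|-t)$ as before, integrate over $[0,t]\times\cK$, and sum in $I$. This produces, in complete analogy with \eqref{energy:A1},
\begin{equation*}
\|\p Z^au(t)\|^2_{L^2(\cK)}+\int_0^t\int_{\cK}\frac{|\bar\p Z^au|^2}{\w{s-|x|}^{1.1}}\,dxds
\ls \ve^2+|\cB^a|+\sum_{b+c+d\le a}\int_0^t\int_{\cK}|\p_tZ^au|\,\cI^{bcd}\,dxds,
\end{equation*}
with $\cB^a$ defined by \eqref{energy:A2}. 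The bulk (nonlinear) integrals are handled exactly as in \eqref{energy:A4}–\eqref{energy:A6}: the low-$|b|$ piece contributes $\ve_1$ times ghost-weight and $\int_0^t\|\p Z^bu\|^2(1+s)^{-1.4}ds$ terms, while the high-$|b|$ piece is controlled via Hardy's inequality by $\ve_1^2\w{t}^{\ve_2-2}\|\p Z^bu\|_{L^2(\cK)}$, which is even more than enough.

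The only essential change lies in bounding $\cB^a$. Using $\p\cK\subset\overline{\cK_1}$, the trace theorem, and $|a|+1\le 2N-7\le 2N-6$, we get
\begin{equation*}
|\cB^a|\ls \sum_{|b|\le |a|+1}\int_0^t\|\p\p^b u\|^2_{L^2(\cK_2)}\,ds
\ls \sum_{|c|\le 2N-6}\int_0^t\|\p^c u\|^2_{L^2(\cK_R)}\,ds,
\end{equation*}
and now \eqref{loc:energy} yields $|\cB^a|\ls (\ve+\ve_1^2)^2\int_0^t(1+s)^{6\ve_2-1}ds\ls (\ve+\ve_1^2)^2(1+t)^{6\ve_2}$. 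Combining everything and summing over $|a|\le 2N-8$,
\begin{equation*}
\sum_{|a|\le 2N-8}\|\p Z^au(t)\|^2_{L^2(\cK)}
\ls \ve^2+(\ve+\ve_1^2)^2(1+t)^{6\ve_2}
+\ve_1\sum_{|a|\le 2N-8}\int_0^t\frac{\|\p Z^au(s)\|^2_{L^2(\cK)}}{(1+s)^{1.4}}\,ds,
\end{equation*}
and Gronwall's inequality (the integral factor is $\ve_1$ times an integrable weight) gives $\sum_{|a|\le 2N-8}\|\p Z^au(t)\|^2_{L^2(\cK)}\ls \ve_1^2(1+t)^{6\ve_2}$, which is \eqref{energy:B}.

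The main subtlety is accounting: one must verify that the regularity shift $|a|+1\mapsto 2N-6$ fits inside the range available to \eqref{loc:energy}, and that the high-$|b|$ nonlinear term (which uses Hardy as in \eqref{energy:A6}) is treated with the improved pointwise bound \eqref{LocEnergydt2}; no new structural idea is needed beyond importing the sharpened local energy decay into the boundary term. The rest is a mechanical repetition of the earlier ghost-weight computation.
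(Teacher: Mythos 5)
Your proposal is correct and coincides with the paper's own argument: rerun the ghost-weight energy identity \eqref{energy:A1}--\eqref{energy:A6} unchanged, and in place of the crude bound \eqref{energy:A3} use the trace theorem together with the sharpened local-energy decay \eqref{loc:energy} (available since $|a|+1\le 2N-7<2N-6$) to get $|\cB^a|\ls\ve_1^2(1+t)^{6\ve_2}$, after which Gronwall closes the estimate. The only slight misstatement is the parenthetical claim that the high-$|b|$ nonlinear term needs the improved pointwise bound \eqref{LocEnergydt2}; in fact \eqref{energy:A6} already handles it using only \eqref{BA1}, \eqref{BA2}, Hardy, and \eqref{energy:A5}, but this does not affect the proof.
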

\begin{proof}
The proof of \eqref{energy:B} is similar to that of \eqref{energy:A} with a better estimate than \eqref{energy:A3}
for the boundary term $\cB^a$ defined in \eqref{energy:A2}.
In fact, by applying the same argument as in \eqref{energy:A3} and utilizing \eqref{loc:energy}, one has
\begin{equation}\label{energy:B1}
\sum_{|a|\le2N-8}|\cB^a|\ls\sum_{|b|\le|a|+1\le2N-7}\int_0^t\|\p \p^bu(s)\|^2_{L^2(\cK_2)}ds\ls\ve_1^2\int_0^t(1+s)^{6\ve_2-1}ds\ls\ve_1^2(1+t)^{6\ve_2}.
\end{equation}
Substituting \eqref{energy:B1} into \eqref{energy:A1} instead of \eqref{energy:A3} yields \eqref{energy:B}.
\end{proof}

\section{Improved pointwise estimates and proof of Theorem \ref{thm1}}\label{sect4}

\subsection{Decay estimates on the good derivatives of solution}

It is pointed out that the estimate \eqref{BA3} of $Z^au$ will play an essential role in the decay estimates
of the good derivatives \eqref{BA2}.
For this purpose, we firstly improve the decay estimate of the local energy \eqref{loc:energy} in Section \ref{sect:LocEnergy}
by utilizing \eqref{energy:B} instead of \eqref{energy:A}.
\begin{lemma}\label{lem:4-1}
Under the assumptions of Theorem \ref{thm1}, let $u$ be the solution of \eqref{NLW} and suppose that \eqref{BA1}-\eqref{BA3} hold.
Then we have
\begin{equation}\label{loc:improv}
\sum_{|a|\le2N-13}\|\p^au\|_{L^2(\cK_R)}\ls(\ve+\ve_1^2)(1+t)^{5\ve_2-1}.
\end{equation}
\end{lemma}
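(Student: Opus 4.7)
The strategy is to repeat the three-step scheme used to prove \eqref{loc:energy}---pointwise control of $u$, local-in-space $L^\infty$ bound on the pure time derivatives, and elliptic bootstrap for mixed derivatives---but to feed in the sharper vector-field energy \eqref{energy:B} in place of \eqref{energy:A}, and to invoke \eqref{pw:crit:dt'} (which has no $\ln^2$ loss) in place of \eqref{pw:crit:dt}. Applying the Sobolev embedding \eqref{Sobo:ineq} to \eqref{energy:B} immediately yields
\[
\sum_{|a|\le 2N-10}|\p Z^au(t,x)|\ls \ve_1\w{x}^{-1/2}(1+t)^{3\ve_2},
\]
which is a factor $(1+t)^{1/2-2\ve_2}$ sharper than \eqref{LocEnergydt2} and is the single input responsible for the improved decay.

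Using this improved bound together with the null-form identity \eqref{null:sturcture}, the bootstrap assumptions \eqref{BA1}--\eqref{BA3}, and the pointwise control \eqref{u:loc:pw} of $u$ itself, the differentiated nonlinearity can be dominated by
\[
|Z^a\p_t^j(u^J\cQ(u^K,u^L))|\ls \ve_1^3\w{y}^{-1}\w{s+|y|}^{3\ve_2-1/2}\w{s-|y|}^{-1}+\ve_1^3\w{y}^{-3/2}\w{s+|y|}^{2\ve_2-1}\w{s-|y|}^{-1}
\]
for $|a|+j$ in the admissible range. Plugging this into \eqref{pw:crit:dt'} applied to $w=\p_t^j u^I$ with $\mu,\nu\in(0,1/2)$ chosen so that $\mu+\nu\le\ve_2$, the weighted pointwise supremum on the right-hand side is dominated by $C(\ve+\ve_1^3)(1+t)^{5\ve_2}$, while the local $L^2$ piece $\sup_s\w{s}\|\p^a F(s,\cdot)\|_{L^2(\cK_3)}$ is even better since inside $\cK_3$ one has $\w{y}\sim 1$ and $\w{s-|y|}\sim\w{s}$. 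Dividing by the factor $\w{t}$ on the left of \eqref{pw:crit:dt'} delivers
\[
\sum_{j\le 2N-12}\|\p_t\p_t^j u\|_{L^\infty(\cK_R)}\ls (\ve+\ve_1^2)(1+t)^{5\ve_2-1},
\]
which removes both the $(1+t)^{1/2}$-type loss and the $\ln^2(2+t)$ factor present in \eqref{LocEnergydt}.

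I would then run the elliptic bootstrap of \eqref{loc:energy1}--\eqref{loc:energy5} verbatim with the shifted indices: set $E^{loc}_j(t):=\sum_{k\le j}\sum_I\|\p_t^k\p_x^{2N-13-j}u^I\|_{L^2(\cK_{R+j})}$, use $\Delta=\p_t^2-\Box$ together with \eqref{ellip} to trade two spatial derivatives for two time derivatives plus the nonlinearity $\Box u$ (whose localized $H^{\cdot}(\cK_{R+j+1})$-norm is $O(\ve_1^2(1+t)^{-1})$ by \eqref{BA1}, \eqref{BA3} and \eqref{energy:dtdx}), descend the induction on $j$, and handle the top index by the integration-by-parts identity of \eqref{loc:energy5}. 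This reduces $\sum_{|a|\le 2N-13}\|\p^au\|_{L^2(\cK_R)}$ to $\|u\|_{L^2(\cK_R)}$ plus $\sum_{j\le 2N-13}\|\p_t\p_t^j u\|_{L^2(\cK_R)}$, both of which are already bounded by $(\ve+\ve_1^2)(1+t)^{5\ve_2-1}$ by \eqref{u:loc:pw} and the step above.

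The main technical hurdle is the arithmetic of weights: one has to calibrate $\mu,\nu$ in \eqref{pw:crit:dt'} so that the growth in $|y|$ from $\w{y}^{1/2}\cW_{1+\mu+\nu,1}(s,y)$ is exactly absorbed by the spatial decay $\w{y}^{-1}$ coming from $|u^J|$ and the extra $\w{y}^{-1/2}$ from the improved bound on $|\p Z^a u^K|$, producing the exponent $5\ve_2$ and no worse. The drop in differentiability from $2N-6$ to $2N-13$ is precisely what Sobolev embedding (two vector fields) together with the repeated elliptic-estimate descent (two spatial derivatives per level) consume, leaving no slack in the derivative budget.
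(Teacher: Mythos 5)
Your proposal is correct and follows the same strategy as the paper: both identify the key input as combining the vector-field energy bound \eqref{energy:B} with the Sobolev embedding \eqref{Sobo:ineq} to obtain the improved pointwise estimate $\sum_{|a|\le2N-10}|\p Z^au|\ls\ve_1\w{x}^{-1/2}(1+t)^{3\ve_2}$ in place of \eqref{LocEnergydt2}, then rerun the local-energy argument of Section~\ref{sect:LocEnergy}. The only cosmetic deviation is that you invoke the log-free variant \eqref{pw:crit:dt'} with $\mu+\nu\le\ve_2$ while the paper sticks with \eqref{pw:crit:dt} and absorbs the resulting $\ln^2(2+t)$ into the slack from $4\ve_2$ to $5\ve_2$; the two routes cost the same and land on the same exponent.
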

\begin{proof}
The proof procedure is similar to that in Section \ref{sect:LocEnergy} except deriving a better estimate than \eqref{LocEnergydt3}.

At first, combining \eqref{Sobo:ineq} and \eqref{energy:B} leads to
\begin{equation}\label{loc:improv1}
\sum_{|a|\le2N-10}|\p Z^au|\ls\ve_1\w{x}^{-1/2}(1+t)^{3\ve_2}.
\end{equation}
With the estimate \eqref{loc:improv1} instead of \eqref{LocEnergydt2}, the estimate \eqref{LocEnergydt3} can be improved into
\begin{equation*}
\sum_{|a|\le3}\sum_{j\le2N-13}|Z^a\p_t^j(u^J\cQ(u^K,u^L))(s,y)|
\ls\ve_1^3\w{s+|y|}^{\ve_2-1/2}\w{y}^{-1}(1+s)^{3\ve_2}\w{s-|y|}^{-1},
\end{equation*}
which yields
\begin{equation*}
\sum_{j\le2N-13}\|\p_t\p_t^ju\|_{L^2(\cK_R)}\ls(\ve+\ve_1^2)(1+t)^{4\ve_2-1}\ln^2(2+t).
\end{equation*}
Therefore, \eqref{loc:improv} can be obtained.
\end{proof}

\begin{lemma}
Under the assumptions of Theorem \ref{thm1}, let $u$ be the solution of \eqref{NLW} and suppose that \eqref{BA1}-\eqref{BA3} hold.
Then
\begin{equation}\label{BA3:improv}
\sum_{|a|\le2N-15}|Z^au|\ls(\ve+\ve_1^2)\w{t+|x|}^{7\ve_2-1/2}\w{t-|x|}^{-\ve_2}.
\end{equation}
\end{lemma}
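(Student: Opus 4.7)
The plan is to apply the IBVP pointwise estimate \eqref{InLW:pw} (with $\mu=\nu=\ve_2$) to each $Z^a u^I$ for $|a|\le 2N-15$, and to show that every term on the right-hand side of \eqref{InLW:pw} is $\ls \ve+\ve_1^2$. The resulting inequality reads $|Z^au|\ls (\ve+\ve_1^2)\w{t+|x|}^{-1/2}\w{t-|x|}^{-\ve_2}\ln^2(2+t+|x|)$, and absorbing the logarithm into $\w{t+|x|}^{7\ve_2}$ yields \eqref{BA3:improv}.

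I would regard $Z^a u^I$ as the solution of a linear wave IBVP with source $F=\Box Z^a u^I$, where $F$ is given by \eqref{eqn:high}. After applying the outer $\p^b$ with $|b|\le 5$ that appears in \eqref{InLW:pw}, each summand in $\p^b F$ is a schematic cubic product $Z^{b_1}u\cdot \cQ(Z^{b_2}u, Z^{b_3}u)$ with $|b_1|+|b_2|+|b_3|\le 2N-10$. The initial-data norm and the $t=0$ pointwise values of $F$ contribute $\ls \ve$ and $O(\ve^3)$ respectively, using \eqref{initial:data} and compact support. For the weighted local $L^2$ term $\sum_{|b|\le 4}\sup_s \w{s}^{1/2+\ve_2}\|\p^b F(s,\cdot)\|_{L^2(\cK_4)}$, I would place two of the three factors in $L^\infty(\cK_4)$ via \eqref{u:loc:pw} and \eqref{loc:improv1}, and the remaining factor in $L^2(\cK_4)$ via the improved local energy \eqref{loc:improv}; the product decays in $s$ fast enough to overcome the weight $\w{s}^{1/2+\ve_2}$, giving a contribution $\ls \ve+\ve_1^3$.

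The main work is the weighted pointwise sup $\sum_{|b|\le 5}\sup_{(s,y)\in[0,t]\times(\overline{\R^2\setminus \cK_2})}\w{y}^{1/2}\cW_{1+\ve_2,1+\ve_2}(s,y)|\p^b F(s,y)|$. Here I would apply the null-form identity \eqref{null:sturcture} to extract a good derivative in every $\cQ$, namely $|\cQ(Z^{b_2}u,Z^{b_3}u)|\ls |\bar\p Z^{b_2}u||\p Z^{b_3}u|+|\p Z^{b_2}u||\bar\p Z^{b_3}u|$, and insert the bootstrap estimates \eqref{BA1}, \eqref{BA2}, \eqref{BA3} for all factors whose vector-field count is $\le N$. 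Because the total count is at most $2N-10<2N$, at most one of the three factors can carry more than $N$ derivatives; that exceptional factor is recovered via the Sobolev embedding \eqref{Sobo:ineq} together with the improved energy bound \eqref{energy:B}, using the Hardy inequality \eqref{hardy:ineq} and the support property $\supp u\subset\{|y|\le s+M_0\}$ to absorb the $\w{s-|y|}$ factor produced by the null form. The resulting pointwise bound is of the schematic form $|F|\ls \ve_1^3 \w{y}^{-1}\w{s+|y|}^{2\ve_2-3/2}\w{s-|y|}^{-1}$, so multiplying by the weight $\w{y}^{1/2}\cW_{1+\ve_2,1+\ve_2}(s,y)$ and using $|y|\le s+M_0$ keeps the sup uniformly $\ls \ve_1^3$.

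The step I expect to be the main obstacle is that $Z^a u^I$ does not satisfy the homogeneous Dirichlet boundary condition when $Z^a$ contains the rotation $\Omega$ (or an unadorned spatial derivative), so \eqref{InLW:pw} does not apply to $Z^a u^I$ verbatim. I would handle this by the cutoff decomposition $Z^a u=(1-\chi_{[1/2,1]}(x))Z^a u+\chi_{[1/2,1]}(x) Z^a u$: the near-obstacle piece $(1-\chi_{[1/2,1]})Z^a u$ is supported in $\cK_1$ and is controlled pointwise by \eqref{Sobo:ineq} combined with the improved local energy \eqref{loc:improv}, while the far piece $\chi_{[1/2,1]}Z^a u$ vanishes in a neighborhood of $\p\cK$ and satisfies a linear wave equation with a cubic source plus commutator errors (both supported away from $\p\cK$), to which \eqref{InLW:pw} applies cleanly. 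Combining the two pieces with the estimates above closes the argument.
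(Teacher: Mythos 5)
Your strategy coincides with the paper's: the paper sets $\tilde Z:=\chi_{[1/2,1]}(x)Z$ so that $\tilde Z^au^I$ satisfies the Dirichlet condition, applies \eqref{InLW:pw} with $\mu=\nu=\ve_2$ to $\tilde Z^au^I$ (writing $\Box\tilde Z^au^I=\Box Z^au^I+\Box(\tilde Z^a-Z^a)u^I$, the latter supported near the obstacle), controls the local $L^2$ source terms and the near-obstacle piece by the local energy decay \eqref{loc:energy}/\eqref{loc:improv}, and bounds the weighted pointwise sup of the cubic source by splitting according to whether the undifferentiated factor carries at most $N$ vector fields. So in outline your proof is correct and is the paper's proof.

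Two points of bookkeeping deserve correction, though neither is fatal. First, your assertion that the weighted sup $\sum_{|b|\le5}\sup\w{y}^{1/2}\cW_{1+\ve_2,1+\ve_2}(s,y)|\p^bF(s,y)|$ is uniformly $\ls\ve_1^3$ cannot hold with the estimates you invoke: when one factor carries more than $N$ vector fields, the only available pointwise bound is \eqref{loc:improv1} (i.e.\ \eqref{Sobo:ineq} combined with \eqref{energy:B}), which loses the $\w{s-|y|}^{-1}$ decay and half a power of $\w{s+|y|}$ compared with \eqref{BA1}--\eqref{BA2}; the sup then grows like $(1+t)^{C\ve_2}$. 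This growth is exactly what the $7\ve_2$ loss in \eqref{BA3:improv} is designed to absorb (compare the paper's \eqref{BA3:improv5}--\eqref{BA3:improv6}), so the conclusion survives, but the uniform bound you claim is not what you actually prove. Second, when the exceptional factor is the undifferentiated one, $Z^{b_1}u$ with $|b_1|\ge N+1$, the combination \eqref{Sobo:ineq} plus \eqref{energy:B} does not apply verbatim, since \eqref{energy:B} controls only $\|\p Z^au\|_{L^2(\cK)}$ and not $\|Z^au\|_{L^2(\cK)}$; you need either the Hardy step you allude to (which costs a further factor of $\w{s}$, still admissible) or, more simply, the pointwise inequality $|Zf|\ls\w{x}|\p f|$ (the paper's \eqref{energy:A5}) followed by \eqref{loc:improv1}. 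With either fix the argument closes.
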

\begin{proof}
Set
\begin{equation}\label{BA3:improv1}
\tilde Z:=\chi_{[1/2,1]}(x)Z
\end{equation}
and $\ds\tilde Z^au^I|_{\p\cK}=0$.
Applying \eqref{InLW:pw} to $\Box\tilde Z^au^I=\Box Z^au^I+\Box(\tilde Z^a-Z^a)u^I$ for $|a|\le2N-15$
and $\mu=\nu=\ve_2$ yields
\begin{equation}\label{BA3:improv2}
\begin{split}
&\frac{\w{t+|x|}^{1/2}\w{t-|x|}^{\ve_2}}{\ln^2(2+t+|x|)}|\tilde Z^au^I(t,x)|\\
&\ls\ve+\sum_{|b|\le4}\sup_{s\in[0,t]}\w{s}^{1/2+\ve_2}(\|\p^b\Box(\tilde Z^a-Z^a)u^I(s,\cdot)\|_{L^2(\cK_4)}
+\|\p^b\Box Z^au^I(s,\cdot)\|_{L^2(\cK_4)})\\
&\quad +\sum_{|b|\le5}\sup_{(s,y)\in[0,t]\times(\overline{\R^2\setminus\cK_2})}\w{y}^{1/2}\cW_{1+\ve_2,1+\ve_2}(s,y)|\p^b\Box Z^au^I(s,y)|,
\end{split}
\end{equation}
where we have used the facts of $\supp_x(\tilde Z^a-Z^a)u^I\subset\{|x|\le1\}$ and $\tilde Z=Z$ for $|x|\ge1$.
For the second line of \eqref{BA3:improv2}, it follows from \eqref{eqn:high}, \eqref{BA3} and \eqref{loc:energy}
that for $|a|\le2N-15$,
\begin{equation}\label{BA3:improv3}
\sum_{|b|\le4}\sup_{s\in[0,t]}\w{s}^{1/2+\ve_2}(\|\p^b\Box(\tilde Z^a-Z^a)u^I(s,\cdot)\|_{L^2(\cK_4)}
+\|\p^b\Box Z^au^I(s,\cdot)\|_{L^2(\cK_4)})
\ls(\ve+\ve_1^2)(1+t)^{4\ve_2}.
\end{equation}
Next, we treat the third line of \eqref{BA3:improv2}.
From \eqref{eqn:high} and \eqref{null:sturcture}, one has
\begin{equation}\label{BA3:improv4}
\sum_{|a|\le2N-15}\sum_{|b|\le5}|\p^b\Box Z^au^I|\ls\sum_{|b|+|c|+|d|\le2N-10}|Z^bu||\bar\p Z^cu||\p Z^du|.
\end{equation}
When $|b|\le N$ holds on the right hand side of \eqref{BA3:improv4}, then by \eqref{BA1}, \eqref{BA3} and \eqref{loc:improv1},
we arrive at
\begin{equation}\label{BA3:improv5}
\begin{split}
\sum_{\substack{|b|+|c|+|d|\le2N-10,\\|b|\le N}}|Z^bu||\bar\p Z^cu||\p Z^du|
&\ls\sum_{\substack{|b|+|c|+|d|\le2N-10,\\|b|\le N}}|Z^bu||\p Z^cu||\p Z^du|\\
&\ls\ve_1^3\w{s+|y|}^{4\ve_2-1/2}\w{y}^{-1}\w{s-|y|}^{-1}.
\end{split}
\end{equation}
When $|b|\ge N+1$, it can be deduced from \eqref{BA1}, \eqref{BA2}, \eqref{energy:A5} and \eqref{loc:improv1} that
\begin{equation}\label{BA3:improv6}
\sum_{\substack{|b|+|c|+|d|\le2N-10,\\|b|\ge N+1}}|Z^bu||\bar\p Z^cu||\p Z^du|
\ls\ve_1^3\w{y}^{-1/2}\w{s+|y|}^{4\ve_2-1}\w{s-|y|}^{-1}.
\end{equation}
Collecting \eqref{BA3:improv2}-\eqref{BA3:improv6} leads to
\begin{equation*}
\sum_{|a|\le2N-15}|\tilde Z^au^I(t,x)|\ls(\ve+\ve_1^2)\w{t+|x|}^{7\ve_2-1/2}\w{t-|x}^{-\ve_2}.
\end{equation*}
This, together with \eqref{loc:improv}, $R>1$, \eqref{BA3:improv1} and the Sobolev embedding,
yields \eqref{BA3:improv}.
\end{proof}

\begin{lemma}
Under the assumptions of Theorem \ref{thm1}, let $u$ be the solution of \eqref{NLW} and suppose that \eqref{BA1}-\eqref{BA3} hold.
Then one has that for $|x|\ge1+t/2$,
\begin{equation}\label{BA2:improv}
\sum_{|a|\le2N-17}|\bar\p Z^au|\ls(\ve+\ve_1^2)\w{t+|x|}^{10\ve_2-3/2}.
\end{equation}
\end{lemma}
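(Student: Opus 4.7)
\textbf{Plan for \eqref{BA2:improv}.} The strategy is a polar decomposition of $\bar\p$ combined with an ODE--type estimate for the outgoing null derivative along incoming null rays. Setting $r=|x|$, $L:=\p_t+\p_r$ and $\underline{L}:=\p_t-\p_r$, one has the algebraic identity
\[
\bar\p_i=\frac{x_i}{|x|}L-\frac{\varepsilon_{ij}x_j}{|x|^2}\Omega,\qquad i=1,2,
\]
so that $|\bar\p Z^a u|\ls|LZ^a u|+|\Omega Z^a u|/|x|$. The angular term is bounded directly by \eqref{BA3:improv}, using $|x|\approx\w{t+|x|}$ in the region $|x|\ge 1+t/2$, and already contributes at the required level $(\ve+\ve_1^2)\w{t+|x|}^{7\ve_2-3/2}$.

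To estimate $LZ^a u$, I would introduce the one-dimensional reduction $V^a:=r^{1/2}Z^a u$. Using $\Box=\p_t^2-\p_r^2-r^{-1}\p_r-r^{-2}\Omega^2$ and $[\Box,Z]=0$ for every $Z\in\{\p,\Omega\}$, a direct calculation gives
\[
\underline{L}(LV^a)=L\underline{L}V^a=r^{1/2}\Box Z^a u+r^{-2}\Big(\tfrac14 V^a+\Omega^2 V^a\Big).
\]
The plan is then to integrate this identity along the incoming null ray $\gamma(s):=\big(s,\tfrac{x}{|x|}(t+|x|-s)\big)$, $s\in[0,t]$, through $(t,x)$. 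Along this ray $|\gamma(s)|=t+|x|-s\ge|x|\ge 1$, so the ray never touches $\cO$; moreover $\w{s+|\gamma(s)|}\equiv\w{t+|x|}$ is constant while $\w{s-|\gamma(s)|}=\w{2s-t-|x|}$. When $t+|x|>M_0$, the endpoint $\gamma(0)$ lies outside $\supp(u_0,u_1)$ and hence $LV^a(0,\gamma(0))=0$ (the complementary case $t+|x|\ls 1$ is trivial since $\w{t+|x|}$ is bounded). This yields
\[
|LV^a(t,x)|\le\int_0^t\Big(r^{1/2}|\Box Z^a u|+r^{-3/2}(|Z^a u|+|\Omega^2 Z^a u|)\Big)(\gamma(s))\,ds.
\]

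For the first integrand, I would expand $\Box Z^a u$ via \eqref{eqn:high} and the null form structure \eqref{null:sturcture} into trilinear pieces $|Z^b u||\bar\p Z^c u||\p Z^d u|$ (and the symmetric one) with $|b|+|c|+|d|\le|a|$. Since at most one of $|b|,|c|,|d|$ can exceed $N$, one inserts \eqref{BA1}--\eqref{BA3} and \eqref{BA3:improv} on the low-order factors and the $L^2$ estimate \eqref{energy:B} (together with \eqref{Sobo:ineq}) on the remaining factor; changing variables $\tau=2s-t-|x|$ along the ray reduces the integral to weighted integrals $\int\w{\tau}^{-\alpha}d\tau$, producing a contribution of order $(\ve+\ve_1^2)\w{t+|x|}^{-2+O(\ve_2)}$. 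For the second integrand, \eqref{BA3:improv} together with the elementary bound
\[
\int_0^t(t+|x|-s)^{-3/2}\w{2s-t-|x|}^{-\ve_2}ds\ls\w{t+|x|}^{-1/2-\ve_2},\qquad|x|\ge 1+t/2,
\]
gives a contribution of order $(\ve+\ve_1^2)\w{t+|x|}^{6\ve_2-1}$. Recovering $LZ^a u$ via $LV^a=\tfrac12 r^{-1/2}Z^a u+r^{1/2}LZ^a u$, i.e.\ $|LZ^a u|\ls r^{-1/2}|LV^a|+r^{-1}|Z^a u|$, and using $r\approx\w{t+|x|}$ in the region of interest yields the desired bound on $|LZ^a u|$; combining with the angular term from the first step concludes \eqref{BA2:improv}.

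The main obstacle will be the careful $\ve_2$-loss bookkeeping in the incoming-ray integrals so that the final exponent stays within $10\ve_2-3/2$; this is most delicate for the $r^{-3/2}\Omega^2 Z^a u$ piece, where the $\w{t-|x|}^{-\ve_2}$ factor provided by \eqref{BA3:improv} is essential in making the weighted integral above converge at the required rate. A secondary issue is handling the moderately high order $|a|\le 2N-17$ on $\Box Z^a u$: this calls for a case split by which factor in the trilinear expansion carries the most derivatives, together with \eqref{energy:B} and the pointwise bound \eqref{loc:improv1} already derived from it.
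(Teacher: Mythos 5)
Your proposal follows essentially the same route as the paper: the decomposition of $\bar\p$ into the radial null derivative $\p_+=L$ plus an angular term controlled by \eqref{BA3:improv}, the reduction $r^{1/2}Z^au$ integrated along incoming null rays (this is exactly the identity \eqref{BA2:improv1} quoted from \cite{HouYinYuan24}), and the same inputs \eqref{BA1}, \eqref{loc:improv1}, \eqref{BA3:improv} for $\Box Z^au$ in the region $|y|\ge1+s/2$. The only discrepancy is your claimed $\w{t+|x|}^{-2+O(\ve_2)}$ for the source integral, which is too optimistic when the factor of order $>N$ is the one carrying the good derivative (only \eqref{loc:improv1} is then available, yielding $\w{t+|x|}^{10\ve_2-1}$ as in the paper), but this still lands within the target exponent $10\ve_2-3/2$ after dividing by $r^{1/2}$.
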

\begin{proof}
Firstly, recall (6.20) and (6.22) in \cite{HouYinYuan24} (see also Section 4.6 in \cite{Kubo13}) that
\begin{equation}\label{BA2:improv1}
\begin{split}
&\quad\;\bar\p_1=\frac{x_1}{r}\p_+-\frac{x_2}{r^2}\Omega,
\quad\bar\p_2=\frac{x_2}{r}\p_++\frac{x_1}{r^2}\Omega,\quad r=|x|,\\
&\quad\;\p_+(r^{1/2}w)(t,r\frac{x}{|x|})-\p_+(r^{1/2}w)(0,(r+t)\frac{x}{|x|})\\
&=\int_0^t\{(r+t-s)^{1/2}\Box w+(r+t-s)^{-3/2}(w/4+\Omega^2w)\}(s,(r+t-s)\frac{x}{|x|})ds.
\end{split}
\end{equation}
By choosing $w=Z^au^I$ with $|a|\le2N-17$ in \eqref{BA2:improv1},
it follows from \eqref{eqn:high}, \eqref{BA1}, \eqref{loc:improv1} and \eqref{BA3:improv} that for $|y|\ge1+s/2$,
\begin{equation}\label{BA2:improv2}
|\Box Z^au^I(s,y)|\ls\ve_1^3\w{s+|y|}^{7\ve_2-1/2}(1+s)^{3\ve_2}\w{y}^{-1}\w{s-|y|}^{-1-\ve_2}.
\end{equation}
Substituting \eqref{initial:data}, \eqref{BA3:improv} and \eqref{BA2:improv2} into \eqref{BA2:improv1} yields that for $|x|\ge1+t/2$,
\begin{equation}\label{BA2:improv3}
\begin{split}
|\p_+(r^{1/2}Z^au^I)(t,x)|&\ls\ve\w{x}^{-1}+\ve_1^3(r+t)^{10\ve_2-1}\int_0^t(1+|r+t-2s|)^{-1-\ve_2}ds\\
&\quad+(\ve+\ve_1^2)(r+t)^{7\ve_2-1/2}\int_0^t(r+t-s)^{-3/2}ds\\
&\ls(\ve+\ve_1^2)\w{t+|x|}^{10\ve_2-1},
\end{split}
\end{equation}
which leads to
\begin{equation*}
\begin{split}
|\p_+Z^au^I|&\ls r^{-1/2}|\p_+(r^{1/2}Z^au^I)(t,x)|+r^{-1}|Z^au^I(t,x)|\\
&\ls(\ve+\ve_1^2)\w{t+|x|}^{10\ve_2-3/2}.
\end{split}
\end{equation*}
This, together with \eqref{BA3:improv} and \eqref{BA2:improv1}, derives \eqref{BA2:improv}.
\end{proof}

\subsection{Crucial pointwise estimates}

\begin{lemma}\label{lem:4-4}
Under the assumptions of Theorem \ref{thm1}, let $u$ be the solution of \eqref{NLW} and suppose that \eqref{BA1}-\eqref{BA3} hold.
Then
\begin{equation}\label{BA1:improv}
\sum_{|a|\le2N-33}|\p Z^au|\ls(\ve+\ve_1^2)\w{x}^{-1/2}\w{t-|x|}^{-1}(1+t)^{29\ve_2}.
\end{equation}
\end{lemma}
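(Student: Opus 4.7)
The plan is to apply the linear pointwise estimate \eqref{InLW:dpw} to the cutoff function $\tilde Z^a u^I := \chi_{[1/2,1]}(x) Z^a u^I$, which vanishes on $\p\cK$ by construction, thereby reducing the proof of \eqref{BA1:improv} to bounding the nonlinear inhomogeneity on the right-hand side. Since $\tilde Z^a u^I = Z^a u^I$ for $|x|\ge 1$, this controls $|\p Z^a u^I|$ away from the boundary; on the complementary near-boundary region $|x|\le 1$, the decay estimate \eqref{loc:improv} combined with elliptic estimates and Sobolev embedding provides an even stronger bound (of order $(1+t)^{5\ve_2-1}$), which is consistent with the claimed \eqref{BA1:improv}.

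Writing $\Box \tilde Z^a u^I = \Box Z^a u^I + \Box(\tilde Z^a - Z^a) u^I$ with the commutator supported in $\{|x|\le 1\}$, the $L^2(\cK_4)$ contributions on the right-hand side of \eqref{InLW:dpw} (applied with $\mu = \nu = \ve_2$) are handled by \eqref{loc:improv} and \eqref{loc:improv1}. The main task is the weighted pointwise supremum over $(s,y)\in[0,t]\times\overline{\R^2\setminus\cK_2}$ of $|Z^b \Box Z^a u^I|$ for $|b|\le 9$. Using the commutations $[\Box, \p]=0$ and $[\Box, \Omega]=0$ together with \eqref{eqn:high}, this reduces to bounding trilinear products $|Z^{b'}u|\cdot|\cQ(Z^{c'}u, Z^{d'}u)|$ with $|b'|+|c'|+|d'|\le 2N-24$. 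Since $N\ge 42$, at most one of these three indices can exceed $N$, and we split into three subcases accordingly. Applying the null decomposition \eqref{null:sturcture}, I assign the good derivative $\bar\p$ to a factor whose index is $\le N$, so that the bootstrap estimate \eqref{BA2} is directly available; the high-index factor is then controlled by \eqref{BA3:improv} if it is of $|Z^{b'}u|$-type, by \eqref{BA2:improv} in the outer region $|y|\ge 1+s/2$ if it is a good derivative $|\bar\p Z^{c'}u|$, and by \eqref{loc:improv1} (the global Sobolev consequence of the energy estimate \eqref{energy:B}) if it is a full derivative $|\p Z^{c'}u|$. The remaining two low-index factors are handled directly by \eqref{BA1}-\eqref{BA3}.

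The main obstacle is the intermediate region $2\le |y|\le 1+s/2$, where the sharp good-derivative estimate \eqref{BA2:improv} is unavailable and one must instead rely on the cruder bound $|\bar\p Z^{c'}u|\le|\p Z^{c'}u|$ together with \eqref{loc:improv1}, which introduces a growth factor $(1+s)^{3\ve_2}$. Carefully accumulating all the small $\ve_2$ growth exponents arising from the bootstrap and improved estimates in each subcase, and matching them against the spacetime weight $\w{y}^{1/2}\cW_{3/2+\ve_2,1+\ve_2}(s,y)$ — which behaves as $\w{y}^{3/2+\ve_2}\w{s+|y|}^{3/2+\ve_2}$ inside the cone and $\w{y}^{1/2}\w{s+|y|}^{3/2+\ve_2}\w{s-|y|}^{1+\ve_2}$ in the outer region — produces the net loss factor $(1+t)^{29\ve_2}$ on the right-hand side of \eqref{InLW:dpw}. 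Combining this interior bound with the near-boundary argument completes the proof of \eqref{BA1:improv}.
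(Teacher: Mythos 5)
Your overall framework (apply \eqref{InLW:dpw} to $\tilde Z^au^I$, handle the near-boundary region by \eqref{loc:improv}, split the trilinear terms according to which index exceeds $N$, and use the null structure plus the region splitting $|y|\lessgtr 1+s/2$) matches the paper's. But there is a genuine gap in your treatment of the region $|y|\le 1+s/2$ when the high-order index falls on a differentiated factor. There you propose to control the high-order factor $|\p Z^{d}u|$ (or $|\bar\p Z^du|\le|\p Z^du|$) by \eqref{loc:improv1}, i.e.\ by $\ve_1\w{y}^{-1/2}(1+s)^{3\ve_2}$. This bound carries \emph{no} decay in $\w{s-|y|}$, and the weight $\w{y}^{1/2}\cW_{3/2+\ve_2,1+\ve_2}(s,y)$ in \eqref{InLW:dpw} demands it: pairing \eqref{loc:improv1} with \eqref{BA2}--\eqref{BA3} for the two low-order factors gives, against the weight, a quantity of size $\w{y}^{-1/2}\w{s+|y|}^{O(\ve_2)}\bigl(\min\{\w{y},\w{s-|y|}\}\bigr)^{1+\ve_2}$, which in the region $\w{y}\le\w{s-|y|}$, $|y|\sim s$ is of order $(1+t)^{1/2+O(\ve_2)}$. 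So your bookkeeping "accumulating the small $\ve_2$ exponents gives $(1+t)^{29\ve_2}$" is not correct: the loss is a half power of $t$, not a collection of $\ve_2$'s, and the argument does not close as written.

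The paper resolves exactly this point with a two-step bootstrap that your proposal omits. First it proves the intermediate estimate \eqref{BA1:improv1}, namely $\sum_{|a|\le 2N-24}|\p Z^au|\ls(\ve+\ve_1^2)\w{x}^{-1/2}\w{t-|x|}^{-1}(1+t)^{11\ve_2+1/2}$, by running your argument and simply \emph{accepting} the half-power loss. The gain is the factor $\w{t-|x|}^{-1}$ at high order. In the second pass (now for $|a|\le 2N-33$, so that $|b|+|c|+|d|\le 2N-24$ is within reach of \eqref{BA1:improv1}), the high-order derivative factor in the region $|y|\le 1+s/2$ is estimated by \eqref{BA1:improv1} instead of \eqref{loc:improv1}; since $\w{s-|y|}\approx\w{s+|y|}$ there, the extra $\w{s-|y|}^{-1}$ overcompensates the $(1+s)^{1/2}$ loss and leaves only $\ve_2$-losses (this is \eqref{BA1:improv5}), while in the outer region $|y|\ge 1+s/2$ the good-derivative estimate \eqref{BA2:improv} is used as you describe (this is \eqref{BA1:improv6}). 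Without this intermediate step (or some substitute supplying $\w{s-|y|}^{-1}$ decay for the high-order full derivative inside the light cone), the exponent $29\ve_2$ in \eqref{BA1:improv} cannot be reached. A secondary, smaller issue: the null-structure lemma \eqref{null:sturcture} yields the symmetric sum $|\bar\p f||\p g|+|\p f||\bar\p g|$, so you cannot "assign" the good derivative to the low-order factor; both distributions must be estimated, and it is precisely the term with the full derivative on the high-order factor that forces the two-step argument above.
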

\begin{proof}
At first, we show that
\begin{equation}\label{BA1:improv1}
\sum_{|a|\le2N-24}|\p Z^au|\ls(\ve+\ve_1^2)\w{x}^{-1/2}\w{t-|x|}^{-1}(1+t)^{11\ve_2+1/2}.
\end{equation}
Applying \eqref{InLW:dpw} to $\tilde Z^au^I$ for $|a|\le2N-24$ and $\mu=\nu=\ve_2$ yields
\begin{equation}\label{BA1:improv2}
\begin{split}
&\quad\w{x}^{1/2}\w{t-|x|}|\p\tilde Z^au^I|\\
&\ls\ve+\sum_{|b|\le8}\sup_{s\in[0,t]}\w{s}(\|\p^b\Box(\tilde Z^a-Z^a)u^I(s,\cdot)\|_{L^2(\cK_4)}+\|\p^b\Box Z^au^I(s,\cdot)\|_{L^2(\cK_4)})\\
&+\sum_{|b|\le9}\sup_{(s,y)\in[0,t]\times(\overline{\R^2\setminus\cK_2})}\w{y}^{1/2}\cW_{3/2
+\ve_2,1+\ve_2}(s,y)|Z^b\Box Z^au^I(s,y)|,
\end{split}
\end{equation}
where we have used \eqref{initial:data}.
It follows from \eqref{eqn:high} and \eqref{loc:improv} that for $|a|\le2N-24$,
\begin{equation}\label{BA1:improv3}
\sum_{|b|\le8}\sup_{s\in[0,t]}\w{s}(\|\p^b\Box(\tilde Z^a-Z^a)u^I(s,\cdot)\|_{L^2(\cK_4)}
+\|\p^b\Box Z^au^I(s,\cdot)\|_{L^2(\cK_4)})
\ls(\ve+\ve_1^2)(1+t)^{5\ve_2}.
\end{equation}
On the other hand, collecting \eqref{eqn:high}, \eqref{BA1}, \eqref{loc:improv1} and \eqref{BA3:improv} leads to
\begin{equation}\label{BA1:improv4}
\begin{split}
\sum_{|a|\le2N-24}\sum_{|b|\le9}|Z^b\Box Z^au^I(s,y)|&\ls\sum_{|b|+|c|+|d|\le2N-15}|Z^bu||\p Z^cu||\p Z^du|\\
&\ls\ve_1^3\w{s+|y|}^{7\ve_2-1/2}(1+s)^{3\ve_2}\w{y}^{-1}\w{s-|y|}^{-1-\ve_2}.
\end{split}
\end{equation}
Substituting \eqref{BA1:improv3} and \eqref{BA1:improv4} into \eqref{BA1:improv2} shows that
\begin{equation*}
\sum_{|a|\le2N-24}|\p\tilde Z^au^I(t,x)|\ls(\ve+\ve_1^2)\w{x}^{-1/2}\w{t-|x|}^{-1}(1+t)^{11\ve_2+1/2}.
\end{equation*}
This, together with \eqref{loc:improv}, yields \eqref{BA1:improv1}.

Next, we prove \eqref{BA1:improv} by use of \eqref{BA1:improv1}.
By applying \eqref{InLW:dpw} to $\tilde Z^au^I$ again for $|a|\le2N-33$,
we start to improve the estimate \eqref{BA1:improv4}.

In the region $|y|\le1+s/2$, from \eqref{eqn:high}, \eqref{BA1}, \eqref{BA3:improv} and \eqref{BA1:improv1}, one has
\begin{equation}\label{BA1:improv5}
\begin{split}
\sum_{|a|\le2N-24}|\Box Z^a u^I(s,y)|&\ls\sum_{\substack{|b|+|c|+|d|\le2N-24}}|Z^bu||\p Z^cu||\p Z^du|\\
&\ls\ve_1^3\w{s+|y|}^{7\ve_2-1/2}(1+s)^{11\ve_2+1/2}\w{y}^{-1}\w{s-|y|}^{-2-\ve_2}.
\end{split}
\end{equation}

In the region $|y|\ge1+s/2$, it can be concluded from \eqref{eqn:high}, \eqref{null:sturcture}, \eqref{BA3:improv}, \eqref{BA2:improv} and \eqref{BA1:improv1} that
\begin{equation}\label{BA1:improv6}
\begin{split}
\sum_{|a|\le2N-24}|\Box Z^au^I(s,y)|&\ls\sum_{\substack{|b|+|c|+|d|\le2N-24}}|Z^bu||\bar\p Z^cu||\p Z^du|\\
&\ls\ve_1^3\w{s+|y|}^{17\ve_2-2}(1+s)^{11\ve_2+1/2}\w{y}^{-1/2}\w{s-|y|}^{-1-\ve_2}.
\end{split}
\end{equation}
Collecting \eqref{BA1:improv5} and \eqref{BA1:improv6} yields
\begin{equation*}
\begin{split}
\sum_{|a|\le2N-33}\sum_{|b|\le9}\sup_{(s,y)\in[0,t]\times(\overline{\R^2\setminus\cK_2})}\w{y}^{1/2}\cW_{3/2
+\ve_2,1+\ve_2}(s,y)|Z^b\Box Z^au^I(s,y)|\ls\ve_1^3(1+t)^{29\ve_2}.
\end{split}
\end{equation*}
This, together with \eqref{loc:improv} and \eqref{BA1:improv3}, implies \eqref{BA1:improv}.
\end{proof}

\begin{lemma}\label{lem:4-5}
Under the assumptions of Theorem \ref{thm1}, let $u$ be the solution of \eqref{NLW} and suppose that \eqref{BA1}-\eqref{BA3} hold.
Then we have
\begin{equation}\label{loc:impr:A}
\sum_{|a|\le2N-27}\|\p^au\|_{L^2(\cK_R)}\ls(\ve+\ve_1^2)(1+t)^{-1}.
\end{equation}
\end{lemma}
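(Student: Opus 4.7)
The plan is to follow the same two-step template as in the proof of \eqref{loc:improv}: first obtain a pointwise local $L^\infty(\cK_R)$ decay of the form $(\ve+\ve_1^2)(1+t)^{-1}$ for the pure time derivatives $\p_t^{j+1}u$, and then convert this into the desired full-derivative $\p^a u$ bound via the elliptic-regularity / boundary integration-by-parts scheme already used in the proof of \eqref{loc:energy}. The key new ingredient, which is what will upgrade the decay rate from the $(1+t)^{5\ve_2-1}$ of \eqref{loc:improv} to the sharp $(1+t)^{-1}$ here, is the two-stage pointwise bound for $\p Z^au$ supplied by \eqref{BA1:improv1} (valid up to $|a|\le 2N-24$) and the sharper \eqref{BA1:improv} (valid up to $|a|\le 2N-33$); both carry the $\w{t-|x|}^{-1}$ factor that is absent from the weaker energy-Sobolev bound \eqref{loc:improv1}.

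For the pointwise step, I would fix $I$ and $j\le 2N-27$ and apply the $\ln$-free local pointwise estimate \eqref{pw:crit:dt'} (with $\mu=\nu=\ve_2$) to $w=\p_t^ju^I$. Its source $\Box\p_t^ju^I=\p_t^j\sum_{J,K,L}C_{IJKL}u^J\cQ(u^K,u^L)$ is, after Leibniz expansion and the null-form inequality \eqref{null:sturcture}, a sum of terms dominated by $|Z^{b'}u||\p Z^{c'}u||\bar\p Z^{d'}u|$ with $|b'|+|c'|+|d'|\le 2N-24$. By the null-form symmetry $(c',d')\leftrightarrow(d',c')$ one may assume $|c'|\le |d'|$, so that $|c'|\le(2N-24)/2\le 2N-33$ (using $N\ge42$) and \eqref{BA1:improv} applies to $\p Z^{c'}u$, supplying the critical $\w{t-|x|}^{-1}$ factor. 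The companion factor $\bar\p Z^{d'}u$ is to be estimated by \eqref{BA2:improv} in the far field $|y|\ge 1+s/2$; in the interior it will be bounded by $|\p Z^{d'}u|$ and then by either \eqref{BA1:improv} (if $|d'|\le 2N-33$) or \eqref{BA1:improv1} (if $|d'|\in(2N-33,2N-24]$), both of which again provide a $\w{t-|x|}^{-1}$ factor. The third factor $Z^{b'}u$ is bounded by \eqref{BA3:improv}. A direct case-by-case check in the far field, the intermediate annulus $R_0<|y|<1+s/2$, and the near region $\cK_{R_0}$ (where I may additionally invoke the Sobolev consequence of \eqref{loc:improv}) bounds the weighted $L^\infty$ and $L^2(\cK_3)$ pieces on the right-hand side of \eqref{pw:crit:dt'} uniformly by $\ve+\ve_1^2$. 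This, combined with \eqref{initial:data}, will yield
\[\sum_{j\le 2N-27}\|\p_t\p_t^ju\|_{L^\infty(\cK_R)}\ls(\ve+\ve_1^2)(1+t)^{-1},\]
and hence the same bound in $L^2(\cK_R)$ since $\cK_R$ has finite measure.

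The elliptic-regularity step will proceed exactly as in the proof of \eqref{loc:energy}: setting $E_j^{loc}(t):=\sum_{k\le j}\sum_I\|\p_t^k\p_x^{2N-27-j}u^I\|_{L^2(\cK_{R+j})}$ for $0\le j\le 2N-27$, I would run the descending induction $E_j^{loc}\ls E_{j+1}^{loc}+(\ve+\ve_1^2)(1+t)^{-1}$ using $\Delta=\p_t^2-\Box$, the elliptic estimate \eqref{ellip}, and the PDE \eqref{NLW} to re-express $\Box\p_t^ku^I$ through nonlinear products already handled above; the top rung $E_{2N-27}^{loc}$ is to be controlled by the previous step together with \eqref{u:loc:pw} and the boundary integration-by-parts computation \eqref{loc:energy5}-\eqref{loc:energy6}. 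Summing in $j$ then produces \eqref{loc:impr:A}. The hardest part will be controlling the weighted $L^\infty$ supremum in the intermediate annulus $R_0<|y|<1+s/2$, where neither \eqref{BA2:improv} nor the Sobolev corollary of \eqref{loc:improv} covers the large-derivative factor $\bar\p Z^{d'}u$; the resolution relies on the combinatorial observation that at most one of $(c',d')$ can exceed $2N-33$, so two of the three factors always retain the $\w{t-|x|}^{-1}$ decay from the two-stage pointwise bound of Lemma \ref{lem:4-4}, and this pair of good factors exactly absorbs the polynomial weight $\cW_{1+2\ve_2,1}$ together with the slow $(1+t)^{O(\ve_2)}$ losses intrinsic to \eqref{BA1:improv1}-\eqref{BA1:improv}.
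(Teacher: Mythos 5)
Your proposal is correct and follows essentially the same route as the paper: apply \eqref{pw:crit:dt'} to $\p_t^ju^I$ for $j\le 2N-27$, bound the cubic source exactly as in the estimates \eqref{BA1:improv5}--\eqref{BA1:improv6} established in the proof of Lemma \ref{lem:4-4} (your pigeonhole on $(c',d')$ and the use of \eqref{BA1:improv}/\eqref{BA1:improv1}, \eqref{BA2:improv}, \eqref{BA3:improv} reproduces exactly that splitting into $|y|\le 1+s/2$ and $|y|\ge 1+s/2$), and then run the elliptic-regularity and boundary integration-by-parts scheme of Section \ref{sect:LocEnergy} together with \eqref{u:loc:pw} to pass from the $L^\infty(\cK_{R_1})$ bound on $\p_t\p_t^ju$ to the full $\sum_{|a|\le 2N-27}\|\p^au\|_{L^2(\cK_R)}$ bound. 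The only cosmetic difference is that the paper uses \eqref{BA1} for the lower-order derivative factor where you use \eqref{BA1:improv}; both supply the needed $\w{t-|x|}^{-1}$ factor and the extra $(1+t)^{O(\ve_2)}$ loss is harmless.
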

\begin{proof}
Applying \eqref{pw:crit:dt'} to $\p_t^ju^I$ together with $j\le2N-27$, $\mu=\nu=\ve_2$, \eqref{initial:data},
\eqref{eqn:high} and \eqref{loc:improv}
leads to that for $R_1>1$,
\begin{equation}\label{loc:impr:A1}
\begin{split}
&\quad\w{t}\sum_{j\le2N-27}\|\p_t\p_t^ju^I\|_{L^\infty(\cK_{R_1})}\\
&\ls\ve+\ve_1^2+\sum_{|a|\le2N-24}\sup_{(s,y)\in[0,t]\times\cK}\w{y}^{1/2}\cW_{1+2\ve_2,1}(s,y)|\Box Z^au(s,y)|.
\end{split}
\end{equation}
Substituting \eqref{BA1:improv5} and \eqref{BA1:improv6} into \eqref{loc:impr:A1} yields
\begin{equation}\label{loc:impr:A2}
\sum_{j\le2N-27}\|\p_t\p_t^ju\|_{L^\infty(\cK_{R_1})}\ls(\ve+\ve_1^2)(1+t)^{-1}.
\end{equation}
Therefore, \eqref{loc:impr:A} can be achieved by the same method as in Section \ref{sect:LocEnergy} together with \eqref{u:loc:pw} and \eqref{loc:impr:A2}.
\end{proof}

\begin{lemma}[Improvement of \eqref{BA1}]
Under the assumptions of Theorem \ref{thm1}, let $u$ be the solution of \eqref{NLW} and suppose that \eqref{BA1}-\eqref{BA3} hold.
Then
\begin{equation}\label{BA1:impr}
\sum_{|a|\le2N-42}|\p Z^au|\ls(\ve+\ve_1^2)\w{x}^{-1/2}\w{t-|x|}^{-1}.
\end{equation}
\end{lemma}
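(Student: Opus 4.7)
My plan is to iterate the argument of Lemma \ref{lem:4-4} once more, this time feeding in the stronger local-energy decay \eqref{loc:impr:A} (which gains $(1+t)^{-1}$ in place of $(1+t)^{5\ve_2-1}$) together with the improved pointwise bound \eqref{BA1:improv} (with exponent $29\ve_2$ rather than $11\ve_2+1/2$). Both inputs are genuine gains over those used in the proof of \eqref{BA1:improv}, so a single additional pass should be enough to absorb the residual $(1+t)^{29\ve_2}$ loss and remove the growth factor entirely.

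Concretely, I would apply the linear-wave pointwise estimate \eqref{InLW:dpw} to $\tilde Z^au^I$ with $\tilde Z$ as in \eqref{BA3:improv1}, $|a|\le 2N-42$, and $\mu=\nu=\ve_2$. The commutator $(\tilde Z^a-Z^a)u^I$ is supported in $\{|x|\le 1\}$ and can be absorbed into the $L^2(\cK_4)$ piece. The two main contributions on the RHS are (i) $\sup_{s\in[0,t]}\w{s}\|\p^b\Box Z^au^I(s,\cdot)\|_{L^2(\cK_4)}$ for $|b|\le 8$, and (ii) the $\w{y}^{1/2}\cW_{3/2+\ve_2,1+\ve_2}(s,y)$-weighted pointwise supremum of $|Z^b\Box Z^au^I(s,y)|$ for $|b|\le 9$. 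For (i), expanding the cubic nonlinearity via \eqref{eqn:high} and combining \eqref{u:loc:pw}, \eqref{loc:impr:A}, and \eqref{BA1:improv} shows that $\|\p^b\Box Z^au^I\|_{L^2(\cK_4)}$ decays faster than $(1+s)^{-1}$, so that the $\w{s}$-weighted supremum is uniformly bounded in $t$.

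For (ii) I would split the $(s,y)$-supremum into the interior region $|y|\le 1+s/2$ and the exterior region $|y|\ge 1+s/2$. In the interior I use the raw cubic bound $|Z^bu|\,|\p Z^cu|\,|\p Z^du|$, controlling $|Z^bu|$ by \eqref{BA3:improv} and each $|\p Z^cu|$-factor by \eqref{BA1:improv}; since $\w{s-|y|}\gtrsim\w{s+|y|}\approx 1+s$ here and $\min\{\w{y},\w{s-|y|}\}\le\w{y}$, the $(1+s)^{58\ve_2}$ loss from squaring \eqref{BA1:improv} is beaten by the negative powers of $\w{s+|y|}$ produced by the $\cW$-weight, leaving something uniformly bounded. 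In the exterior I instead use the null-form estimate $|Z^bu|\,|\bar\p Z^cu|\,|\p Z^du|$ with the sharper exterior bound \eqref{BA2:improv}; since $\w{y}\approx\w{s+|y|}$ there, the choice $\min\{\w{y},\w{s-|y|}\}\le\w{s-|y|}$ cancels the singular $\w{s-|y|}^{-1-\ve_2}$ from the nonlinearity, and the remaining exponent of $\w{s+|y|}$ is $47\ve_2-1/2<0$, which is bounded. Combining (i) and (ii), and passing from $\tilde Z^au^I$ back to $Z^au^I$ on $\cK_1$ via Sobolev embedding together with \eqref{loc:impr:A}, yields \eqref{BA1:impr}. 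The step I expect to need the most care is the bookkeeping of $\w{s+|y|}$-exponents in the exterior region, where all three pointwise inputs contribute positive $\ve_2$-powers and must be outweighed by the negative contributions from the $\cW$-weight; confirming this is the key point that makes the growth factor disappear in this second iteration.
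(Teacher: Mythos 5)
Your proposal is correct and follows essentially the same route as the paper: apply \eqref{InLW:dpw} to $\tilde Z^a u^I$ with $\mu=\nu=\ve_2$, control the $L^2(\cK_4)$ piece via the improved local energy \eqref{loc:impr:A}, and re-run the interior/exterior pointwise estimates of Lemma \ref{lem:4-4} with \eqref{BA1:improv} in place of \eqref{BA1:improv1}, checking that the residual $\ve_2$-powers are absorbed by the $\cW$-weight. The only deviation is a harmless bookkeeping inefficiency in the interior region: you bound both derivative factors by \eqref{BA1:improv} (giving $(1+s)^{58\ve_2}$), whereas the paper bounds only the high-index factor by \eqref{BA1:improv} and the low-index one by the bootstrap bound \eqref{BA1} (giving $(1+s)^{29\ve_2}$, as in \eqref{BA1:impr3}); since $66\ve_2-1/2<0$ in your tally, the conclusion is unaffected.
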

\begin{proof}
Utilizing \eqref{InLW:dpw} to $\tilde Z^au^I$ for $|a|\le2N-42$ and $\mu=\nu=\ve_2$ yields
\begin{equation}\label{BA1:impr1}
\begin{split}
&\quad\w{x}^{1/2}\w{t-|x|}|\p\tilde Z^au^I|\\
&\ls\ve+\sum_{|b|\le8}\sup_{s\in[0,t]}\w{s}(\|\p^b\Box(\tilde Z^a-Z^a)u^I(s,\cdot)\|_{L^2(\cK_4)}
+\|\p^b\Box Z^au^I(s,\cdot)\|_{L^2(\cK_4)})\\
&+\sum_{|b|\le9}\sup_{(s,y)\in[0,t]\times(\overline{\R^2\setminus\cK_2})}\w{y}^{1/2}\cW_{3/2
+\ve_2,1+\ve_2}(s,y)|Z^b\Box Z^au^I(s,y)|,
\end{split}
\end{equation}
where we have used \eqref{initial:data}.
By \eqref{loc:impr:A}, one obtains that for $|a|\le2N-42$,
\begin{equation}\label{BA1:impr2}
\begin{split}
\sum_{|b|\le8}\sup_{s\in[0,t]}\w{s}(\|\p^b\Box(\tilde Z^a-Z^a)u^I(s,\cdot)\|_{L^2(\cK_4)}
+\|\p^b\Box Z^au^I(s,\cdot)\|_{L^2(\cK_4)})\ls\ve+\ve_1^2.
\end{split}
\end{equation}
With \eqref{BA1:improv} instead of \eqref{BA1:improv1}, \eqref{BA1:improv5} and \eqref{BA1:improv6} can be improved to
that in the region $|y|\le1+s/2$
\begin{equation}\label{BA1:impr3}
\begin{split}
\sum_{|a|\le2N-33}|\Box Z^a u^I(s,y)|&\ls\sum_{\substack{|b|+|c|+|d|\le2N-33}}|Z^bu||\p Z^cu||\p Z^du|\\
&\ls\ve_1^3\w{s+|y|}^{7\ve_2-1/2}(1+s)^{29\ve_2}\w{y}^{-1}\w{s-|y|}^{-2-\ve_2};
\end{split}
\end{equation}
in the region $|y|\ge1+s/2$,
\begin{equation}\label{BA1:impr4}
\begin{split}
\sum_{|a|\le2N-33}|\Box Z^au^I(s,y)|&\ls\sum_{\substack{|b|+|c|+|d|\le2N-33}}|Z^bu||\bar\p Z^cu||\p Z^du|\\
&\ls\ve_1^3\w{s+|y|}^{17\ve_2-2}(1+s)^{29\ve_2}\w{y}^{-1/2}\w{s-|y|}^{-1-\ve_2}.
\end{split}
\end{equation}
Collecting \eqref{BA1:impr1}-\eqref{BA1:impr4} with \eqref{loc:impr:A} leads to \eqref{BA1:impr}.
\end{proof}

\begin{lemma}[Improvement of \eqref{BA3}]
Under the assumptions of Theorem \ref{thm1}, let $u$ be the solution of \eqref{NLW} and suppose that \eqref{BA1}-\eqref{BA3} hold.
Then
\begin{equation}\label{BA3:impr}
\sum_{|a|\le2N-29}|Z^au|\ls(\ve+\ve_1^2)\w{t+|x|}^{\ve_2-1/2}.
\end{equation}
\end{lemma}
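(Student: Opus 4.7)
The plan is to apply the interior pointwise estimate \eqref{InLW:pw} to $\tilde Z^a u^I$ for $|a|\le 2N-29$ with $\mu=\nu=\ve_2$, mimicking the argument used to establish the rough bound \eqref{BA3:improv}, but now substituting the sharpened estimates \eqref{BA1:impr}, \eqref{BA2:improv}, \eqref{BA3:improv} and the improved local energy decay \eqref{loc:impr:A} for the rough inputs used previously. Since $\supp_x(\tilde Z^a-Z^a)u^I\subset\{|x|\le 1\}$ and $\tilde Z=Z$ for $|x|\ge 1$, one writes $\Box\tilde Z^au^I=\Box Z^au^I+\Box(\tilde Z^a-Z^a)u^I$, reduces the commutator term to local quantities controlled by \eqref{loc:impr:A}, and eventually transfers the bound back to $Z^au^I$ using \eqref{loc:impr:A} and the Sobolev embedding \eqref{Sobo:ineq} in the small region $|x|\le 1$.

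For the local $L^2$ contribution $\sum_{|b|\le 4}\w{s}^{1/2+\ve_2}\|\p^b\Box Z^au^I(s,\cdot)\|_{L^2(\cK_4)}$, expand via \eqref{eqn:high} into sums of cubic terms $Z^{b'}u\cdot\p Z^{c'}u\cdot\p Z^{d'}u$. In $\cK_4$ one has $\w{s-|y|}\approx\w{s}$, so \eqref{BA1:impr} gives one derivative factor the pointwise bound $(\ve+\ve_1^2)(1+s)^{-1}$, \eqref{BA3:improv} gives the undifferentiated factor the bound $(\ve+\ve_1^2)(1+s)^{7\ve_2-1/2}$, and \eqref{loc:impr:A} bounds the remaining derivative factor in $L^2(\cK_4)$ by $(\ve+\ve_1^2)(1+s)^{-1}$. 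The product lies in $(\ve+\ve_1^2)^3(1+s)^{8\ve_2-2}$ after including the weight $\w{s}^{1/2+\ve_2}$, which is integrable and bounded by $\ve+\ve_1^2$.

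For the pointwise spacetime contribution $\sum_{|b|\le 5}\w{y}^{1/2}\cW_{1+\ve_2,1+\ve_2}(s,y)|\p^b\Box Z^au^I(s,y)|$, split the $(s,y)$ region. In the interior $|y|\le 1+s/2$ the null structure yields no gain, so one uses \eqref{BA1:impr} for both derivative factors and \eqref{BA3:improv} for the undifferentiated factor, obtaining $|\p^b\Box Z^au^I|\ls\ve_1^3\w{y}^{-1}\w{s+|y|}^{6\ve_2-5/2}$; after multiplying by the weight $\w{y}^{1/2}\cW_{1+\ve_2,1+\ve_2}=\w{y}^{1/2}\w{s+|y|}^{1+\ve_2}\w{y}^{1+\ve_2}$ one gets $\ls\ve_1^3\w{s+|y|}^{8\ve_2-1}$. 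In the exterior $|y|\ge 1+s/2$, the null form identity \eqref{null:sturcture} replaces one $\p Z^{c'}u$ by $\bar\p Z^{c'}u$, so \eqref{BA2:improv} provides the sharp decay $(\ve+\ve_1^2)\w{s+|y|}^{10\ve_2-3/2}$; combined with \eqref{BA1:impr} and \eqref{BA3:improv} this yields $|\p^b\Box Z^au^I|\ls\ve_1^3\w{y}^{-1/2}\w{s+|y|}^{17\ve_2-2}\w{s-|y|}^{-1-\ve_2}$, and the weight (with $\min(\w{y},\w{s-|y|})=\w{s-|y|}$ here) produces $\ls\ve_1^3\w{s+|y|}^{18\ve_2-1}$. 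Both regimes are uniformly bounded by $\ve+\ve_1^2$. A standard pigeonhole based on $|b'|+|c'|+|d'|\le 2N-24$ and $N\ge 42$ guarantees that in every Leibniz term the derivative counts fall within the allowed ranges of \eqref{BA1:impr}, \eqref{BA2:improv}, \eqref{BA3:improv}.

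Collecting these inputs in \eqref{InLW:pw} gives $\frac{\w{t+|x|}^{1/2}\w{t-|x|}^{\ve_2}}{\ln^2(2+t+|x|)}|\tilde Z^au^I|\ls\ve+\ve_1^2$, and since $\w{t-|x|}^{-\ve_2}\le 1$ and $\ln^2(2+t+|x|)\ls\w{t+|x|}^{\ve_2}$, one concludes $|\tilde Z^au^I|\ls(\ve+\ve_1^2)\w{t+|x|}^{\ve_2-1/2}$, after which \eqref{loc:impr:A} and \eqref{Sobo:ineq} extend the estimate across $|x|\le 1$ to yield \eqref{BA3:impr}. The main obstacle is the interior region $|y|\le 1+s/2$, where the null structure is unavailable; the key insight that saves the argument is that inside the light cone $\w{s-|y|}\approx\w{s+|y|}$, so the strong $\w{t-|x|}^{-1}$ decay from the improved gradient bound \eqref{BA1:impr} actually converts into genuine time decay $\w{s+|y|}^{-1}$ on each derivative factor, providing two extra powers of $\w{s+|y|}^{-1}$ that precisely compensate the $\w{s+|y|}^{7\ve_2-1/2}$ growth of $|Z^{b'}u|$ from \eqref{BA3:improv} and the weight $\w{s+|y|}^{1+\ve_2}$.
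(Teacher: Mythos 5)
Your overall architecture coincides with the paper's: apply \eqref{InLW:pw} to $\tilde Z^au^I$ with $\mu=\nu=\ve_2$, handle the commutator and the $L^2(\cK_4)$ source term by local energy decay, and split the pointwise source term into $|y|\le 1+s/2$ and $|y|\ge 1+s/2$. However, there is a genuine gap in your pigeonhole claim. After applying $\p^b$ with $|b|\le 5$ to $\Box Z^au^I$ for $|a|\le 2N-29$, the cubic terms $|Z^{b'}u||\p Z^{c'}u||\p Z^{d'}u|$ carry a total of up to $2N-24$ vector fields, so one of the differentiated factors can have index as large as $2N-24$. The estimate \eqref{BA1:impr} is only available for $|a|\le 2N-42$, and even the intermediate \eqref{BA1:improv} only reaches $2N-33$; neither covers $2N-24$. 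Hence your assertion that ``in every Leibniz term the derivative counts fall within the allowed ranges of \eqref{BA1:impr}'' is false, and the step ``use \eqref{BA1:impr} for both derivative factors'' breaks down precisely on the terms where almost all derivatives land on a single $\p Z^{d'}u$. The only pointwise bound with $\w{s-|y|}^{-1}$ decay valid up to index $2N-24$ is \eqref{BA1:improv1}, which costs an extra factor $(1+s)^{11\ve_2+1/2}$. This is exactly why the paper routes the argument through the previously established source bounds \eqref{BA1:improv5}--\eqref{BA1:improv6} (built from \eqref{BA1} on the low factor and \eqref{BA1:improv1} on the high factor) rather than re-deriving the source term with the sharp \eqref{BA1:impr}. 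The argument still closes after this correction, because in the interior region the product $\w{s-|y|}^{-2-\ve_2}$ (one power from the low factor, one from \eqref{BA1:improv1}) together with the weight $\w{y}^{1/2}\cW_{1+2\ve_2,1}$ absorbs the $(1+s)^{11\ve_2+1/2}$ loss, and similarly in the exterior region; so your quantitative claims are too optimistic but the final boundedness survives. Your ``key insight'' about converting $\w{t-|x|}^{-1}$ into time decay inside the cone is correct in spirit, but only one of the two powers can come from \eqref{BA1:impr}.

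A secondary, more easily repaired slip: for the $L^2(\cK_4)$ term and the commutator $\Box(\tilde Z^a-Z^a)u^I$ you invoke \eqref{loc:impr:A}, whose range is $|a|\le 2N-27$, whereas these terms involve up to roughly $2N-23$ derivatives of $u$ near the obstacle. The paper instead uses \eqref{loc:improv} (range $2N-13$, decay $(1+t)^{5\ve_2-1}$), which suffices since $\w{s}^{1/2+\ve_2}$ times the resulting cubic bound is still uniformly bounded. You should make the same substitution.
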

\begin{proof}
Utilizing \eqref{InLW:pw} to $\tilde Z^au^I$ for $|a|\le2N-29$, $\mu=\nu=\ve_2$ with \eqref{initial:data} and \eqref{loc:improv} yields
\begin{equation*}
\begin{split}
&\frac{\w{t+|x|}^{1/2}\w{t-|x|}^{\ve_2}}{\ln^2(2+t+|x|)}|\tilde Z^au^I(t,x)|\\
&\ls\ve+\ve_1^2+\sum_{|b|\le5}\sup_{(s,y)\in[0,t]\times(\overline{\R^2\setminus\cK_2})}\w{y}^{1/2}\cW_{1+2\ve_2,1}(s,y)|\p^b\Box Z^au^I(s,y)|.
\end{split}
\end{equation*}
This, together with \eqref{loc:improv}, \eqref{BA1:improv5} and \eqref{BA1:improv6}, implies \eqref{BA3:impr}.
\end{proof}

\begin{lemma}[Improvement of \eqref{BA2}]\label{lem:4-8}
Under the assumptions of Theorem \ref{thm1}, let $u$ be the solution of \eqref{NLW} and suppose that \eqref{BA1}-\eqref{BA3} hold.
Then
\begin{equation}\label{BA2:impr}
\sum_{|a|\le2N-42}|\bar\p Z^au|\ls(\ve+\ve_1^2)\w{x}^{-1/2}\w{t+|x|}^{\ve_2-1}.
\end{equation}
\end{lemma}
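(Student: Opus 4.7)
The plan is to run exactly the same representation-along-characteristics scheme as in the proof of \eqref{BA2:improv}, but now fed with the freshly strengthened bounds \eqref{BA1:impr}, \eqref{BA3:impr}, and \eqref{loc:impr:A}, which no longer carry any positive $(1+t)^{C\ve_2}$ growth. I would split the argument by the regions $|x|\le 1+t/2$ and $|x|\ge 1+t/2$.

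In the interior region $|x|\le 1+t/2$ one has $\w{t-|x|}\gt\w{t+|x|}$, so the trivial comparison $|\bar\p Z^au|\le|\p Z^au|$ together with \eqref{BA1:impr} immediately yields $|\bar\p Z^au|\ls(\ve+\ve_1^2)\w{x}^{-1/2}\w{t+|x|}^{-1}$, which is stronger than the target.

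In the exterior region $|x|\ge 1+t/2$, I would use the decomposition \eqref{BA2:improv1} to reduce the task to bounding $\p_+(r^{1/2}Z^au^I)(t,x)$ and $r^{-2}\Omega Z^au^I$, with $r=|x|$. The term $r^{-2}\Omega Z^au^I$ is absorbed directly via \eqref{BA3:impr} together with $r\approx\w{x}\approx\w{t+|x|}$. For $\p_+(r^{1/2}Z^au^I)$, the fundamental-theorem identity in \eqref{BA2:improv1} along the backward characteristic $y_s=(r+t-s)x/|x|$ splits into three pieces: a compactly supported initial-data term supported in $\{r+t\le M_0\}$ and bounded by $\ls\ve$ there; a lower-order integral $\int_0^t(r+t-s)^{-3/2}(Z^au^I/4+\Omega^2 Z^au^I)(s,y_s)\,ds$, handled with \eqref{BA3:impr} and $\int_0^t(r+t-s)^{-3/2}ds\ls r^{-1/2}$; and the principal nonlinear integral $I_1:=\int_0^t(r+t-s)^{1/2}\Box Z^au^I(s,y_s)\,ds$. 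After dividing by $r^{1/2}$, both non-$I_1$ contributions fit into $(\ve+\ve_1^2)\w{x}^{-1/2}\w{t+|x|}^{\ve_2-1}$.

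The main obstacle is $I_1$. The crucial geometric observation is that for $r\ge 1+t/2$ the characteristic satisfies $|y_s|=r+t-s\ge 1+s/2$ for every $s\in[0,t]$, so the exterior bound \eqref{BA2:improv} is available along the whole of $y_s$. Applying the null structure \eqref{null:sturcture} to expand $\Box Z^au^I$ as $\sum|Z^bu||\bar\p Z^cu||\p Z^du|$ with $|b|+|c|+|d|\le|a|\le 2N-42$, each factor lies within the validity ranges of \eqref{BA3:impr}, \eqref{BA2:improv} and \eqref{BA1:impr} respectively, which would give
\begin{equation*}
|\Box Z^au^I(s,y_s)|\ls (\ve+\ve_1^2)^3\w{r+t}^{11\ve_2-3}\w{r+t-2s}^{-1}.
\end{equation*}
Integrating against $(r+t-s)^{1/2}$ produces at worst a logarithmic factor from $\w{r+t-2s}^{-1}$, and after dividing by $r^{1/2}\approx\w{x}^{1/2}$ this yields $\ls(\ve+\ve_1^2)\w{x}^{-1/2}\w{t+|x|}^{11\ve_2-5/2}\log\w{t+|x|}$, strictly stronger than the target $\w{x}^{-1/2}\w{t+|x|}^{\ve_2-1}$ since $\ve_2=10^{-3}$. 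Combining all contributions yields \eqref{BA2:impr}.
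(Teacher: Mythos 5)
Your argument is correct and follows essentially the same route as the paper: the same split into $|x|\le1+t/2$ (where \eqref{BA1:impr} trivially dominates) and $|x|\ge1+t/2$, and the same characteristic identity \eqref{BA2:improv1}; the only difference is that for the principal source integral the paper bounds $\Box Z^au^I$ crudely by $|Z^bu||\p Z^cu||\p Z^du|$ using \eqref{BA3:impr} and \eqref{BA1:impr} twice (giving $\w{s-|y|}^{-2}$ and hence no logarithm), whereas you invoke the null structure together with \eqref{BA2:improv}, which works equally well. One harmless arithmetic slip: multiplying $\w{s+|y|}^{\ve_2-1/2}\cdot\w{s+|y|}^{10\ve_2-3/2}\cdot\w{y}^{-1/2}\w{s-|y|}^{-1}$ along the characteristic gives $\w{r+t}^{11\ve_2-5/2}\w{r+t-2s}^{-1}$, not $\w{r+t}^{11\ve_2-3}\w{r+t-2s}^{-1}$, but the resulting bound is still far stronger than the target $\w{x}^{-1/2}\w{t+|x|}^{\ve_2-1}$.
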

\begin{proof}
In the region $|x|\le1+t/2$, \eqref{BA1:impr} leads to \eqref{BA2:impr}.
The proof of \eqref{BA2:impr} in the region $|x|\ge1+t/2$ is similar to that of \eqref{BA2:improv} with a
better estimate than \eqref{BA2:improv2}.
Indeed, it holds that
\begin{equation*}
\sum_{|a|\le2N-42}|\Box Z^au^I(s,y)|\ls\ve_1^3\w{s+|y|}^{\ve_2-1/2}\w{y}^{-1}\w{s-|y|}^{-2},
\end{equation*}
where \eqref{BA1:impr} and \eqref{BA3:impr} are used.
Therefore, \eqref{BA2:improv3} can be also improved to
\begin{equation*}
\begin{split}
|\p_+(r^{1/2}Z^au^I)(t,x)|&\ls\ve\w{x}^{-1}+\ve_1^3(r+t)^{\ve_2-1}\int_0^t(1+|r+t-2s|)^{-2}ds\\
&\quad+(\ve+\ve_1^2)(r+t)^{\ve_2-1/2}\int_0^t(r+t-s)^{-3/2}ds\\
&\ls(\ve+\ve_1^2)\w{t+|x|}^{\ve_2-1}.
\end{split}
\end{equation*}
Together with \eqref{BA2:improv1}, this yields \eqref{BA2:impr}.
\end{proof}

\subsection{Proof of Theorem \ref{thm1}}
\begin{proof}[Proof of Theorem \ref{thm1}]
Collecting \eqref{BA1:impr}, \eqref{BA3:impr} and \eqref{BA2:impr}, we know that there is a constant $C_1\ge1$ such that
\begin{align*}
&\sum_{|a|\le2N-42}|\p Z^au|\le C_1(\ve+\ve_1^2)\w{x}^{-1/2}\w{t-|x|}^{-1},\\
&\sum_{|a|\le2N-42}|\bar\p Z^au|\le C_1(\ve+\ve_1^2)\w{x}^{-1/2}\w{t+|x|}^{\ve_2-1},\\
&\sum_{|a|\le2N-29}|Z^au|\le C_1(\ve+\ve_1^2)\w{t+|x|}^{\ve_2-1/2}.
\end{align*}
By choosing $\ve_1=4C_1\ve,\ve_0=\frac{1}{16C_1^2}$, then for $N\ge42$, \eqref{BA1}-\eqref{BA3} can be improved to
\begin{align*}
&\sum_{|a|\le N}|\p Z^au|\le\frac{\ve_1}{2}\w{x}^{-1/2}\w{t-|x|}^{-1},\\
&\sum_{|a|\le N}|\bar\p Z^au|\le\frac{\ve_1}{2}\w{x}^{-1/2}\w{t+|x|}^{\ve_2-1},\\
&\sum_{|a|\le N}|Z^au|\le\frac{\ve_1}{2}\w{t+|x|}^{\ve_2-1/2}.
\end{align*}
Together with the local existence of classical solution to the initial boundary value problem
of the hyperbolic equation, this yields
that problem \eqref{NLW} admits a global solution $u\in\bigcap\limits_{j=0}^{2N+1}C^{j}([0,\infty), H^{2N+1-j}(\cK))$.
Furthermore, \eqref{thm1:decay:a}-\eqref{thm1:decay:c} and \eqref{thm1:decay:LE} can be obtained by \eqref{loc:impr:A}, \eqref{BA1:impr}, \eqref{BA3:impr} and \eqref{BA2:impr}, respectively.
\end{proof}

\vskip 0.2 true cm

{\bf \color{blue}{Conflict of Interest Statement:}}

\vskip 0.2 true cm

{\bf The authors declare that there is no conflict of interest in relation to this article.}

\vskip 0.2 true cm
{\bf \color{blue}{Data availability statement:}}

\vskip 0.2 true cm

{\bf  Data sharing is not applicable to this article as no data sets are generated
during the current study.}

\vskip 0.2 true cm

\end{document}